\newif\iflabel
\newcommand{\Label}[1]{\iflabel\ifmmode\makebox[0pt][l]{[#1]}
	       \else\marginpar{[#1]}
	       \fi\fi\label{#1} }
\newcommand{\ra}{\rangle}
\newcommand{\la}{\langle}
\newcommand{\id}{\mathrm{id}}
\newcommand{\X}{{\mathcal X}}
\newcommand{\Q}{{\mathcal Q}}
\newcommand{\R}{{\mathcal R}}
\renewcommand{\S}{{\mathcal S}}
\newcommand{\Z}{{\mathbb Z}}
\newcommand{\N}{{\mathbb N}}
\newcommand{\End}{{\mathrm{End}}}
\newcommand{\mc}{\mathcal}
\newcommand{\im}{{\mathrm{im}}}
\newcommand{\Gap}{{\scshape Gap}}
\newcommand{\Grig}{{\mathfrak G}}
\newcommand{\ti}{\tilde}
\newcommand{\Rom}[1]{\MakeUppercase{\romannumeral #1}}
\newcommand{\mod}{\:{\mathrm{mod}}\:}
\newenvironment{proof}{\par\vskip-\lastskip\vskip\topsep
\noindent{\it Proof.}\vadjust{\nobreak}\quad
\begingroup\divide\topsep3\divide\itemsep3
\divide\partopsep3\divide\parskip3
\divide\parsep3}
{\ifvmode\penalty10000\hbox to\hsize{\hfil$\Box$}
\else\parfillskip0pt\widowpenalty10000\hfil$\Box$
\fi\par\vskip 1.5ex\endgroup}
\newenvironment{CenProof}[1]{\par\vskip-\lastskip\vskip\topsep
\noindent{\it Proof#1.}\vadjust{\nobreak}\quad
\begingroup\divide\topsep3\divide\itemsep3
\divide\partopsep3\divide\parskip3
\divide\parsep3}
{\ifvmode\penalty10000\hbox to\hsize{\hfil$\Box$}
\else\parfillskip0pt\widowpenalty10000\hfil$\Box$
\fi\par\vskip 1.5ex\endgroup}
\newtheorem{theorem}{Theorem}[section]
\newtheorem{corollary}[theorem]{Corollary}
\newtheorem{lemma}[theorem]{Lemma}
\newtheorem{proposition}[theorem]{Proposition}
\newtheorem{remark}[theorem]{Remark}
\newtheorem{definition}[theorem]{Definition}
\title{A Note on Invariantly Finitely $L$-Presented Groups}
\author{Ren\'e Hartung}
\date{April 2012}
\begin{document}
\maketitle

\begin{abstract}
  In the first part of this note, we introduce Tietze transformations
  for $L$-presentations.  These transformations enable us to generalize
  Tietze's theorem for finitely presented groups to invariantly finitely
  $L$-presented groups. Moreover, they allow us to prove that `being
  invariantly finitely $L$-presented' is an abstract property of a group
  which does not depend on the generating set.\smallskip

  In the second part of this note, we consider finitely generated normal
  subgroups of finitely presented groups. Benli proved that a finitely
  generated normal subgroup of a finitely presented group is invariantly
  finitely $L$-presented whenever its quotient is infinite cyclic. We
  generalize this result to the case where the finitely presented group
  splits over its finitely generated subgroup and to the case where the
  quotient is abelian with torsion-free rank at most two.\medskip

  \noindent{\it Keywords.} Tietze transformations; infinite presentations;
  recursive presentations; self-similar groups.\bigskip

  \noindent{\small Mathematics Subject Classification 2010: 
  20F05, 
  20E07, 
  20-04  
  }
\end{abstract}

\section{Introduction}\Label{sec:NTIntro}
\renewcommand{\thetheorem}{\Alph{theorem}}
Finite $L$-presentations are possibly infinite group presentations with
finitely many generators whose relations (up to finitely many exceptions)
are obtained by iteratively applying finitely many substitutions to a
finite set of relations; see~\cite{Bar03} or Section~\ref{sec:NTPrel}
for a definition. Various infinitely presented groups can be
described by a finite $L$-presentation. For example, the Grigorchuk
group~\cite{Gri80} and the Gupta-Sidki group~\cite{GS83} are finitely
$L$-presented~\cite{Lys85,Sid87,Bar03,BEH08}.  An $L$-presentation is
\emph{invariant} if the substitutions, which generate the relations, induce
endomorphisms of the group. In fact, invariant finite $L$-presentations
are finite presentations in the universe of \emph{groups with
operators}~\cite{Kr25,Noe29} in the sense that the operator domain of
the group generates the possibly infinitely many relations out of a
finite set of relations. The finite $L$-presentation for the Grigorchuk
group in~\cite{Lys85} is an example of an invariant finite
$L$-presentation~\cite{Gri98}.\smallskip

Finite $L$-presentations allow computer algorithms to be applied in the
investigation of the groups they define. For instance, they allow one to
compute the lower central series quotients~\cite{BEH08}, to compute the
Dwyer quotients of the group's Schur multiplier~\cite{Har10}, to develop
a coset enumerator for finite index subgroups~\cite{Har11}, and even the
Reidemeister-Schreier theorem for finitely presented groups generalizes
to finitely $L$-presented groups~\cite{Har11b}. For a survey on the
application of computers in the investigation of finitely $L$-presented
groups, we refer to~\cite{Har12b}.\smallskip

In the first part of this note, we introduce Tietze transformations for 
$L$-pre\-sen\-ta\-tions. These transformations allow us to generalize
Tietze's theorem for finitely presented groups~\cite{Tie08} to invariantly
finitely $L$-presented groups:
\begin{theorem}\Label{thm:LTietzeInv}
  Two invariant finite $L$-presentations define isomorphic groups if
  and only if it is possible to pass from one $L$-presentation to the
  other by a finite sequence of transformations.
\end{theorem}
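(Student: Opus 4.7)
\bigskip

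\noindent\emph{Proof sketch.} The plan is to follow the classical two-step structure of Tietze's theorem, but with extra care for the substitution data that accompanies an $L$-presentation. Write the two given invariant $L$-presentations as $\mathcal{L}_i=\langle X_i\mid Q_i\mid \Phi_i\mid R_i\rangle$ for $i=1,2$, and let $\iota:G_1\xrightarrow{\sim}G_2$ be the hypothesized isomorphism between the groups they define.

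The \emph{if}-direction is the routine part: for each of the finitely many Tietze moves introduced for $L$-presentations (adding or removing a redundant generator together with a defining word, adding or removing a consequence from the fixed or the iterated relation sets, and adding or removing a substitution that is expressible as a composition of the others modulo the relations), I verify directly that the induced group and its operator domain are preserved. Because the moves are defined so as to respect invariance, the group quotient and the set of iterated relations coincide before and after, and the verification reduces to checking that each elementary move is well-posed.

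For the \emph{only if}-direction, the strategy is to build a \emph{common refinement} $\mathcal{L}^\star$ that can be reached from both $\mathcal{L}_1$ and $\mathcal{L}_2$ by finite Tietze sequences. First, starting from $\mathcal{L}_1$, for each $x\in X_2$ I pick a preimage word $w_x\in F_{X_1}$ of $\iota^{-1}(x)$ and adjoin $x$ together with the defining fixed relation $x=w_x$; this is a finite sequence of generator-addition moves. Symmetrically, I express each $y\in X_1$ through $\iota$ as a word in $X_2$ and add these as further fixed relations $y=w_y$, which are consequences of the existing relations and hence can be added by fixed-relation moves. Each relation of $Q_2$ and $R_2$ is now a consequence of the present relation set and can be installed by the corresponding addition moves. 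The delicate step is to install the substitutions of $\Phi_2$: by invariance, every $\sigma\in\Phi_2$ induces an endomorphism of $G_2$, hence through $\iota$ an endomorphism of $G_1$, so a free-group lift of $\sigma$ to $F_{X_1\cup X_2}$ exists, and it differs from $\sigma$ on $X_2$ only by words that are trivial in the common group; these adjustments can be realised by finitely many relation-Tietze moves. The same construction run from $\mathcal{L}_2$ produces the same refinement $\mathcal{L}^\star$, proving the theorem.

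The main obstacle I anticipate is precisely this last step: one must \emph{extend} each substitution in $\Phi_i$ from $F_{X_i}$ to $F_{X_1\cup X_2}$ in such a way that the extension remains an invariant substitution of the enlarged $L$-presentation, and, conversely, that after the other moves have been carried out one can \emph{remove} the old substitutions in $\Phi_1$ without disturbing the iterated relation set. Here invariance is essential, because only then does one have a genuine endomorphism to extend; without invariance the free-group lifts do not descend compatibly, and the common-refinement argument breaks down. Once this substitution bookkeeping is under control, the remaining generator-elimination moves that reduce $\mathcal{L}^\star$ back down to $\mathcal{L}_2$ (respectively $\mathcal{L}_1$) are a direct adaptation of the classical Tietze elimination.
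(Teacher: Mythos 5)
Your proposal follows essentially the same route as the paper: reduce to the common-refinement argument by adjoining the other presentation's generators with defining words, adding the other relation sets as consequences, and then extending each substitution of $\Phi_2$ to the enlarged free group so that it induces the same endomorphism of the group and can be installed as a new substitution, with the symmetric construction yielding the same intermediate $L$-presentation and reversibility finishing the argument. The only cosmetic differences are that the paper first empties the fixed-relation sets (using invariance) and adjoins the new generators with \emph{iterated} rather than fixed relations, and it never needs to remove the old substitutions because the refinement reached from either side is literally the same presentation.
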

If a group admits a finite presentation with respect to one
generating set, then so it does 
with respect to any other finite generating
set~\cite[Chapter~\Rom{5}]{Har00}. This result for finitely
presented groups also generalizes to invariant finite $L$-presentations:
\def\0{Bartholdi~\cite{Bar03}}
\begin{theorem}[\0]\Label{thm:AbstProp}
  Being invariantly finitely $L$-presented is an abstract property of
  a group which does not depend on the generating set.
\end{theorem}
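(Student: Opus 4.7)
The plan is to invoke Theorem~\ref{thm:LTietzeInv}: it suffices to show that starting from any invariant finite $L$-presentation of a group $G$ on one generating set, and any other finite generating set $Y$ of $G$, one can produce an invariant finite $L$-presentation of $G$ on $Y$ by a finite sequence of $L$-Tietze transformations. Fix an invariant finite $L$-presentation $\langle X \mid Q \mid \Phi \mid R\rangle$ of $G$. Since both $X$ and $Y$ generate $G$, I would choose words $u_y$ in $X$ with $y = u_y$ in $G$ for every $y\in Y$, and words $v_x$ in $Y$ with $x = v_x$ in $G$ for every $x\in X$.

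I would then transform the presentation in two stages, mimicking the classical proof for finite presentations. In the first stage, for each $y\in Y$ in turn I would enlarge the alphabet by $y$ and add the fixed relation $y u_y^{-1}$ to the non-iterated component $Q$; this becomes an $L$-Tietze transformation once one specifies how the substitutions $\phi\in\Phi$ act on the new letter, and I would set $\phi(y) := \phi(u_y)$, which is already a word in $X$. In the second stage I would use the dual $L$-Tietze move to delete each $x\in X$: substitute $v_x$ for $x$ in every fixed relation, every iterated relation, and in every word $\phi(z)$ defining the substitutions, then drop $x$ from the alphabet together with the defining relation $x v_x^{-1}$. After finitely many such moves the alphabet is $Y$, $Q$ and $R$ remain finite, and $\Phi$ remains finite, so the resulting datum is a finite $L$-presentation of $G$ on $Y$.

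The main obstacle is to verify that invariance is preserved throughout, that is, each modified $\phi$ continues to induce an endomorphism of $G$, and the iterated relations it generates remain consequences of the defining relations. For the extension $\phi(y):=\phi(u_y)$, invariance holds because $\phi$ is already a well-defined endomorphism of $G$ and $y=u_y$ in $G$, so the extended map agrees on $G$ with the original endomorphism composed with the identification of $y$ with $u_y$. For the elimination step, the equality $x=v_x$ in $G$ means that substituting $v_x$ for $x$ commutes with applying $\phi$ modulo the defining relations. Both of these verifications are precisely the compatibility conditions imposed by the invariant $L$-Tietze transformations of Theorem~\ref{thm:LTietzeInv}, so each step is a valid invariant $L$-Tietze move and the theorem follows.
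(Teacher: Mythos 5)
Your two-stage plan (adjoin the new generators $y\in Y$ with defining words $u_y$, then eliminate the old generators $x\in X$ via $x=v_x$) is the same overall strategy as the paper, and stage one is fine: it is exactly Proposition~\ref{prop:NTAddGensFx}/\ref{prop:NTAddGensIt} with $\ti\sigma\colon y\mapsto u_y^\sigma$. The gap is in stage two, at the step ``substitute $v_x$ for $x$ \ldots in every word $\phi(z)$ defining the substitutions.'' For an $L$-presentation this is \emph{not} a free rewriting move: the substitutions are endomorphisms of the free group, and replacing $\phi(z)$ by a word that merely represents the same element of $G$ changes every iterate $r^{\phi^n}$, so the normal closure of the iterated relations can change. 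Your justification --- that $x=v_x$ in $G$ makes the substitution ``commute with $\phi$ modulo the defining relations'' --- only controls the images at the level of $G$, not the normal subgroup of $F$ generated by the iterated relations; this is precisely the gap in Bartholdi's original argument that this theorem is meant to repair. The correct tool is Proposition~\ref{prop:NTModSubst}, which allows replacing $z^{\ti\sigma}$ by $z^{\ti\sigma}r_z$ only when each $r_z$ lies in $\la\bigcup_{\varphi\in\Phi^*}\R^\varphi\ra^F$, and only at the price of adjoining the finitely many elements $r_z$ as \emph{new iterated relations}. Your proposal explicitly keeps $\Q$ and $\R$ of the same size through this step, so the resulting datum need not present $G$.

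Two consequences of this that your write-up also misses: (i) the membership condition $r_z\in\la\bigcup_{\varphi\in\Phi^*}\R^\varphi\ra^F$ forces the preliminary reduction to $\Q=\emptyset$ (via Proposition~\ref{prop:NTReplRelsInvLp}, using invariance) and requires the relations $y^{-1}u_y$ and $x^{-1}v_x$ to be carried as \emph{iterated} rather than fixed relations --- you put $yu_y^{-1}$ into the fixed component, which breaks this; (ii) the relation $x v_x^{-1}$ you use to delete $x$ is not present after stage one and must first be adjoined as an iterated relation, which again uses invariance ($x^{\ti\sigma}=_G v_x^{\ti\sigma}$) together with Proposition~\ref{prop:NTAddItRel}. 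Once the substitutions have been modified so that every generator maps into $F(Y)$ (with the extra iterated relations recorded) and all iterated relations have been rewritten over $Y$, the removal of $X$ is the legitimate reverse of Proposition~\ref{prop:NTAddGensIt}; without those repairs the elimination step is not a valid $L$-Tietze move.
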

Our proof of Theorem~\ref{thm:AbstProp} fills a gap in the proof
of~\cite[Proposition~2.2]{Bar03} because the transformations in the latter
proof are not sufficient; see Section~\ref{sec:NTAppsTie} below.\smallskip

In the second part of this note, in Section~\ref{sec:FGNorOfFP}, we
consider finitely generated normal subgroups of finitely presented
groups. By Higman's embedding theorem, every finitely generated group
embeds into a finitely presented group if and only if it is recursively
presented~\cite{Hig61}. Since every finite $L$-presentation is recursive,
finitely $L$-presented groups therefore embed into finitely presented
groups. As indicated in~\cite{Ben11}, we prove that every group which
admits an invariant finite $L$-presentation, where each substitution
induces an automorphism of the group, embeds as a \emph{normal}
subgroup into a finitely presented group. On the other hand,
the Reidemeister-Schreier theorem for finitely $L$-presented groups
in~\cite{Har11b} shows that every normal subgroup of a finitely presented
group admits an invariant $L$-presentation where each substitution
induces an automorphism of the group; the obtained $L$-presentation is
finite if and only if the normal subgroup has finite index.\smallskip

Finitely generated \emph{normal} subgroups of finitely presented groups
with infinite index were considered in~\cite{Ben11}: It was proved that
a finitely generated normal subgroup of a finitely presented group
is invariantly finitely $L$-presented if its quotient is infinite
cyclic. Moreover, in~\cite[Remark~(2)]{Ben11}, Benli asked for a
generalization of his latter result and he posed the following problem:
\begin{quote}
  {\it
  Is it true that a finitely generated group embeds as a normal subgroup
  into a finitely presented group if and only if it admits an invariant
  finite $L$-presentation where each substitution induces an automorphism
  of the group?}
\end{quote}
We generalize Benli's constructions from~\cite{Ben11} in order to prove the
following 
\begin{theorem}\Label{thm:GSplits}
  Every finitely generated normal subgroup of a finitely presented group
  is invariantly finitely $L$-presented if the group splits over
  its subgroup.
\end{theorem}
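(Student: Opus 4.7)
The plan is to exploit the split $G = N \rtimes Q$. Because $G$ is finitely presented and $N$ is finitely generated and normal, the quotient $Q := G/N$ is itself finitely presented; I fix finite generating sets $X_N = \{n_1,\ldots,n_m\}$ of $N$ and $X_Q = \{q_1,\ldots,q_k\}$ of a complement isomorphic to $Q$. Conjugation by each $q_j^{\pm 1}$ is an automorphism of $N$, encoded by words $w_{ij}, v_{ij} \in F(X_N)$ with $q_j n_i q_j^{-1} = w_{ij}$ and $q_j^{-1} n_i q_j = v_{ij}$ in $G$; these lift to endomorphisms $\tilde\sigma_j, \tilde\sigma_j^{-1}$ of $F(X_N)$, which I take as the substitutions $\Phi := \{\tilde\sigma_j^{\pm 1} : 1 \le j \le k\}$ of the prospective $L$-presentation.

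First I would use Tietze transformations to bring a finite presentation of $G$ into the normal form
$$G = \la X_N \cup X_Q \;\mid\; S_0,\, T_0,\, C\ra,$$
where $S_0 \subset F(X_N)$ and $T_0 \subset F(X_Q)$ are finite, $T_0$ is a finite presentation of $Q$, and $C$ is the finite set of conjugation relators. This is standard: adjoin $C$ (redundant because $N \triangleleft G$), rewrite each original relator modulo $C$ as a product $u \cdot v$ with $u \in F(X_N)$ and $v \in F(X_Q)$, and split the pair into two relators; the semidirect decomposition forces $u$ and $v$ to vanish separately in $G$, and projecting by $X_N \mapsto 1$ shows that the collected $v$-parts form a presentation $T_0$ of $Q$. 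Next I would augment $S_0$ by the two finite ``correction'' sets
$$S_1 := \{\tilde\sigma_j(v_{ij})\, n_i^{-1},\; \tilde\sigma_j^{-1}(w_{ij})\, n_i^{-1}\}_{i,j}, \qquad S_2 := \{\tilde\sigma_t(n_i)\, n_i^{-1} : t \in T_0,\; 1 \le i \le m\},$$
where $\tilde\sigma_t$ denotes the composition of $\tilde\sigma_j^{\pm 1}$ read off the letters of $t$. Setting $S := S_0 \cup S_1 \cup S_2$, the candidate $L$-presentation is
$$N' \;:=\; \la X_N \;\mid\; \emptyset;\; \Phi;\; S\ra,$$
and the claim to establish is $N \cong N'$.

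This is clearly an invariant finite $L$-presentation: every $\phi \in \Phi$ sends $\bigcup_{w \in \Phi^*} w(S)$ into itself, so preserves its normal closure in $F(X_N)$ and induces an endomorphism of $N'$. To identify $N'$ with $N$, I would form the abstract semidirect product $\tilde G := N' \rtimes Q$. Here $S_1$ makes $\tilde\sigma_j$ and $\tilde\sigma_j^{-1}$ induce mutually inverse automorphisms of $N'$, while $S_2$, combined with the fact that an automorphism of $N'$ is determined by its values on the generators $X_N$, forces each relator $t \in T_0$ to act trivially on $N'$; hence $Q$ really acts on $N'$ and $\tilde G$ is well-defined. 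The natural presentation of $\tilde G$ is $\la X_N \cup X_Q \mid \bigcup_{w \in \Phi^*} w(S), T_0, C\ra$: all its relators are consequences of $(S_0, T_0, C)$ in $G$, while conversely $S_0 \subset S$, so $\tilde G \cong G$, and under this isomorphism $N' \le \tilde G$ maps to the kernel of $G \twoheadrightarrow Q$, namely $N$. The main obstacle I anticipate is precisely the construction of $S$: it must be rich enough that $\Phi$ yields automorphisms of $N'$ and $Q$ really acts on $N'$, yet lean enough that no extra collapse occurs; the finitely many correction relators in $S_1$ and $S_2$ are exactly what encodes the identities $\sigma_j \sigma_j^{-1} = \mathrm{id}$ and $\sigma_t = \mathrm{id}$ on the generators, and it is this finiteness that keeps the resulting $L$-presentation finite.
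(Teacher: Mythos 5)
Your proposal is correct, but it reaches the theorem by a genuinely different route from the paper's. The paper never forms the semidirect product explicitly: it starts from the Reidemeister--Schreier $L$-presentation of the subgroup on the \emph{infinite} alphabet of Schreier generators $\mathcal Y$ (Lemma~\ref{lem:FirstObserv}), builds an epimorphism $\gamma\colon F(\mathcal Y)\to F(\mathcal Z)$ onto the free group on the finite generating set, and then shows that the normal subgroup $N$ of ``correction relators'' arising when the substitutions are transported along $\gamma$ equals the normal closure of the $\ti\Phi^*$-orbit of the finitely many relators $a_\ell^{-\ti\delta_{U_i}}a_\ell^{\ti\delta_{V_i}}$ read off a finite \emph{monoid} presentation of $G/H$; Lemma~\ref{lem:FZNFac} then concludes. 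Your finite $L$-presentation is essentially the same as the paper's: your $\Phi$ is the paper's $\{\delta_s\}_{s\in\S}$, and your $S_1\cup S_2$ is the paper's monoid-presentation correction set (the monoid relations of a group presentation of $Q$ are exactly the free cancellations $q_jq_j^{-1}=q_j^{-1}q_j=1$, i.e.\ $S_1$, together with the defining relators $t=1$, i.e.\ $S_2$). What differs is the verification: you rebuild $G$ as $N'\rtimes Q$ and identify $N'$ with $\ker(G\twoheadrightarrow Q)$ by checking that $\bigcup_{w\in\Phi^*}w(S)\cup T_0\cup C$ and $S_0\cup T_0\cup C$ have the same normal closure in $F(X_N\cup X_Q)$ --- which holds because each $\ti\sigma_j^{\pm1}$ agrees with conjugation by $q_j^{\pm1}$ modulo $\langle C\rangle^F$ --- and this bypasses the infinite Schreier alphabet entirely. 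The two delicate points in your argument are both handled: $S_1$ and $S_2$, together with the fact that a homomorphism is determined on generators, make the $F(X_Q)$-action on $N'$ descend to a genuine $Q$-action so that $\widetilde G$ is well defined; and $S_1,S_2\subseteq\langle T_0\cup C\rangle^F$ so that no extra collapse is introduced. What the paper's heavier machinery buys is reusability: the diagram with $\gamma$, $\chi$, $\psi$ and the normal subgroup $N$ is set up so that it also carries the non-split case $G/H\cong\Z\times\Z$ of Theorem~\ref{thm:Rank2}, where no complement exists and your semidirect-product shortcut is unavailable.
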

Since $G$ splits over its subgroup $H \unlhd G$
if $G / H$ is a free group, Benli's result in~\cite{Ben11} is a
consequence of Theorem~\ref{thm:GSplits}. Moreover, our generalizations
of the constructions from~\cite{Ben11} allows us to prove
\begin{theorem}\Label{thm:TorRk2}
  Every finitely generated normal subgroup of a finitely presented
  group is invariantly finitely $L$-presented whenever the quotient is
  abelian with torsion-free rank at most two.
\end{theorem}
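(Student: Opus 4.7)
The plan is to argue by cases according to the torsion-free rank $k\in\{0,1,2\}$ of $G/H \cong \Z^k \times T$, where $T$ denotes the finite torsion part. The case $k=0$ is immediate, since then $H$ is a finite-index subgroup of the finitely presented group $G$ and is itself finitely presented by the classical Reidemeister-Schreier theorem. For $k \ge 1$, I would reduce to the case in which the quotient is free abelian: choose a direct summand $M \cong \Z^k$ of $G/H$ complementary to $T$ and let $K$ be its preimage in $G$. Then $H \le K \le G$, $K$ has finite index $|T|$ in $G$ (hence is finitely presented), and $H \unlhd K$ with $K/H \cong \Z^k$. It therefore suffices to prove the theorem under the extra assumption that $G/H$ is free abelian of rank $k\le 2$, working with $K$ in place of $G$ from here on.

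If $k=1$, this reduced statement is exactly Benli's theorem~\cite{Ben11}, so only $k=2$ requires new work. Fix a finite presentation $K = \la X \cup \{t_1, t_2\} \mid R\ra$ in which $X$ generates $H$ and the images of $t_1, t_2$ form a basis of $K/H \cong \Z^2$. Partition $R$ into the subset $R_H \subset F_X$ of relators that already lie in the free group on $X$, and the remaining relators, which can be taken to express (a) each conjugate $t_i^{-1} x t_i$ for $x \in X$ as a word $\sigma_i(x) \in F_X$, and (b) the commutator $[t_1, t_2]$ as a word $c \in F_X$ (both well-defined because their images in $K$ lie in $H$). The two maps $\sigma_i$ extend to endomorphisms of $F_X$ that induce automorphisms of $H$, namely conjugation by $t_i$. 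The proposed invariant finite $L$-presentation of $H$ uses generators $X$, substitutions $\{\sigma_1, \sigma_2\}$, fixed relators $R_H \cup \{c\}$, and iterated relators $\bigcup_{w \in \{\sigma_1, \sigma_2\}^*} w(R_H)$.

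The main obstacle is that $\Z^2$ is not free: the free monoid on $\{\sigma_1, \sigma_2\}$ produces both $\sigma_1\sigma_2(r)$ and $\sigma_2\sigma_1(r)$ for each $r \in R_H$, and these need not coincide as words in $F_X$. The key observation is that the fixed relator $c$, representing $[t_1, t_2] \in H$, is precisely what reconciles them: the endomorphisms $\sigma_1\sigma_2$ and $\sigma_2\sigma_1$ differ on $H$ by the inner automorphism by $c$, so the discrepancy between the two iteration orders is a consequence of $c$ together with the iterated $R_H$-relators. Granting this compatibility, the argument that the listed relators normally generate the kernel of $F_X \twoheadrightarrow H$ is a Reidemeister-Schreier-style rewriting computation applied to the finite presentation $R$ of $K$, entirely parallel to the split case handled in Theorem~\ref{thm:GSplits}; the only genuinely new ingredient is the introduction of the fixed relator $c$ needed to absorb the non-freeness of the quotient $\Z^2$.
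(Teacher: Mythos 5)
Your reduction to the free abelian case is correct and is in fact a slightly different (and arguably cleaner) route than the paper's: you pass to the preimage $K$ of a free complement of the torsion subgroup, so that $K$ is a finite-index --- hence finitely presented --- overgroup with $H\unlhd K$ and $K/H\cong\Z^k$, whereas the paper passes to the preimage $U$ of the torsion subgroup, proves the statement for $U\unlhd G$ with $G/U\cong\Z^\ell$, and then descends to the finite-index subgroup $H\unlhd U$ via the Reidemeister--Schreier theorem for finitely $L$-presented groups. Either reduction is fine, and the cases $k=0$ and $k=1$ are fine. The problem is the rank-two case, which contains both a concrete error and a gap precisely at the point where the real work lies.

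The error: you place the word $c\in F_X$ with $[t_1,t_2]=_K c$ among the \emph{fixed relators}, which imposes $c=1$ in the presented group. But $[t_1,t_2]$ is merely an element of $H$, not a relator of $H$. Take $K$ to be the discrete Heisenberg group $\la x,t_1,t_2\mid [t_1,t_2]x^{-1},\,[x,t_1],\,[x,t_2]\ra$ with $H=\la x\ra\cong\Z$ and $K/H\cong\Z^2$: then $R_H=\emptyset$, $\sigma_1=\sigma_2=\id$, $c=x$, and your presentation defines the trivial group instead of $\Z$. What the commutator relator of $K$ actually contributes after Reidemeister--Schreier rewriting is not ``$c=1$'' but a family of \emph{iterated} relators asserting that $\sigma_2\sigma_1(y)$ and $\sigma_1\sigma_2(y)$ agree up to conjugation by the appropriate image of $c$; these are the relators collected in the set ${\mc V}$ in the paper's proof of Theorem~\ref{thm:Rank2}. (Separately, you need substitutions for conjugation by $t_i^{-1}$ as well as by $t_i$: the $\sigma_i$ are automorphisms of $H$ but only endomorphisms of $F_X$, so the monoid $\{\sigma_1,\sigma_2\}^*$ reaches only one ``quadrant'' of the conjugates $t_1^it_2^jrt_2^{-j}t_1^{-i}$.)

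The gap: ``granting this compatibility, the argument \dots\ is entirely parallel to the split case'' concedes exactly the step that cannot be conceded. In the split case the extra Schreier generators lie in the complement and are trivial in $H$, so they are simply deleted. Here the extra Schreier generators $t_{i,j}$ are the transversal-conjugates of the commutator --- nontrivial elements of $H$ --- and eliminating them introduces infinitely many relators whose finite iterated generation is the entire content of the theorem. The paper proves this by showing that these conjugates form a \emph{basis} of the free group $S\cap E\!K$, which is what makes the rewriting map $\gamma$ well defined and lets the finitely many commutation relators in ${\mc V}$ generate everything; Remark~\ref{rem:ZxZxZ} shows that the analogous basis statement already fails for $\Z^3$. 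If the rank-two case really were parallel to the split case, the same argument would handle $\Z^3$, which it does not. The compatibility you propose to grant is the theorem, not a routine verification.
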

Our constructions do not generalize further; see
Remark~\ref{rem:NTNonInvLp}.

\renewcommand{\thetheorem}{\arabic{section}.\arabic{theorem}}
\setcounter{theorem}{0}

\section{Preliminaries}\Label{sec:NTPrel}
In this section, we recall the notion of an invariant
finite $L$-presentation as introduced in~\cite{Bar03}.
An \emph{$L$-presentation} is a group presentation of the form
\begin{equation}
  \Big\la \X\,\Big|\, \Q \cup \bigcup_{\sigma\in\Phi^*} \R^\sigma \Big\ra,
  \Label{eqn:NTLpres}
\end{equation}
where $\X$ is an alphabet, $\Q$ and $\R$ are subsets of the free group
$F = F(\X)$ over the alphabet $\X$, and $\Phi^* \subseteq \End(F)$
denotes the monoid of endomorphisms that is generated by $\Phi$. If
the \emph{generators} $\X$, the \emph{fixed relations} $\Q$, the
\emph{substitutions} $\Phi$, and the \emph{iterated relations} $\R$ have
finite cardinality, the $L$-presentation in Eq.~(\ref{eqn:NTLpres})
is a \emph{finite $L$-presentation}. We also write $\la \X\mid\Q
\mid\Phi\mid\R\ra$ for the $L$-presentation in Eq.~(\ref{eqn:NTLpres}) and
$G = \la \X\mid\Q\mid\Phi\mid\R\ra$ for the group it defines.\smallskip

A group which admits a finite $L$-presentation is \emph{finitely
$L$-presented}. An $L$-pre\-sen\-ta\-tion of the form
$\la\X\mid\emptyset\mid\Phi\mid\R\ra$ is \emph{ascending} and an
$L$-presentation \mbox{$\la\X\mid\Q\mid\Phi\mid\R\ra$} is called
\emph{invariant} (and the group it defines is \emph{invariantly
$L$-pre\-sented}), if each substitution $\varphi \in \Phi$ induces
an endomorphism of the group; i.e., if the normal subgroup $\la
\Q \cup \bigcup_{\sigma\in\Phi^*} \R^\sigma \ra^F \unlhd F$ is
$\varphi$-invariant. Each ascending $L$-presentation is invariant
and each invariant $L$-presentation $\la \X\mid\Q\mid\Phi\mid\R\ra$
admits an ascending $L$-presentation \mbox{$\la \X\mid\emptyset\mid
\Phi\mid\Q\cup\R\ra$} which defines the same group; see
Proposition~\ref{prop:NTReplRelsInvLp}. Even though invariant and
ascending $L$-presentations are essentially the same, we like to
distinguish between these two objects. The finite $L$-presentation
in~\cite{Lys85} for the group constructed by Grigorchuk~\cite{Gri80}
is not ascending but it is easy to see that it is an invariant
$L$-presentation; see, for instance,~\cite[Corollary~4]{Gri98}.
\begin{remark}\Label{rem:NTNonInvLp}
  There are finite $L$-presentations that are not invariant.
\end{remark}
\begin{proof}
  The free product $\Z_2 * \Z_2 = \la\{a,b\}\mid\{a^2, b^2\}\ra$ is
  finitely $L$-presented by\linebreak $\la \{a,b\} \mid \{a^2\} \mid \{\sigma\}
  \mid \{b^2\} \ra$ where $\sigma$ is induced by the map $a \mapsto ab$
  and $b \mapsto b^2$. If this $L$-presentation were invariant, the
  ascending $L$-presentation \mbox{$\la \{a,b\}\mid \emptyset\mid
  \{\sigma\} \mid \{a^2,b^2\}\ra$} would also define $\Z_2 *
  \Z_2$; see Proposition~\ref{prop:NTReplRelsInvLp}. In this case
  $(a^2)^\sigma = abab$ is a relation in the group and, since $a^2 =
  b^2 = 1$ holds, the generators $a$ and $b$ commute. Therefore the
  ascending $L$-presentation defines a quotient of the $2$-elementary
  abelian group $\Z_2 \times \Z_2$. In fact, it defines a finite
  group. Thus $\la\{a,b\}\mid\emptyset\mid\{\sigma\}\mid\{a^2,b^2\}\ra$
  is not a finite $L$-presentation for $\Z_2 * \Z_2$ and hence
  $\la\{a,b\}\mid\{a^2\}\mid\{\sigma^2\}\mid\{b^2\}\ra$ is not an
  invariant $L$-presentation.
\end{proof}
Note that this latter proof from~\cite{Har11b} provides
a `method' to prove that a finite $L$-presentation
$\la\X\mid\Q\mid\Phi\mid\R\ra$ is invariant; namely, if the ascending
$L$-presentation $\la\X\mid\emptyset\mid\Phi\mid\R\cup\Q\ra$ defines a
group which is isomorphic to the first. In general, we are not aware of a
method which allows us to decide whether or not a finite $L$-presentation
is invariant --- even if we assume that the $L$-presented group has a
solvable word problem.\smallskip

Invariant finite $L$-presentations are `natural' generalizations
of finite presentations because every finitely presented
group $\la\X\mid\R\ra$ is invariantly finitely $L$-presented
by $\la\X\mid\emptyset\mid\emptyset\mid\R\ra$. However, invariant
finite $L$-presentations have been used to describe various examples of
self-similar groups that are not finitely presented~\cite{Lys85,BS10}. For
instance, the group $\Grig$ constructed by Grigorchuk in~\cite{Gri80}
is not finitely presented~\cite{Gri99} but it is invariantly finitely
$L$-presented, see also~\cite{Gri98}:
\def\0{Lys{\"e}nok~\cite{Lys85}}
\begin{theorem}[\0]\Label{thm:Lyseniok}
  The Grigorchuk group is invariantly finitely $L$-pre\-sented by
  $\left\la \{a,b,c,d\} \mid \{a^2,b^2,c^2,d^2,bcd\} \mid
        \{\sigma\}\mid \{ (ad)^4,(adacac)^4 \} \right\ra$
  where $\sigma$ denotes the endomorphism of the free group over
  $\{a,b,c,d\}$ that is induced by the map $a\mapsto aca$, $b\mapsto d$,
  $c\mapsto b$, and $d\mapsto c$.
\end{theorem}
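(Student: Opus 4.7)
The plan is to establish the isomorphism in two stages. Let $\ti\Grig$ denote the group defined by the given $L$-presentation. First, I would construct a surjection $\pi : \ti\Grig \twoheadrightarrow \Grig$; second, I would show $\pi$ is injective by exploiting the self-similar structure of $\Grig$.

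For the first stage, I take $\Grig$ in its standard realization as a subgroup of $\Aut(T_2)$ acting on the infinite binary rooted tree $T_2$. The fixed relations $a^2, b^2, c^2, d^2, bcd$ are immediate from the definitions of $a,b,c,d$, and the two seed iterated relations $(ad)^4$ and $(adacac)^4$ vanish in $\Grig$ by a direct computation at the first few levels combined with the recursive definitions of $b,c,d$. To promote this to all iterated relations, I would show that $\sigma$ induces an endomorphism of $\Grig$: the images $\sigma(a),\sigma(b),\sigma(c),\sigma(d)$ all lie in the first-level stabilizer $\mathrm{St}_{\Grig}(1)$, and composing with the wreath embedding $\psi: \mathrm{St}_{\Grig}(1) \hookrightarrow \Grig \times \Grig$ arising from self-similarity shows that $\sigma$ descends to a well-defined endomorphism. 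Consequently every iterate $\sigma^n(r)$ with $r \in \{(ad)^4,(adacac)^4\}$ is trivial in $\Grig$, yielding the surjection $\pi$.

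For the second stage, I would analyze the preimage $\mathrm{St}_{\ti\Grig}(1) = \pi^{-1}(\mathrm{St}_\Grig(1))$, which has index two in $\ti\Grig$ and is generated by $b, c, d, aba, aca, ada$. I would then construct a homomorphism $\ti\psi : \mathrm{St}_{\ti\Grig}(1) \to \ti\Grig \times \ti\Grig$ lifting $\psi$, by prescribing images of the six generators in terms of $\sigma$-iterates of $a,b,c,d$ and verifying via a Reidemeister-Schreier rewrite that all induced relators on $\mathrm{St}_{\ti\Grig}(1)$ are killed. With $\ti\psi$ fitting into a commutative square with $\psi$ and $\pi$, any element $w \in \ker\pi$ lying in $\mathrm{St}_{\ti\Grig}(1)$ projects under $\ti\psi$ to a pair of elements of $\ker\pi$ of strictly smaller word-length; induction on word-length reduces $w$ to a short word, which is trivial in $\ti\Grig$ by a finite check.

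The main obstacle is the compatibility of $\ti\psi$ with the defining relators of $\ti\Grig$. This is precisely where the particular choice of $(ad)^4$ and $(adacac)^4$ as seed relations becomes essential: they are the exact obstructions --- beyond the fixed relations and the wreath recursion itself --- to $\ti\psi$ being well-defined, and the $\sigma$-invariance of the relator normal closure ensures that the contraction argument propagates without accumulating unaccounted relators at higher levels. Once this compatibility is checked at level one, the self-similar structure propagates injectivity to all levels.
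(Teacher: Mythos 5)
A preliminary remark on the comparison you were asked for: the paper offers no proof of this theorem. It is quoted as Lys{\"e}nok's result, with the invariance attributed to \cite[Corollary~4]{Gri98}, and is used in the sequel only as a black box. So your proposal has to be judged on its own terms, and on those terms it follows the standard two-stage architecture (surjection onto $\Grig$, then a level-one contraction for injectivity) --- correct in outline, but with the two load-bearing steps asserted rather than proved.

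Concretely: (1) In stage one you derive the vanishing of all iterates $\sigma^n(r)$ from the claim that $\sigma$ descends to an endomorphism of $\Grig$, justified only by ``the images lie in $\mathrm{St}_{\Grig}(1)$ and composing with $\psi$ shows $\sigma$ descends.'' That is not an argument: writing $\psi(\sigma(w)) = (\theta(w),\, w)$ with $\theta\colon a\mapsto d$, $b\mapsto 1$, $c\mapsto a$, $d\mapsto a$, the second coordinate is harmless, but to conclude $w =_{\Grig} 1 \Rightarrow \sigma(w) =_{\Grig} 1$ you must know that $\theta$ kills relators, which is not given. The repair is easy and avoids endomorphy entirely: $\theta((ad)^4) = (da)^4$ is conjugate to $(ad)^4$ and $\theta((adacac)^4) = (da)^{12}$, so both coordinates of $\psi(\sigma^{n}(r))$ are again known relators and induction on $n$ gives the surjection. (2) The genuine gap is in stage two. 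Your induction needs, at each step, the implication $\ti\psi(w) = (1,1) \Rightarrow w = 1$ in $\ti\Grig$ for $w \in \mathrm{St}_{\ti\Grig}(1)$; without it, knowing that the two components lie in $\ker\pi$ and are trivial by the inductive hypothesis tells you nothing about $w$ itself. This injectivity of $\ti\psi$ is exactly the assertion that the Reidemeister--Schreier rewrites of $a^2,\ldots,bcd,(ad)^4,(adacac)^4$ and their $\sigma$-iterates normally generate the \emph{whole} kernel of $F(b,c,d,aba,aca,ada) \to \Grig\times\Grig$ --- that is, it is essentially the theorem --- and your text only postulates it (``they are the exact obstructions''). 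Identifying that kernel is the substantial computation in Lys{\"e}nok's proof, and it is absent here. Two smaller points: the contraction gives $|w_i| \le (|w|+1)/2$, which is not strictly decreasing for short words, so the base case is a nontrivial finite verification; and the invariance claim in the statement deserves its own (easy) check --- $\sigma$ permutes $b^2,c^2,d^2$, sends $bcd$ to its cyclic conjugate $dbc$ and $a^2$ to $(aca)^2$, all of which lie in the normal closure of the fixed relators, so invariance follows once correctness of the presentation is established.
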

It is easy to see (and it follows with our Tietze transformations
below) that the group $\Grig$ is also invariantly finitely
$L$-presented by
\begin{equation}\Label{eqn:GrigACD}
  \Grig\cong \left\la \{a,c,d\} \mid \{a^2,c^2,d^2,(cd)^2\} \mid
        \{\ti\sigma\}\mid \{ (ad)^4,(adacac)^4 \} \right\ra,
\end{equation}
where $\ti\sigma$ is induced by the map $a\mapsto aca$, $c\mapsto cd$,
and $d\mapsto c$. Further examples of invariantly finitely $L$-presented 
groups arise, for instance, as certain wreath-products: In
contrast to~\cite{Bar03}, Bartholdi noticed that the lamplighter group
$\Z_2 \wr \Z$ is invariantly finitely $L$-presented by
\[
  \left\la \{a,t\}~\middle|~ \emptyset~\middle|~ \{ \delta \}~\middle|~
  \{a^2, [a,a^t] \}\right\ra,
\]
where $\delta$ is induced by the map $a \mapsto a^ta$ and $t \mapsto t$.
This recent result generalizes to wreath products of the form $H \wr \Z$,
where $H$ is a finitely generated abelian group:
\begin{proposition}\Label{prop:NTFGAbWrZ}
  If $H$ is a finitely generated abelian group, the wreath product 
  $H \wr \Z$ is invariantly finitely $L$-presented.
\end{proposition}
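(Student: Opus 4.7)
The plan is to generalise Bartholdi's $L$-presentation of $\Z_2\wr\Z$ (cited just above the statement) to an arbitrary finitely generated abelian base. Decompose $H\cong\Z^r\oplus\Z/n_1\oplus\cdots\oplus\Z/n_s$ with generators $h_1,\ldots,h_k$, where $k=r+s$, and let $\R_H$ consist of the commutators $[h_i,h_j]$ for $i<j$ together with the torsion relations $h_i^{n_{i-r}}$ for $r<i\le k$. The $L$-presentation I propose is
\[
 \la \{h_1,\ldots,h_k,t\}\mid\emptyset\mid\{\delta\}\mid
 \R_H\cup\{[h_i,h_j^t]:1\le i,j\le k\}\ra,
\]
where $\delta$ is the endomorphism of the free group given by $t\mapsto t$ and $h_i\mapsto h_i^t h_i$ for all $i$. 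Being ascending, it is automatically invariant by the discussion preceding Theorem~\ref{thm:Lyseniok}.

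First I would check that $H\wr\Z$ satisfies the iterated relations: $\R_H$ hold in the copy of $H$ at position $0$, the commutators $[h_i,h_j^t]$ are trivial because $\bigoplus_{n\in\Z}H$ is abelian, and $\delta$ descends to a well-defined endomorphism of $H\wr\Z$, hence preserves every relation. The converse direction---that the normal closure of the iterated relations equals the kernel of the surjection $F(\{h_1,\ldots,h_k,t\})\twoheadrightarrow H\wr\Z$---reduces to showing that this normal closure contains every commutator $[h_i,h_j^{t^m}]$ for $m\ge 1$, since those together with $\R_H$ normally generate the kernel.

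The crux is an induction on $n\ge 1$ with claim
\[
 \delta^{n-1}([h_i,h_j^t])\equiv[h_i,h_j^{t^n}]
\]
modulo the normal closure of $\R_H$ and of the relations produced by $\delta^0,\ldots,\delta^{n-2}$. The inductive hypothesis says that all commutators $[h_a^{t^u},h_b^{t^v}]$ with $|u-v|\le n-1$ are already trivial in this quotient. Pascal's rule applied to the recursion $\delta(h_i^{t^c})=h_i^{t^{c+1}}h_i^{t^c}$ yields $\delta^{n-1}(h_i)\equiv\prod_{c=0}^{n-1}(h_i^{t^c})^{\binom{n-1}{c}}$ modulo commuting rearrangements, and analogously $\delta^{n-1}(h_j^t)\equiv\prod_{c=1}^{n}(h_j^{t^c})^{\binom{n-1}{c-1}}$. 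Among the letters occurring in these two words the only pair with superscript difference exceeding $n-1$---hence the only pair not yet known to commute---is $(h_i,h_j^{t^n})$, each appearing with multiplicity $\binom{n-1}{0}=\binom{n-1}{n-1}=1$. Expanding the commutator in the two-step nilpotent subgroup they generate therefore produces exactly $[h_i,h_j^{t^n}]^{1\cdot 1}=[h_i,h_j^{t^n}]$.

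The main obstacle is justifying that the relevant subgroup of the quotient is actually two-step nilpotent, i.e.\ that the surviving commutator $c=[h_i,h_j^{t^n}]$ is central there. This holds because every letter in sight has superscript within $n-1$ of both $h_i$ and $h_j^{t^n}$, and so commutes with each by the inductive hypothesis, and hence with $c$. Once the key identity is established, conjugating by powers of $t$ produces every commutator $[h_i^{t^a},h_j^{t^b}]$ with $a\ne b$, which together with $\R_H$ is exactly the standard defining set of relations of $H\wr\Z$, completing the proof.
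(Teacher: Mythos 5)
Your construction is correct in outline, but it differs from the paper's in a way that makes the verification substantially harder. The paper also generalises Bartholdi's lamplighter presentation, but it introduces \emph{one substitution per generator}: $\sigma_y\colon y\mapsto y^ty$ fixing $t$ and every other generator of $H$. With that choice the whole induction collapses to exact one-step commutator identities in the free group, e.g.
\[
  [x,y^{t^{n}}]^{\sigma_y}=[x,y^{t^{n+1}}y^{t^{n}}]
  =[x,y^{t^{n}}]\cdot[x,y^{t^{n+1}}]^{y^{t^{n}}},
\]
so that $[x,y^{t^{n+1}}]$ lies in the normal closure of the iterated images as soon as $[x,y^{t^{n}}]$ does --- no binomial coefficients and no passage to a nilpotent quotient are needed. (A side effect: the identity $[y,x^{t}]^{\sigma_y}=[y,x]^{ty}[y,x^{t}]$ even recovers the commutators $[x,y]$, so the paper can omit them from the iterated relations.) Your single diagonal substitution $\delta$ buys an $L$-presentation with only one substitution, at the price of the Pascal-triangle bookkeeping.

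Within your argument one step is wrong as stated, though harmlessly so. You justify the exact identity $\delta^{n-1}([h_i,h_j^t])\equiv[h_i,h_j^{t^n}]$ by claiming that $c=[h_i,h_j^{t^n}]$ is central in the subgroup generated by the letters in sight, ``because every letter has superscript within $n-1$ of both $h_i$ and $h_j^{t^n}$.'' But the letters $h_i$ and $h_j^{t^n}$ are themselves in sight, their superscripts differ by exactly $n$, and nothing in the intermediate quotient forces either of them to commute with $c$; so two-step nilpotency is not available. Fortunately you do not need the exact identity: repeatedly applying $[AB,C]=[A,C]^B[B,C]$ and $[A,BC]=[A,C][A,B]^C$ writes $\delta^{n-1}([h_i,h_j^t])$ as a product of conjugates of elementary commutators $[h_i^{t^u},h_j^{t^v}]$, all of which lie in the inductively available normal closure except a \emph{single conjugate} of $[h_i,h_j^{t^n}]$. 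Since only normal closures matter for the comparison of the two relation sets, this already yields $[h_i,h_j^{t^n}]\in R$, and the rest of your proof goes through unchanged.
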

\begin{proof}
  Since $H$ is finitely generated and abelian, it decomposes into a
  direct product of cyclic groups; i.e., $H$ has the form $\Z_{r_1}
  \times \cdots \times \Z_{r_n}$ for $r_1,\ldots,r_n\in\N \cup \{\infty\}$
  where $\Z_\infty$ denotes the infinite cyclic group while $\Z_{r_i}$
  denotes the cyclic group of order $r_i$, otherwise. Then $\la\X\mid \{
  [x,y] \mid x,y\in\X\} \cup \{ x^{r_x} \mid r_x < \infty \} \ra$ is a
  finite presentation for $H$. The wreath product $H\wr \Z$ 
  admits the presentation
  \[
    H \wr \Z \cong 
    \left\la \X \cup \{t\}~\middle|~ \{ [x,y], x^{r_x} \}_{x,y\in\X,r_x<\infty} \cup \{ [x,y^{t^i}]  \}_{
    x,y\in \X, i\in\N_0 } \right\ra.
  \]
  For each $y\in\X$, define a substitution $\sigma_y$ which is induced 
  by the map
  \[
    \sigma_y \colon \left\{\begin{array}{rcll}
      y &\mapsto& y^t\,y,\\
      x &\mapsto& x, &\textrm{for each } x \in \X \setminus \{y\},\\
      t &\mapsto& t.
    \end{array}\right.
  \]
  For $n \in \N$ and $x,y,z\in \X$ with $x\neq y$ and $z \neq y$, we obtain
  \begin{eqnarray*}
     [y,x^{t^{n}}]^{\sigma_y}         &=& [y^ty,x^{t^n}] = [y,x^{t^{n-1}}]^{ty}\cdot [y,x^{t^n}],\\ \relax
     [x,y^{t^{n}}]^{\sigma_y} &=& [x,y^{t^{n+1}}\,y^{t^{n}}] = [x,y^{t^{n}}]\cdot [x,y^{t^{n+1}}]^{y^{t^{n}}}, \\ \relax
     [x,z^{t^{n}}]^{\sigma_y} &=& [x,z^{t^{n}}], \\ \relax
     [y,y^{t^{n}}]^{\sigma_y} 
     &=& [ y,y^{t^{n-1}}]^{ty}\cdot [y,y^{t^{n}}]^{ty^{t^{n}}y} \cdot [y,y^{t^{n}} ]\cdot [y,y^{t^{n+1}}]^{y^{t^{n}}}.
  \end{eqnarray*}
  This shows that the relations $\{ [x,y^{t^i}] \mid x,y\in\X,
  i\in\N\}$ are consequences of the iterated images
  $\{[x,y^t]^{\delta}\mid \delta \in \{\sigma_y \mid y \in \X\}^*, x,y \in \X
  \}$ and vice versa. Moreover, for each relation $x^{r_x}$ of $H$'s
  finite presentation, we have that $(x^{r_x})^{\sigma_y} = x^{r_x}$
  if $x \neq y$ and $(y^{r_y})^{\sigma_y} = (y^ty)^{r_y} =_{H \wr \Z}
  (y^{r_y})^t\,y^{r_y}$, otherwise. Thus these images are relations of the
  wreath product $H \wr \Z$. In particular, the finite $L$-presentation
  \[
    \left\la \X \cup \{t\} ~\middle|~ \emptyset~\middle|~ \{ \sigma_y \}_{y\in \X}
    ~\middle|~ \{ [x,y^t] \}_{ x,y\in\X } \cup \{ x^{r_x} \}_{ x\in\X ,r_x<\infty} \right\ra
  \]
  is an invariant finite $L$-presentation for the wreath product $H \wr \Z$.
\end{proof}
Even though invariant finite $L$-presentations are known for
numerous self-similar groups, we are not aware of an invariant finite
$L$-presentation for the Gupta-Sidki group from~\cite{GS83}. Moreover, 
we are not aware of a finitely $L$-presented group which is not 
invariantly finitely $L$-presented. 

\section[Tietze Transformations for $L$-Presentations]{Tietze
         Transformations for \boldmath{$L$}-Presentations}
\Label{sec:NTietzeTr}
In this section, we introduce Tietze transformations for
$L$-presentations.  
Let $G = \la\X\mid\Q\mid\Phi\mid\R\ra$ be an $L$-presented
group. Denote by $F$ the free group $F(\X)$ over the alphabet $\X$ and let
$K = \la \Q \cup \bigcup_{\sigma\in\Phi^*} \R^\sigma\ra^F$ be the
kernel of the free presentation $\pi\colon F \to G$. Then $K
= \ker\pi$ decomposes into the normal subgroups $Q = \la \Q \ra^F$
and $R = \la \bigcup_{\sigma\in\Phi^*} \R^\sigma\ra^F$ so that $K
= RQ = QR$ holds. The group $F/R$ is invariantly $L$-presented by
$\la\X\mid\emptyset\mid\Phi\mid\R\ra$. We can add every element of
the kernel $K$ as a fixed relation:
\begin{proposition}\Label{prop:NTAddFxRel}
  If $G = \la\X\mid\Q\mid\Phi\mid\R\ra$ is a (finitely) $L$-presented
  group and\linebreak $\S \subseteq \la \Q \cup \bigcup_{\sigma\in\Phi^*}
  \R^\sigma\ra^F$ is a (finite) subset, then $\la \X \mid \Q \cup {\mc S}
  \mid\Phi\mid\R\ra$ is a (finite) $L$-presentation for $G$.
\end{proposition}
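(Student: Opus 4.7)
The plan is to show that adding the subset $\S$ to the fixed relations does not enlarge the normal closure that is being factored out of $F = F(\X)$, and then to observe that finiteness is preserved under the obvious cardinality count.

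First I set $K = \la \Q \cup \bigcup_{\sigma\in\Phi^*} \R^\sigma \ra^F$, which is the kernel of the original free presentation $F \twoheadrightarrow G$, and I set $K' = \la (\Q \cup \S) \cup \bigcup_{\sigma\in\Phi^*} \R^\sigma\ra^F$, the analogous kernel for the new $L$-presentation $\la\X\mid\Q\cup\S\mid\Phi\mid\R\ra$. The goal becomes $K = K'$, from which the equality of the quotients $F/K = G = F/K'$ follows immediately.

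The inclusion $K \subseteq K'$ is trivial, since the generating set of $K$ (as a normal subgroup of $F$) is contained in the generating set of $K'$. For the converse, I use the hypothesis $\S \subseteq K$: this forces $\Q \cup \S \subseteq K$, while also $\bigcup_{\sigma\in\Phi^*}\R^\sigma \subseteq K$ by definition of $K$. Hence every normal generator of $K'$ already lies inside the normal subgroup $K$ of $F$, so $K' \subseteq K$. This gives $K = K'$ and therefore $\la\X\mid\Q\cup\S\mid\Phi\mid\R\ra$ presents $G$.

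Finally, for the parenthetical finiteness claim, I simply note that if the original $L$-presentation is finite (so $\X, \Q, \Phi, \R$ are all finite) and if $\S$ is finite, then $\Q \cup \S$ is finite as well, so the resulting $L$-presentation $\la\X\mid\Q\cup\S\mid\Phi\mid\R\ra$ is finite. There is no real obstacle here; the only thing to be careful about is that $\S$ is inserted only as a \emph{fixed} relation and not into $\R$, so the substitutions in $\Phi$ are not applied to its elements and one does not need to verify any invariance condition for $\S$.
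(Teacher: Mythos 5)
Your proof is correct and is essentially the paper's argument made explicit: the paper simply invokes the classical Tietze transformation that adds consequences of a group's relations to its presentation, and your computation $K = K'$ via the two inclusions is exactly the content of that transformation applied to the presentation $\la\X\mid\Q\cup\bigcup_{\sigma\in\Phi^*}\R^\sigma\ra$. The finiteness observation and the remark that no invariance condition is needed for $\S$ (since it enters only as fixed relations) are both accurate.
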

\begin{proof}
  The proof follows with the Tietze transformation that adds
  consequences $\S$ of $G$'s relations to the group presentation
  $\la\X\mid\Q\cup\bigcup_{\sigma\in\Phi^*}\R^\sigma\ra$.
\end{proof}
The transformation in Proposition~\ref{prop:NTAddFxRel} is reversible
in the sense that we can remove fixed relations ${\mc S}$ from an
$L$-presentation $\la\X\mid\Q\cup\S\mid\Phi\mid\R\ra$ if and only
if\linebreak $\la\Q\cup{\mc S}\cup\bigcup_{\sigma\in\Phi^*} \R^\sigma\ra^F
= \la \Q\cup\bigcup_{\sigma\in\Phi^*} \R^\sigma \ra^F$ holds. The
following transformations are reversible in the same sense.\smallskip

If an $L$-presentation is not invariant (cf. Remark~\ref{rem:NTNonInvLp}),
there exist elements from the kernel $K$ of the free presentation
$\pi\colon F \to G$  that we cannot add as iterated relations
without changing the isomorphism type of the group. However, even for
non-invariant $L$-presentations we have the following
\begin{proposition}
  \Label{prop:NTAddItRel}
  If $G = \la\X\mid\Q\mid\Phi\mid\R\ra$ is a (finitely) $L$-presented
  group and\linebreak $\S \subseteq \la \bigcup_{\sigma\in\Phi^*}
  \R^\sigma\ra^F$ is a (finite) subset, then $\la \X \mid \Q
  \mid\Phi\mid\R\cup\S\ra$ is a (finite) $L$-presentation for $G$.
\end{proposition}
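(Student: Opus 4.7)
The plan is to show that the two $L$-presentations have the same kernel in the free group $F=F(\X)$, so they define isomorphic groups. Write $K=\la \Q\cup\bigcup_{\sigma\in\Phi^*}\R^\sigma\ra^F$ for the kernel of the original free presentation and $K'=\la \Q\cup\bigcup_{\sigma\in\Phi^*}(\R\cup\S)^\sigma\ra^F$ for the kernel of the new one; the goal is $K=K'$.

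One inclusion is immediate: since $\R\subseteq \R\cup\S$ and $\Q\subseteq\Q$, every generator of $K$ is a generator of $K'$, so $K\subseteq K'$. The content of the proposition is the reverse inclusion, which reduces to showing that $\S^\sigma\subseteq K$ for every $\sigma\in\Phi^*$.

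Here is the key observation. By hypothesis, each $s\in\S$ lies in the normal closure $R=\la\bigcup_{\tau\in\Phi^*}\R^\tau\ra^F$, so $s$ can be written as a finite product $s=\prod_i g_i^{-1}\,r_i^{\tau_i\varepsilon_i}\,g_i$ with $r_i\in\R$, $\tau_i\in\Phi^*$, $g_i\in F$, and signs $\varepsilon_i\in\{\pm 1\}$. Because every $\sigma\in\Phi^*$ is an endomorphism of $F$, it commutes with taking products, inverses, and conjugations; applying it to the above expression gives $s^\sigma=\prod_i (g_i^\sigma)^{-1}\,r_i^{(\tau_i\sigma)\varepsilon_i}\,g_i^\sigma$. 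Each factor lies in $\R^{\tau_i\sigma}$ (up to conjugation and inversion), and $\tau_i\sigma\in\Phi^*$, so $s^\sigma\in R\subseteq K$. This proves $\bigcup_{\sigma\in\Phi^*}\S^\sigma\subseteq K$, hence $K'\subseteq K$.

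There is no real obstacle beyond this endomorphism computation; the statement essentially records the fact that $\Phi^*$ maps the normal closure $R$ to itself in the weak sense that iterated images of $\R\cup\S$ always land back in $K$, even when the original $L$-presentation is not invariant. The finite case follows verbatim, since if $\Q,\Phi,\R$ and $\S$ are all finite then so is $\R\cup\S$. Finally, the transformation is reversible precisely when $\la\Q\cup\bigcup_{\sigma\in\Phi^*}(\R\cup\S)^\sigma\ra^F=\la\Q\cup\bigcup_{\sigma\in\Phi^*}\R^\sigma\ra^F$ holds, in complete analogy with Proposition~\ref{prop:NTAddFxRel}.
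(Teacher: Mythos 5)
Your proof is correct and follows essentially the same route as the paper: the paper's argument likewise rests on the observation that $R = \la \bigcup_{\sigma\in\Phi^*}\R^\sigma\ra^F$ is $\sigma$-invariant for every $\sigma\in\Phi^*$ (which your product-of-conjugates computation just spells out explicitly), so that the added relations $\{s^\sigma \mid s\in\S,\ \sigma\in\Phi^*\}$ are already consequences of the original ones.
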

\begin{proof}
  By construction, the normal subgroup $R = \la \bigcup_{\sigma\in\Phi^*}
  \R^\sigma\ra^F$ is $\sigma$-invariant for each $\sigma \in
  \Phi^*$. More precisely, for each $r \in R$ and $\sigma \in \Phi^*$, we
  have $r^\sigma \in R$. Therefore, adding the (possibly infinitely
  many) relations \mbox{$\{s^\sigma \mid s\in \S, \sigma\in\Phi^*\}$} to the
  group presentation $\la \X \mid \Q \cup \bigcup_{\sigma\in\Phi^*}
  \R^\sigma \ra$ does not change the isomorphism type of the group.
\end{proof}
Iterated and fixed relations of an $L$-presentation are related
by the following
\begin{proposition}\Label{prop:NTInterchFixIt}
  If $G = \la\X\mid\Q\mid\Phi\mid\R\ra$ is a (finitely) $L$-presented
  group and $\S \subseteq \R$ holds, then $\la\X\mid\Q\cup\S \mid\Phi\mid(\R
  \setminus\S)\cup\{r^\psi \mid r\in\S, \psi \in \Phi\}\ra$ is a (finite)
  $L$-presentation for $G$.
\end{proposition}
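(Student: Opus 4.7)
The plan is to prove the stronger statement that the two $L$-presentations possess literally the \emph{same} set of defining relations in $F = F(\X)$. Since both have generating set $\X$, their quotient groups will then coincide (not merely be isomorphic), and the argument reduces to a set-theoretic identity in $F$; in particular, like the preceding two propositions of the section, this transformation is manifestly reversible.

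The first step is to expand the defining relations of the new $L$-presentation,
\[
  (\Q\cup\S)\cup\bigcup_{\sigma\in\Phi^*}\bigl[(\R\setminus\S)\cup\{r^\psi \mid r\in\S,\psi\in\Phi\}\bigr]^\sigma,
\]
by distributing the endomorphism $\sigma$ over the inner union. The $(\R\setminus\S)$-part reassembles into $\bigcup_{\sigma\in\Phi^*}(\R\setminus\S)^\sigma$ without incident. The content of the argument is then the identity
\[
  \bigcup_{\sigma\in\Phi^*}\{r^\psi \mid r\in\S,\psi\in\Phi\}^\sigma \;=\; \bigcup_{\sigma\in\Phi^*\setminus\{\id\}} \S^\sigma,
\]
which I would read off from the monoid decomposition $\Phi^* = \{\id\}\cup\Phi\cdot\Phi^*$ together with the convention $(r^\psi)^\tau = r^{\psi\tau}$.

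The concluding step is to combine the pieces: the extra fixed relation $\S$ supplies the missing $\sigma=\id$ term in the identity above, so the total defining set of the new $L$-presentation collapses to $\Q\cup\bigcup_{\sigma\in\Phi^*}\R^\sigma$, the defining set of the original. Finiteness requires no extra argument: $|\Q\cup\S|\le|\Q|+|\R|$ and the new iterated relations have cardinality at most $|\R|+|\S|\cdot|\Phi|$.

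There is no genuine obstacle here; the only point demanding care is the direction in which the first substitution is factored out of a non-identity $\sigma\in\Phi^*$. The convention $(r^\psi)^\tau = r^{\psi\tau}$ forces us to write each non-identity $\sigma$ as $\psi\tau$ with the generator $\psi\in\Phi$ appearing \emph{on the left}, which is precisely why promoting $\S$ to the fixed relations costs us the images $\{r^\psi\mid r\in\S,\psi\in\Phi\}$ (rather than some mirror variant) in the new iterated relations.
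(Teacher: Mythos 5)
Your proof is correct and follows the same route as the paper: the paper's entire argument is the set identity $\Q \cup \bigcup_{\sigma\in\Phi^*} \R^\sigma = \Q \cup \S \cup \bigcup_{\sigma\in\Phi^*} \bigl((\R\setminus\S) \cup \{ r^\psi \}_{r\in \S, \psi\in\Phi} \bigr)^\sigma$ between the two defining relation sets, which is exactly what you verify via the decomposition $\Phi^* = \{\id\}\cup\Phi\cdot\Phi^*$ and the convention $(r^\psi)^\tau = r^{\psi\tau}$. You simply spell out the details that the paper leaves as ``immediate.''
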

\begin{proof}
  The proof follows immediately from
  \[
    \Q \cup \bigcup_{\sigma\in\Phi^*} \R^\sigma
    = \Q \cup \S \cup \bigcup_{\sigma\in\Phi^*} \left((\R\setminus\S) \cup
      \{ r^\psi \}_{r\in \S, \psi\in\Phi} \right)^\sigma;
  \]
  these are the relations of $G$'s group presentation.
\end{proof}
The following proposition is a consequence of the definition of an
invariant $L$-presentation:
\begin{proposition}\Label{prop:NTReplRelsInvLp}
  If $\la\X\mid\Q\mid\Phi\mid\R\ra$ is an invariant
  (finite) $L$-presentation for the group $G$ and $\S \subseteq \Q$ holds,  then
  $\la\X\mid\Q\setminus\S\mid\Phi\mid\R\cup\S\ra$ is a (finite)
  $L$-presentation for $G$.
\end{proposition}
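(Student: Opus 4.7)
My plan is to check that the two $L$-presentations define the same normal closure inside the free group $F = F(\X)$. Write $K = \la \Q \cup \bigcup_{\sigma \in \Phi^*}\R^\sigma\ra^F$, so that $G \cong F/K$, and let $K'$ denote the normal closure of the relations of the new $L$-presentation, namely
\[
  K' = \Big\la (\Q\setminus\S) \cup \bigcup_{\sigma\in\Phi^*} (\R\cup\S)^\sigma \Big\ra^F.
\]
It suffices to establish $K = K'$.

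For the inclusion $K \subseteq K'$, I would note that $\Q \setminus \S$ lies in $K'$ by definition, while taking $\sigma$ to be the identity endomorphism $\id \in \Phi^*$ shows that $\S$ also lies in $K'$, so $\Q \subseteq K'$. The remaining generators $\bigcup_{\sigma \in \Phi^*}\R^\sigma$ of $K$ are obviously contained in $K'$, so $K \subseteq K'$.

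The reverse inclusion is where invariance enters. The assumption that $\la\X\mid\Q\mid\Phi\mid\R\ra$ is invariant means that $K$ is $\sigma$-invariant for every $\sigma\in\Phi$, hence for every $\sigma\in\Phi^*$. Since $\S \subseteq \Q \subseteq K$, this yields $\S^\sigma \subseteq K$ for every $\sigma\in\Phi^*$. Combined with the trivial $\R^\sigma \subseteq K$ and $\Q\setminus\S \subseteq K$, every generator of $K'$ lies in $K$, whence $K'\subseteq K$. Finiteness is preserved because $|\Q\setminus\S| \leq |\Q|$ and $|\R \cup \S| \leq |\R| + |\Q|$.

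The only delicate point is the use of invariance to control $\S^\sigma$ for $\sigma \in \Phi^*$; this is precisely the ingredient that fails in the non-invariant setting of Remark~\ref{rem:NTNonInvLp}, where moving a fixed relation into the iterated pile introduces new consequences and collapses the group. Once invariance is invoked, the argument is entirely bookkeeping with the two normal closures.
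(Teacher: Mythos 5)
Your proof is correct and follows essentially the same route as the paper: the paper's (much terser) argument likewise observes that invariance makes the images $q^\sigma$ with $q\in\S$, $\sigma\in\Phi^*$ relations of $G$, which is exactly your inclusion $K'\subseteq K$, while the reverse inclusion via $\id\in\Phi^*$ is left implicit there. Your write-up merely makes both inclusions between the two normal closures explicit.
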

\begin{proof}
  Since $G$ is invariantly $L$-presented by
  $\la\X\mid\Q\mid\Phi\mid\R\ra$, each $\sigma \in \Phi$ induces an
  endomorphism of the group $G$. Therefore, the images \mbox{$\{q^\sigma
  \mid q \in \S, \sigma \in \Phi^*\}$} are relations within $G$ and
  so $\la\X \mid (\Q\setminus\S) \cup \bigcup_{\sigma\in\Phi^*}
  (\R\cup\S)^\sigma\ra$ is a presentation for $G$.
\end{proof}
The following proposition allows us to add generators together with
fixed relations to an $L$-presentation:
\begin{proposition}\Label{prop:NTAddGensFx}
  Let $G = \la\X\mid\Q\mid\Phi\mid\R\ra$ be an $L$-presented group, ${\mc
  Z}$ be an alphabet so that $\X \cap {\mc Z} = \emptyset$ holds, and, for
  each $z \in {\mc Z}$, let $w_z \in F(\X)$ be given. For each 
  $\sigma \in \Phi$, define an endomorphism of the free group $E$
  over the alphabet $\X \cup{\mc Z}$ that is induced by the map
  \begin{equation}
    \ti\sigma\colon \left\{ \begin{array}{rcll}
      x &\mapsto& x^\sigma, &\textrm{ for each } x \in \X,\\
      z &\mapsto& g_z,&\textrm{ for each } z \in {\mc Z}, 
    \end{array}\right.
    \Label{eqn:NTAddGensFx}
  \end{equation}
  where $g_z$ are arbitrary elements of the free group $E$.
  Then $G$ satisfies that
  \begin{equation}
    G \cong \la\,\X\cup{\mc Z}\mid \Q\cup\{z^{-1} w_z\}_{ z \in {\mc Z}
    } \mid \{\ti\sigma \}_{\sigma\in\Phi}\mid\R\,\ra.
    \Label{eqn:NTAddGensFxLp}
  \end{equation}
  If $\la\X\mid\Q\mid\Phi\mid\R\ra$ is a finite $L$-presentation 
  and ${\mc Z}$ is a finite alphabet, the $L$-presentation in
  Eq.~(\ref{eqn:NTAddGensFxLp}) is finite.
\end{proposition}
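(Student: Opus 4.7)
The plan is to show that the $L$-presentation on the right-hand side of~(\ref{eqn:NTAddGensFxLp}) defines the same group as the original presentation of $G$, by carrying out a classical Tietze-style elimination of the new generators $\mc Z$. Write $H$ for the group defined by~(\ref{eqn:NTAddGensFxLp}) and $E = F(\X \cup \mc Z)$ for the underlying free group of the enlarged $L$-presentation, and let $\pi\colon E \to H$ denote the canonical map.

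The crucial first observation is that, by the definition of $\ti\sigma$ in Eq.~(\ref{eqn:NTAddGensFx}), the endomorphism $\ti\sigma$ restricted to $F(\X) \subseteq E$ coincides with $\sigma$ itself (composed with the inclusion $F(\X) \hookrightarrow E$), since $\ti\sigma$ sends each $x \in \X$ to $x^\sigma \in F(\X)$. By induction on the length of compositions, every $\ti\tau \in \{\ti\sigma\}_{\sigma\in\Phi}^*$ acts on $F(\X)$ as the corresponding $\tau \in \Phi^*$. In particular, because $\R \subseteq F(\X)$, we obtain the equality of sets
\[
  \bigcup_{\ti\tau \in \{\ti\sigma\}^*} \R^{\ti\tau} \;=\; \bigcup_{\tau \in \Phi^*} \R^\tau
\]
inside $E$; these iterated relations are independent of the arbitrary choices $g_z \in E$.

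Using this identification, the $L$-presentation~(\ref{eqn:NTAddGensFxLp}) unfolds to the ordinary group presentation
\[
  H = \la\, \X \cup \mc Z \mid \Q \cup \{z^{-1}w_z\}_{z\in\mc Z} \cup \textstyle\bigcup_{\tau\in\Phi^*}\R^\tau\, \ra.
\]
Now I apply the classical Tietze transformation that simultaneously removes, for each $z \in \mc Z$, the generator $z$ together with the relation $z^{-1}w_z$ (since this relation expresses $z$ as a word $w_z \in F(\X)$). Crucially, none of the other relations, namely $\Q \subseteq F(\X)$ and $\bigcup_{\tau\in\Phi^*}\R^\tau \subseteq F(\X)$, contain any letter of $\mc Z$, so the elimination leaves them untouched. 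The resulting presentation is precisely $\la \X \mid \Q \cup \bigcup_{\tau\in\Phi^*}\R^\tau\ra$, which is the ordinary presentation associated with $\la\X\mid\Q\mid\Phi\mid\R\ra$; hence $H \cong G$.

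Finally, for the finiteness claim, if $\X$, $\Q$, $\Phi$, $\R$ and $\mc Z$ are all finite, then the generating set $\X \cup \mc Z$, the fixed relations $\Q \cup \{z^{-1}w_z\}_{z\in\mc Z}$, the substitution set $\{\ti\sigma\}_{\sigma\in\Phi}$, and the iterated relation set $\R$ are simultaneously finite, so the $L$-presentation~(\ref{eqn:NTAddGensFxLp}) is finite as well. There is no real obstacle here; the only subtlety worth stressing is that the arbitrariness of the images $g_z$ under the $\ti\sigma$ is harmless because $\mc Z$ does not appear in any relation that gets iterated, so the action of $\ti\sigma$ on $\mc Z$ never contributes to the normal closure defining $H$.
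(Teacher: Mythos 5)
Your proof is correct and follows essentially the same route as the paper: both arguments rest on the observation that each $\ti\sigma$ restricts to $\sigma$ on $F(\X)$, so the iterated relations coincide with $\bigcup_{\tau\in\Phi^*}\R^\tau$ regardless of the choices $g_z$, after which the generators of ${\mc Z}$ are eliminated via the relations $z^{-1}w_z$. The paper implements that elimination by writing down the retraction $E\to F(\X)$, $z\mapsto w_z$, and checking it induces mutually inverse maps, which is exactly the mechanism behind the classical Tietze move you invoke.
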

\begin{proof}
  Write $H = \la \X\cup{\mc Z}\mid \Q\cup\{z^{-1} w_z\mid z\in{\mc
  Z}\} \mid\{\ti\sigma\mid \sigma\in\Phi\}\mid\R\ra$ and let $F$ and
  $E$ be the free groups over $\X$ and $\X\cup{\mc Z}$,
  respectively. To avoid confusion, the elements of $G$'s presentation
  are denoted by $\overline{g}\in F$. Then 
  \[
    \pi\colon \left\{\begin{array}{rcll}
      x &\mapsto& \overline{x}, &\textrm{for each } x \in \X, \\
      z &\mapsto& \overline{w_z}, &\textrm{for each }z \in {\mc Z},
    \end{array}\right.
  \]
  induces a surjective homomorphism $\pi\colon E \to F$.  By construction,
  the restriction of the substitution $\ti\sigma$ to the free group $F$ coincides with $\sigma$.
  Thus $\left(\bigcup_{\sigma\in\Phi} \R^{\ti\sigma}\right)^\pi
  = \bigcup_{\sigma\in\Phi^*} \R^\sigma$ and hence, $\pi$ maps
  iterated relations of $H$'s $L$-presentation to iterated relations
  of $G$. Similarly, $\pi$ maps the fixed relations $\Q$ of $H$'s
  $L$-presentation to fixed relations of $G$. It remains to consider
  the relations of the form $z^{-1}w_z$ with $z\in{\mc Z}$. However,
  these relations are mapped trivially by $\pi$. This shows that the
  homomorphism $\pi\colon E \to F$ induces a surjective homomorphism
  $\ti\pi\colon H \to G$. On the other hand, identifying the generators of
  $G$'s $L$-presentation with the generators of $H$ induces a surjective
  homomorphism $\varphi\colon G\to H$ with $\varphi\ti\pi = \id_H$
  and $\ti\pi\varphi = \id_G$. Hence, the groups $G$ and $H$
  are isomorphic. The second assertion is obvious.
\end{proof}
We can also add the relations $\{z^{-1} w_z \mid z \in {\mc Z}\}$
in Proposition~\ref{prop:NTAddGensFx} as iterated relations to the
$L$-presentation if we define the substitutions $\ti\sigma$ as follows:
\begin{proposition}\Label{prop:NTAddGensIt}
  Let $G = \la\X\mid\Q\mid\Phi\mid\R\ra$ be an $L$-presented group,
  ${\mc Z}$ be an alphabet so that $\X \cap {\mc Z} = \emptyset$ holds, and, for
  each $z \in {\mc Z}$, let $w_z \in F(\X)$ be given. For each 
  $\sigma \in \Phi$, define an endomorphism of the free group $E$
  over the alphabet $\X \cup {\mc Z}$ that is induced by the map
  \begin{equation}
    \ti\sigma\colon \left\{ \begin{array}{rcll}
      x &\mapsto& x^\sigma, &\textrm{ for each } x \in \X,\\
      z &\mapsto& w_z^\sigma, &\textrm{ for each } z \in {\mc Z}. 
    \end{array}\right.\Label{eqn:NTAddGensIt}
  \end{equation}
  Then $G$ satisfies that 
  \begin{equation}
    G \cong \la\,\X\cup{\mc Z}\mid \Q \mid\{\ti\sigma \}_{
    \sigma\in\Phi}\mid\R\cup\{z^{-1} w_z\}_{ z \in {\mc Z} }\,\ra.
    \Label{eqn:NTAddGensItLp}
  \end{equation}
  If $\la\X\mid\Q\mid\Phi\mid\R\ra$ is a finite $L$-presentation 
  and ${\mc Z}$ is a finite alphabet, the $L$-presentation in
  Eq.~(\ref{eqn:NTAddGensItLp}) is finite.
\end{proposition}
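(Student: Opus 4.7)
The plan is to imitate the proof of Proposition~\ref{prop:NTAddGensFx}, adapting only the verifications that concern the new generators. Let $H$ denote the group defined by the $L$-presentation in Eq.~(\ref{eqn:NTAddGensItLp}) and let $F$, $E$ be the free groups over $\X$ and $\X\cup{\mc Z}$, respectively. I would define
\[
  \pi\colon E \to F,\qquad x \mapsto x \text{ for } x\in\X, \qquad z\mapsto w_z \text{ for } z\in{\mc Z},
\]
and aim to show that $\pi$ descends to a surjective homomorphism $\ti\pi\colon H \to G$ admitting the obvious inverse $\varphi\colon G \to H$, $x \mapsto x$.

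The first step is to check that $\pi$ maps every relation of $H$'s $L$-presentation into $\ker(F \twoheadrightarrow G)$. The fixed relations $\Q \subseteq F$ are untouched by $\pi$. For $r \in \R \subseteq F$, the restriction $\ti\sigma|_F$ coincides with $\sigma$ by the very definition in Eq.~(\ref{eqn:NTAddGensIt}), hence $r^{\ti\sigma} = r^\sigma$ and $\pi(r^\sigma) = r^\sigma$ is an iterated relation of $G$. The crucial new ingredient is the calculation
\[
  (z^{-1}w_z)^{\ti\sigma} \;=\; (w_z^\sigma)^{-1}\cdot w_z^{\ti\sigma} \;=\; (w_z^\sigma)^{-1}\cdot w_z^\sigma \;=\; 1
\]
in the free group $E$, where the middle equality uses $w_z \in F(\X)$ to conclude $w_z^{\ti\sigma} = w_z^\sigma$. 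Consequently the only non-trivial iterated images of $z^{-1}w_z$ under the monoid $\{\ti\sigma\mid \sigma\in\Phi\}^*$ are the relations themselves (the empty-product case), and these are sent to $1 \in F$ by $\pi$. This is precisely the reason that $\ti\sigma(z)$ must be defined as $w_z^\sigma$, rather than being free as in Proposition~\ref{prop:NTAddGensFx}, and it is the only substantive obstacle in the proof.

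Granted this, $\pi$ descends to a surjection $\ti\pi\colon H \to G$. Conversely, identifying the generators $\X$ of $G$'s presentation with the generators $\X$ of $H$ induces a well-defined homomorphism $\varphi\colon G \to H$, because $\Q$ and the iterated relations $\R^\sigma$ are retained verbatim in $H$'s $L$-presentation. To finish, verify $\ti\pi\varphi = \id_G$ and $\varphi\ti\pi = \id_H$ on generators; the only check that is not immediate is $\varphi\ti\pi(z) = \varphi(w_z) = w_z$, which equals $z$ in $H$ by the newly added relation $z^{-1}w_z$. The finiteness assertion in the second sentence is immediate from the construction, since $\ti\sigma$ is determined by the images of the finite alphabet $\X\cup{\mc Z}$.
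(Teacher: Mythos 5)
Your proof is correct, and its crux --- the computation $(z^{-1}w_z)^{\ti\sigma} = (w_z^{\sigma})^{-1}\,w_z^{\ti\sigma} = (w_z^{\sigma})^{-1}\,w_z^{\sigma} = 1$, valid precisely because $w_z \in F(\X)$ so that $\ti\sigma$ and $\sigma$ agree on $w_z$ --- is exactly the identity on which the paper's proof also turns. The route differs only in packaging: rather than re-running the homomorphism construction, the paper chains two of its earlier transformations. It first invokes Proposition~\ref{prop:NTAddGensFx} with the admissible choice $g_z = w_z^{\sigma}$ to get $G \cong \la\,\X\cup{\mc Z}\mid \Q\cup\{z^{-1}w_z\}_{z\in{\mc Z}}\mid\{\ti\sigma\}_{\sigma\in\Phi}\mid\R\,\ra$, notes that adjoining the iterated relations $(z^{-1}w_z)^{\ti\sigma}=1$ changes nothing since they are trivial in $E$, and then applies Proposition~\ref{prop:NTInterchFixIt} to move $\{z^{-1}w_z\}_{z\in{\mc Z}}$ from the fixed to the iterated relations. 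Your direct argument buys self-containedness and makes explicit why \emph{all} higher iterated images of $z^{-1}w_z$ vanish (after one application of any $\ti\sigma$ the element is already trivial in the free group, so the whole $\ti\Phi^*$-orbit contributes nothing new); the paper's reduction buys brevity and stays inside the calculus of $L$-Tietze transformations it is assembling. One minor presentational point: when verifying that $\pi$ carries the iterated relations of $H$ into relations of $G$ you only treat a single $\ti\sigma$; strictly one should record that $r^{\ti\sigma_1\cdots\ti\sigma_k} = r^{\sigma_1\cdots\sigma_k}$ for every word in the monoid, which follows by an immediate induction because each $\ti\sigma$ maps $F(\X)$ into itself. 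This does not affect the correctness of your argument.
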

\begin{proof}
  The substitutions $\ti\sigma$ in Eq.~(\ref{eqn:NTAddGensIt}) are
  well-defined because $w_z \in F(\X)$ and \mbox{$\sigma \in \End(F(\X))$}
  hold. By Proposition~\ref{prop:NTInterchFixIt}, we have that
  \begin{eqnarray*}
    && \left\la \X\cup{\mc Z}~\middle|~\Q~\middle|~\{\ti\sigma\}_{\sigma\in\Phi}
      ~\middle|~\R\cup\{z^{-1} w_z \}_{ z \in {\mc Z}}\right\ra \\[0.5ex]
    &=& \left\la \X\cup{\mc Z}~\middle|~ \Q\cup\{z^{-1} w_z\}_{ z \in {\mc Z}}
      ~\middle|~\{\ti\sigma\}_{\sigma\in\Phi}~\middle|~\R\cup
      \{(z^{-1} w_z)^{\ti\sigma} \}_{z\in {\mc Z}, \sigma\in\Phi}\right\ra.
  \end{eqnarray*}
  By definition of $\ti\sigma$ in Eq.~(\ref{eqn:NTAddGensIt}), we have
  $(z^{-1})^{\ti\sigma} = (w_z^\sigma)^{-1}$ and $w_z^{\ti\sigma} =
  w_z^\sigma$. Thus $(z^{-1}\,w_z)^{\ti\sigma} = (w_z^\sigma)^{-1} w_z^\sigma =
  1$ holds. In particular, adding the relations
  $\{(z^{-1}w_z)^{\ti\sigma}\mid z\in{\mc Z},\sigma\in\Phi\}$ to a group
  presentation does not change the isomorphism type of the group. By
  Proposition~\ref{prop:NTAddGensFx}, we have that
  \begin{eqnarray*}
    G&=&\left\la \X ~\middle|~ \Q ~\middle|~\Phi~\middle|~\R\right\ra\\ 
    &\cong&\left\la \X\cup{\mc Z}~\middle|~ \Q\cup\{z^{-1} w_z \}_{z\in{\mc Z}} ~\middle|~\{\ti\sigma\}_{\sigma\in\Phi}~\middle|~\R\right\ra \\
    &=&\left\la \X\cup{\mc Z}~\middle|~ \Q\cup\{z^{-1} w_z \}_{ z\in {\mc Z}} ~\middle|~\{\ti\sigma\}_{\sigma\in\Phi}~\middle|~\R\cup\{ (z^{-1} w_z)^{\ti\sigma} \}_{ z\in {\mc Z},\sigma\in\Phi }\right\ra \\
    &=& \left\la \X\cup{\mc Z}~\middle|~ \Q ~\middle|~\{\ti\sigma\}_{\sigma\in\Phi}~\middle|~\R\cup\{z^{-1} w_z\}_{z \in {\mc Z}}\right\ra;
  \end{eqnarray*}
  which proves the first assertion of Proposition~\ref{prop:NTAddGensIt}
  while the second is obvious.
\end{proof}
The following proposition allows
us to modify the substitutions of an $L$-presentation:
\begin{proposition}
  \Label{prop:NTModPhi}
  If $G = \la\X\mid\Q\mid\Phi\mid\R\ra$ is a (finitely) $L$-presented group
  and $\Psi \subseteq \Phi$ holds, then 
  $\la\X\mid\Q\mid(\Phi\setminus\Psi)\cup\{\sigma\psi\mid \psi\in\Psi, \sigma\in\Phi\} \mid \R \cup \bigcup_{\psi \in \Psi} \R^\psi\ra$
  is a (finite) $L$-presentation for $G$. 
\end{proposition}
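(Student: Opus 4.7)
My plan is to prove Proposition~\ref{prop:NTModPhi} by establishing the set-level identity
\[
  \Q\cup\bigcup_{\sigma\in\Phi^*}\R^\sigma
  \;=\; \Q\cup\bigcup_{\tau\in(\Phi')^*}(\R')^\tau,
\]
where $\Phi':=(\Phi\setminus\Psi)\cup\{\sigma\psi\mid\sigma\in\Phi,\psi\in\Psi\}$ and $\R':=\R\cup\bigcup_{\psi\in\Psi}\R^\psi$. Because the fixed relations $\Q$ are preserved, this is stronger than the required isomorphism: the two group presentations then coincide verbatim.

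The inclusion $(\supseteq)$ is essentially formal. Each element of $\Phi'$ lies either in $\Phi$ itself or in $\Phi\cdot\Phi\subseteq\Phi^*$, hence $(\Phi')^*\subseteq\Phi^*$; and $\R'\subseteq\R^{\{\id\}\cup\Psi}\subseteq\bigcup_{\sigma\in\Phi^*}\R^\sigma$.

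For the non-trivial direction $(\subseteq)$, I would induct on the length $n$ of a factorization $\sigma=\sigma_{1}\cdots\sigma_{n}$ with each $\sigma_i\in\Phi$, showing that $r^{\sigma_{1}\cdots\sigma_{n}}$ lies in the right-hand side for every $r\in\R$. The idea is to parse $\sigma$ into $\Phi'$-tiles from the right: if $\sigma_n\in\Phi\setminus\Psi$, the rightmost tile is $\sigma_n\in\Phi'$ itself and the inductive hypothesis applies to the shorter word $\sigma_{1}\cdots\sigma_{n-1}$; if $\sigma_n\in\Psi$ and $n\ge 2$, the rightmost tile is the composite $\sigma_{n-1}\sigma_n\in\Phi\cdot\Psi\subseteq\Phi'$ and the hypothesis applies to $\sigma_{1}\cdots\sigma_{n-2}$. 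The base cases are $n=0$, where $r\in\R\subseteq\R'$, and $n=1$ with $\sigma_1\in\Psi$, where $r^{\sigma_1}\in\R^{\sigma_1}\subseteq\R'$; both are hit by the empty word of $(\Phi')^*$.

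The only real obstacle is the combinatorial check that this right-to-left tiling actually exhausts every word over $\Phi$: a trailing $\Psi$-symbol can always be merged with its (arbitrary) left neighbour in $\Phi$ to form a legal double tile, while an otherwise unpaired leading $\Psi$-symbol is absorbed into the enlarged family $\R'$. Once the set identity is in place, the finiteness assertion of the proposition follows immediately from the bounds $|\Phi'|\le|\Phi\setminus\Psi|+|\Phi|\cdot|\Psi|$ and $|\R'|\le|\R|\cdot(1+|\Psi|)$.
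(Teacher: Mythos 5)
Your proposal is correct and follows essentially the same route as the paper: the paper's entire proof is the assertion of the set identity $\Q \cup \bigcup_{\sigma\in\Phi^*} \R^\sigma = \Q \cup \bigcup_{\sigma\in\widehat\Phi^*}\bigl( \R \cup \bigcup_{\psi \in \Psi} \R^\psi \bigr)^\sigma$, declared to follow ``immediately.'' Your right-to-left tiling induction (merging a trailing $\Psi$-letter with its left neighbour, absorbing an initial unpaired $\Psi$-letter into $\R'$) is exactly the verification the paper leaves implicit, and it is sound.
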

\begin{proof}
  The proof follows immediately from 
  \[
     \Q \cup \bigcup_{\sigma\in\Phi^*} \R^\sigma 
   = \Q \cup \bigcup_{\sigma\in\widehat\Phi^*}\Big( \R \cup \bigcup_{\psi \in \Psi} \R^\psi \Big)^\sigma
  \]
  where $\widehat\Phi = (\Phi\setminus\Psi)\cup\{\sigma\psi\mid \psi\in\Psi,
  \sigma\in\Phi\}$; these are the relations of $G$'s group presentation.
\end{proof}
Since each relation of a group presentation can be replaced by a
conjugate, we can modify the substitutions of an $L$-presentation 
as follows:
\begin{proposition}\Label{prop:NTModPhiConj}
  Let $G = \la\X\mid\Q\mid\Phi\mid\R\ra$ be a (finitely) $L$-presented
  group, $\S \subseteq F$ be a (finite) subset, and let $\Psi \subseteq
  \Phi$ be given. For each $x \in \S$, denote by $\delta_x$ the inner
  automorphism of the free group $F(\X)$ that is induced by conjugation with
  $x$. Then
  \begin{itemize}\addtolength{\itemsep}{-1ex}
  \item $\la\X\mid\Q\mid\Phi\cup\{\delta_x \mid x \in \S\} \mid \R\ra$,
  \item $\la\X\mid\Q\mid(\Phi\setminus\Psi)\cup \{\delta_x\sigma \mid x \in \S, \sigma \in \Psi\}\mid \R\ra$,
  and
  \item $\la\X\mid\Q\mid(\Phi\setminus\Psi)\cup \{\sigma\delta_x \mid x \in \S, \sigma \in \Psi\}\mid \R\ra$
  \end{itemize}
  are (finite) $L$-presentations for $G$.
\end{proposition}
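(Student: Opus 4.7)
The plan is to handle the three items uniformly by showing in each case that the normal closure
\[
  N' = \la \Q \cup \bigcup_{\sigma\in\widehat\Phi^*}\R^\sigma\ra^F
\]
of the proposed relations equals the kernel $N = \la \Q \cup \bigcup_{\sigma\in\Phi^*}\R^\sigma\ra^F$ of the free presentation of $G$, where $\widehat\Phi$ denotes the new substitution set. The central observation is that any inner automorphism $\delta_x \in \Inn(F)$ sends a word to a conjugate, and that $\delta_x$ and an endomorphism $\sigma \in \End(F)$ satisfy the commutation identity
\[
  w^{\delta_x\sigma} \;=\; (x^\sigma)^{-1}\,w^\sigma\,x^\sigma \;=\; w^{\sigma\,\delta_{x^\sigma}},
\]
together with $w^{\sigma\delta_x} = x^{-1}\,w^\sigma\,x$. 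Hence, in any composition of substitutions in $\Phi$ and inner automorphisms $\delta_x$, any occurrence of an inner automorphism can be pushed to the outside at the cost only of a change of the conjugating element.

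For part (i), let $\widehat\Phi = \Phi \cup \{\delta_x \mid x \in \S\}$. The inclusion $N \subseteq N'$ is trivial since $\Phi \subseteq \widehat\Phi$. For the reverse, I would prove by induction on $n$ that for every word $\sigma = \tau_1\cdots\tau_n \in \widehat\Phi^*$ and every $r \in \R$, the image $r^\sigma$ is an $F$-conjugate of $r^{\sigma'}$, where $\sigma' \in \Phi^*$ is obtained from $\sigma$ by deleting every $\delta_x$-letter; the inductive step either discards a leading $\delta_x$ (which only conjugates the result) or pushes an interior $\delta_x$ past a following $\Phi$-letter using the commutation identity. Since conjugates of elements of $N$ lie in $N$, this yields $r^\sigma \in N$ and hence $N' \subseteq N$. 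Parts (ii) and (iii) are treated by the same induction after observing that every $\tau \in \widehat\Phi$ has the form $\delta_x^\varepsilon \sigma$ (resp.\ $\sigma\delta_x^\varepsilon$) with $\sigma \in \Phi$ and $\varepsilon \in \{0,1\}$; a word $\tau_1\cdots\tau_n \in \widehat\Phi^*$ applied to $r \in \R$ then produces an $F$-conjugate of $r^{\sigma_1\cdots\sigma_n}$, where $\sigma_i$ is the $\Phi$-part of $\tau_i$, giving $N' \subseteq N$. For the reverse inclusion, each $\sigma \in \Phi \setminus \Psi$ is already in $\widehat\Phi$, and each $\sigma \in \Psi$ is recovered by fixing any $x_0 \in \S$ and using
\[
  r^\sigma = x_0^\sigma \cdot r^{\delta_{x_0}\sigma} \cdot (x_0^\sigma)^{-1} \quad \textrm{in case (ii)}, \qquad r^\sigma = x_0 \cdot r^{\sigma\delta_{x_0}} \cdot x_0^{-1} \quad \textrm{in case (iii)};
\]
iterating one letter at a time along a word $\sigma_1\cdots\sigma_n \in \Phi^*$ then places every original iterated relation into $N'$ up to $F$-conjugation.

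The argument is largely mechanical once the commutation identities are in hand; the main obstacle is the bookkeeping of conjugating elements in the induction, which must be tracked through each application of an endomorphism $\sigma_i \in \Phi$ because such a $\sigma_i$ transforms the conjugating element nontrivially. Cases (ii) and (iii) additionally need $\S$ to be nonempty whenever $\Psi \neq \emptyset$, in order to recover the $\sigma \in \Psi$; the degenerate cases $\Psi = \emptyset$ or $\S = \emptyset$ with $\Psi = \emptyset$ are vacuous. No ideas are required beyond those already exploited in the earlier Tietze transformations, notably Propositions~\ref{prop:NTAddItRel} and~\ref{prop:NTModPhi}.
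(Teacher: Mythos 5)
Your proof is correct and takes essentially the same approach as the paper, whose entire argument is the one-line observation that relations may be replaced by conjugates together with the commutation identity $\delta_x\sigma = \sigma\delta_{x^\sigma}$; you simply carry out the induction that this observation implicitly requires. Your caveat that cases (ii) and (iii) need $\S \neq \emptyset$ whenever $\Psi \neq \emptyset$ is a valid point that the paper's statement glosses over.
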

\begin{proof}
  This follows because each relation of a group presentation can be
  replaced by a conjugate and we have $\delta_x \sigma = \sigma
  \delta_{x^\sigma}$ for each $\sigma \in \Phi^*$ and $x\in\X$.
\end{proof}
Recall that the kernel $K =
\la\Q\cup\bigcup_{\sigma\in\Phi^*} \R^\sigma\ra^F$ of the free
presentation $\pi\colon F \to G$ decomposes into the normal subgroups $Q =
\la\Q\ra^F$ and $R = \la \bigcup_{\sigma\in\Phi^*} \R^\sigma\ra^F$
so that $K = QR = RQ$ holds. This decomposition yields the following
\begin{proposition} \Label{prop:NTAddSubst}
  Let $G = \la\X\mid\Q\mid\Phi\mid\R\ra$ be a (finitely) $L$-presented
  group and let $\Psi \subseteq \End(F(\X))$ be a (finite) subset. If
  each \mbox{$\psi \in \Psi$} induces an endomorphism of $F(\X)/R$, then
  $\la\X\mid\Q\mid\Phi\cup\Psi\mid\R\ra$ is a (finite) $L$-presentation
  for $G$.
\end{proposition}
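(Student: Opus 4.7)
The plan is to show that the normal subgroup of $F = F(\X)$ generated by the iterated relations does not change when we enlarge $\Phi$ by $\Psi$; combined with the unchanged fixed relations $\Q$, this will give the same kernel and hence the same group.

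Write $R = \la \bigcup_{\sigma\in\Phi^*} \R^\sigma \ra^F$ for the normal closure of the iterated relations of the original $L$-presentation, and $R' = \la \bigcup_{\sigma\in(\Phi\cup\Psi)^*} \R^\sigma\ra^F$ for the corresponding normal subgroup of the modified $L$-presentation. The inclusion $R \subseteq R'$ is immediate from $\Phi^* \subseteq (\Phi\cup\Psi)^*$. The content of the proposition is the reverse inclusion $R' \subseteq R$, and I would establish it by proving that $R$ is invariant under every $\tau \in \Phi \cup \Psi$, hence under every element of the monoid $(\Phi\cup\Psi)^*$.

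For $\tau \in \Psi$ this is exactly the hypothesis that $\tau$ induces an endomorphism of $F/R$. For $\tau \in \Phi$, a short computation using that endomorphisms respect normal closures of subsets stably under conjugation by $F$ gives $\tau(R) \subseteq \la \bigcup_{\sigma\in\Phi^*} \R^{\sigma\tau}\ra^F$, which lies in $R$ because $\sigma\tau \in \Phi^*$ whenever $\sigma \in \Phi^*$ and $\tau \in \Phi$. Therefore every $\tau \in \Phi\cup\Psi$ leaves $R$ invariant, and iterating yields $\sigma(R) \subseteq R$ for every $\sigma \in (\Phi\cup\Psi)^*$. Since $\R \subseteq R$ (taking $\sigma$ to be the identity in $\Phi^*$), this gives $\R^\sigma = \sigma(\R) \subseteq R$ for every $\sigma \in (\Phi\cup\Psi)^*$, so the normal closure $R'$ is contained in $R$.

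Combining the two inclusions yields $R = R'$. Consequently the kernels $\la \Q\cup\bigcup_{\sigma\in\Phi^*}\R^\sigma\ra^F = Q \cdot R$ and $\la \Q\cup\bigcup_{\sigma\in(\Phi\cup\Psi)^*}\R^\sigma\ra^F = Q \cdot R'$ of the two free presentations agree, so the two $L$-presentations define the same quotient of $F$, proving the first assertion. The finiteness statement is immediate: if $\X$, $\Q$, $\Phi$, $\R$, and $\Psi$ are all finite, then $\Phi\cup\Psi$ is finite as well. The only place demanding care is checking that endomorphisms of $F$ send normal closures into normal closures of their images; this is routine but is the one step I would write out explicitly, since it is where the shift between conjugation in $F$ and the action of $\tau$ needs to be handled cleanly.
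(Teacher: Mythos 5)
Your argument is correct and is essentially the paper's own proof written out in full: the paper likewise observes that $R=\la\bigcup_{\sigma\in\Phi^*}\R^\sigma\ra^F$ is $\Phi$-invariant by construction and $\Psi$-invariant by hypothesis, so every new iterated relation $r^\sigma$ with $\sigma\in(\Phi\cup\Psi)^*\setminus\Phi^*$ already lies in $R$ and adding these consequences does not change the group. Your explicit two-inclusion formulation $R=R'$ and the remark about endomorphisms respecting normal closures merely spell out the steps the paper leaves implicit.
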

\begin{proof}
  If $\psi \in \Psi$ induces an endomorphism of $F(\X)/R$, the normal
  subgroup $R$ is $\psi$-invariant. Therefore, each image $r^{\sigma} \in
  F(\X)$, with $\sigma \in (\Phi\cup\Psi)^*\setminus \Phi^*$ and $r\in
  \R$, is a relation of the group. Adding these (possibly infinitely many)
  relations to the group presentation does not change the isomorphism
  type of the group.
\end{proof}
For an invariant $L$-presentation, we even have the following
\begin{proposition}\Label{prop:NTAddSubstInv}
  Let $G = \la\X\mid\Q\mid\Phi\mid\R\ra$ be a (finitely)
  $L$-presented group and let\linebreak $\Psi \subseteq \End(F(\X))$ be a (finite)
  subset. Then \mbox{$\la\X\mid\Q\mid\Phi\cup\Psi\mid\R\ra$} is a
  (finite) $L$-pre\-sen\-ta\-tion for $G$ if and only if each $\psi\in\Psi$
  induces an endomorphism of $G$.
\end{proposition}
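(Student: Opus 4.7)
The plan is to prove both directions of the equivalence by working with the kernel $K = \la \Q \cup \bigcup_{\sigma \in \Phi^*} \R^\sigma \ra^{F(\X)}$ of the free presentation $F(\X) \to G$. Invariance of the original $L$-presentation means $K^\sigma \subseteq K$ for every $\sigma \in \Phi$, and hence for every $\sigma \in \Phi^*$. The $(\Leftarrow)$ direction amounts to propagating this invariance to the larger monoid $(\Phi \cup \Psi)^*$.

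For $(\Leftarrow)$, I would assume that each $\psi \in \Psi$ induces an endomorphism of $G$, so that $K^\psi \subseteq K$. Combined with the $\Phi^*$-invariance of $K$, this yields $(\Phi \cup \Psi)^*$-invariance. Since $\R \subseteq K$, it follows that $\R^\tau \subseteq K$ for every $\tau \in (\Phi \cup \Psi)^*$, so the kernel of $\la \X \mid \Q \mid \Phi \cup \Psi \mid \R \ra$ is contained in $K$; the reverse inclusion is immediate because the new presentation already contains the normal generators $\Q$ and $\R^\sigma$, $\sigma \in \Phi^*$, of $K$. Hence the new presentation defines $G$, and since each of its substitutions induces an endomorphism of $G$ (those in $\Phi$ by hypothesis on the original, those in $\Psi$ by assumption), it is invariant. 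This could equivalently be deduced by applying Proposition~\ref{prop:NTAddSubst} to the ascending form $\la \X \mid \emptyset \mid \Phi \mid \Q \cup \R \ra$ supplied by Proposition~\ref{prop:NTReplRelsInvLp}, in which the iterated normal closure coincides with $K$. The $(\Rightarrow)$ direction is then immediate from the definition of invariance: if $\la \X \mid \Q \mid \Phi \cup \Psi \mid \R \ra$ is an invariant $L$-presentation for $G$, then in particular each $\psi \in \Psi$ induces an endomorphism of $G$.

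The main subtlety I would flag is the precise reading of ``$L$-presentation for $G$'' in the conclusion: read without the implicit invariance, the $(\Rightarrow)$ direction fails in degenerate cases---for instance, with $\Phi = \R = \emptyset$ any $\psi$ that fails to preserve $K$ gives a counterexample---so I would interpret the statement as asserting that the new presentation is \emph{invariantly} an $L$-presentation for $G$, consistent with the preamble ``For an invariant $L$-presentation, we even have the following.'' Once this reading is fixed, no further technical obstacle arises.
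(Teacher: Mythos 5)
Your proof is correct and follows essentially the same route as the paper: the paper disposes of the ($\Leftarrow$) direction by citing Proposition~\ref{prop:NTAddSubst} (implicitly after passing to the ascending form, which is exactly the alternative you mention), and of the ($\Rightarrow$) direction by the definition of invariance. Your explicit kernel computation and your remark that the conclusion must be read as ``invariantly an $L$-presentation'' only make explicit what the paper's two-line proof leaves implicit.
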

\begin{proof}
  Let $K = \la \Q \cup \bigcup_{\sigma\in\Phi^*} \R^\sigma \ra^F$
  be the kernel of the free presentation $\pi\colon F(\X)
  \to G$.  If each $\psi \in \Psi$ induces an endomorphism
  of $F(\X)/K$, Proposition~\ref{prop:NTAddSubst} shows
  the first assertion. If, on the other hand, the invariant
  $L$-presentations \mbox{$\la\X\mid\Q\mid\Phi\mid\R\ra$} and
  \mbox{$\la\X\mid\Q\mid\Phi\cup\Psi\mid\R\ra$} are $L$-presentations
  for $G$, each $\psi \in \Psi$ induces an endomorphism of $G = F(\X)/K$.
\end{proof}
Every substitution $\sigma \in \Phi$ of an invariant 
$L$-presentation $G = \la\X\mid\Q\mid\Phi\mid\R\ra$ induces an
endomorphism of $G$. However, there are possibly other endomorphisms
of the free group $F(\X)$ that will induce the same endomorphism on
$G$. The following proposition allows us to modify a given substitution
of an $L$-presentation:
\begin{proposition}\Label{prop:NTModSubst}
  Let $G = \la\X\mid\Q\mid\Phi\mid\R\ra$ be a (finitely) $L$-presented
  group,\linebreak ${\mc S} \subseteq \la \bigcup_{\varphi\in\Phi^*} \R^\varphi
  \ra^F$ be a (finite) subset, and let $\sigma \in \Phi$ be given.
  Define an endomorphism $\ti\sigma$ of the free group $F=F(\X)$ over
  the alphabet $\X$ that is induced by the map $\ti\sigma\colon x
  \mapsto x^\sigma\,r_x$ for each $x \in \X$ and some $r_x \in \S$. Then
  $\la\X\mid\Q\mid(\Phi\setminus\{\sigma\}) \cup\{\ti\sigma\}\mid \R
  \cup \S \ra$ is a (finite) $L$-presentation for $G$.
\end{proposition}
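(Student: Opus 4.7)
The plan is to carry out the replacement of $\sigma$ by $\ti\sigma$ in three successive applications of the Tietze transformations already established in this section: first I enlarge the iterated-relation set by $\S$, then I add $\ti\sigma$ as a new substitution, and finally I delete $\sigma$. The whole argument will hinge on the observation that $\ti\sigma$ and $\sigma$ differ on $F = F(\X)$ only by elements of the normal closure $\la\{r_x\mid x\in\X\}\ra^F$, which is contained in $\la\S\ra^F$ and hence in the normal closure $R = \la\bigcup_{\varphi\in\Phi^*}\R^\varphi\ra^F$ of the iterated relations.

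The first two steps will be direct applications of earlier results. Since $\S\subseteq R$ by hypothesis, Proposition~\ref{prop:NTAddItRel} gives $G \cong \la\X\mid\Q\mid\Phi\mid\R\cup\S\ra$ without altering $R$. For each $x\in\X$ one has $\ti\sigma(x)\,\sigma(x)^{-1} = \sigma(x)\,r_x\,\sigma(x)^{-1} \in \la\S\ra^F \subseteq R$, so $\ti\sigma$ and $\sigma$ induce the same endomorphism of $F/R$; since $\sigma\in\Phi$ already does, Proposition~\ref{prop:NTAddSubst} then yields $G \cong \la\X\mid\Q\mid\Phi\cup\{\ti\sigma\}\mid\R\cup\S\ra$.

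The third step is to drop $\sigma$ from the substitutions. Setting $\Phi' = (\Phi\setminus\{\sigma\})\cup\{\ti\sigma\}$ and $R'' = \la\bigcup_{\tau\in(\Phi')^*}(\R\cup\S)^\tau\ra^F$, I need $R = R''$. The inclusion $R''\subseteq R$ will be immediate: $\R$ and $\S$ lie in $R$, and each element of $\Phi'$ stabilises $R$ (elements of $\Phi\setminus\{\sigma\}$ do so automatically, and $\ti\sigma$ does so because it agrees with $\sigma$ modulo $R$). The reverse inclusion is the main obstacle and the one place where the explicit form $\ti\sigma(x) = \sigma(x)\,r_x$ is essential. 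Writing $\sigma(x) = \ti\sigma(x)\,r_x^{-1}$ and expanding $\sigma(w)$ letter-by-letter shows that $\sigma(w)\,\ti\sigma(w)^{-1}$ lies in the normal closure $\la\{r_x\mid x\in\X\}\ra^F \subseteq \la\S\ra^F \subseteq R''$; since $\ti\sigma$ stabilises $R''$ by construction, so does $\sigma$. A routine induction on the length of $\varphi\in\Phi^*$ then gives $\R^\varphi\subseteq R''$, establishing $R\subseteq R''$ and completing the proof.
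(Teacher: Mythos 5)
Your proof is correct, and its core is the same as the paper's: both arguments reduce the claim to showing that the normal closures $R = \la\bigcup_{\varphi\in\Phi^*}(\R\cup\S)^\varphi\ra^F$ and $R'' = \la\bigcup_{\tau\in((\Phi\setminus\{\sigma\})\cup\{\ti\sigma\})^*}(\R\cup\S)^\tau\ra^F$ coincide, and both hinge on the letter-by-letter expansion showing that $w^{\ti\sigma}\,(w^{\sigma})^{-1}$ lies in $\la\{r_x\mid x\in\X\}\ra^F\subseteq\la\S\ra^F$ for every $w\in F$. Where you differ is in the bookkeeping. The paper runs an explicit induction on the length of $\ti\delta\in\Psi^*$ (with $\Psi$ your $\Phi'$), tracking a correction term $h\in L=\la\bigcup_{\varphi\in\Phi^*}\S^\varphi\ra^F$ so that $g^{\ti\delta}=g^{\delta}h$ for some $\delta\in\Phi^*$, and then invokes symmetry for the reverse inclusion. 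You instead observe that it suffices to know that every element of $\Phi'$ stabilises $R$ and every element of $\Phi$ stabilises $R''$ --- both immediate from the expansion fact, since $\la\S\ra^F$ is contained in each of $R$ and $R''$ and since $\sigma$, $\ti\sigma$ agree modulo $\la\S\ra^F$ --- after which a trivial induction on the length of words in the substitutions gives both inclusions. This mutual-invariance packaging is shorter and avoids the correction-term arithmetic; your intermediate passage through $\la\X\mid\Q\mid\Phi\cup\{\ti\sigma\}\mid\R\cup\S\ra$ via Proposition~\ref{prop:NTAddSubst} is sound but becomes redundant once $R=R''$ is established directly.
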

\begin{proof}
  We work in the free group $F = F(\X)$ over the alphabet $\X$ and we
  decompose the kernel $K = \la \Q \cup \bigcup_{\varphi\in\Phi^*}
  \R^\varphi \ra^F$ of the free presentation $\pi\colon F \to G$ into the
  normal subgroups $Q = \la\Q\ra^F$ and $R = \la \bigcup_{\varphi\in\Phi^*}
  \R^\varphi \ra^F$ as above. Since $\S \subseteq \la\bigcup_{\varphi\in\Phi^*}
  \R^\varphi \ra^F$ holds, Proposition~\ref{prop:NTAddItRel} yields that
  $G = \la\X\mid\Q\mid\Phi\mid\R\ra = \la\X\mid\Q\mid\Phi\mid\R\cup\S\ra$.
  In particular, we have that $R = \la \bigcup_{\varphi\in\Phi^*}
  (\R \cup \S)^\varphi \ra^F$. Write $\Psi = (\Phi \setminus \{\sigma\}
  ) \cup \{\ti\sigma\}$. We prove this proposition by showing that the
  normal subgroups $R = \la\bigcup_{\varphi\in\Phi^*} (\R \cup \S)^\varphi
  \ra^F$ and $\ti R = \la\bigcup_{\varphi\in\Psi^*} (\R \cup \S)^\varphi
  \ra^F$ coincide. For this purpose, we prove that, for each $\ti\delta
  \in \Psi^*$ and $g \in F$, there exists $\delta \in \Phi^*$ and $h
  \in L = \la \bigcup_{\varphi \in \Phi^*} \S^\varphi \ra^F$ so that
  $g^{\ti\delta} = g^\delta h$ holds. By construction, we have that $L
  \subseteq R$. By symmetry (as we have both $x^{\ti\sigma} = x^\sigma
  r_x$ and $x^{\sigma} = x^{\ti\sigma} r_x^{-1}$) the same arguments
  will show that, for each $\delta\in\Phi^*$ and $g \in F$, there
  exists $\ti\delta \in \Psi^*$ and $h \in \ti L = \la \bigcup_{\varphi
  \in\Psi^*} \S^{\varphi } \ra^F$ so that $g^\delta = g^{\ti\delta} h$
  holds. This would yield that each normal generator $s^{\ti\delta}\in
  \ti R$, with $s \in \R \cup \S$ and $\ti\delta \in \Psi^*$, can be
  written as $s^{\ti\delta} = s^\delta h$ for some $\delta \in \Phi^*$
  and $h \in L \subseteq R$. In fact, $s^{\ti\delta}\in\ti R$ satisfies that
  $s^{\ti\delta} = s^\delta h \in R$ and thus $\ti R \subseteq R$. By
  symmetry, we would also obtain that $R \subseteq \ti R$ holds. This
  clearly proves Proposition~\ref{prop:NTModSubst}.\smallskip

  It therefore remains to prove that, for each $\ti\delta \in \Psi^*$
  and $g \in F$, there exists $\delta\in\Phi^*$ and $h\in L$ so that
  $g^{\ti\delta} = g^\delta h$ holds. Each $g \in F$ is represented
  by a finite word $w_g(x_{i_1},\ldots,x_{i_n})$ over finitely many
  generators $\{x_{i_1},\ldots,x_{i_n}\} \subseteq \X$.  Let $\ti\delta
  \in \Psi^*$ and $g \in F$ be given. We prove the assertion by induction
  on $m = \|\ti\delta\|$. If $m = 1$, then $\ti\delta \in \Psi$. Moreover, we
  either have $\ti\delta = \ti\sigma$ or $\ti\delta \neq \ti\sigma$.
  If $\ti\delta \neq \ti\sigma$ holds, then $\ti\delta \in \Phi$ and thus
  $g^{\ti\delta} = g^{\delta} h$ for some $\delta \in \Phi$ and
  $h \in L$. Otherwise, if $\ti\delta = \ti\sigma$ holds, we obtain that
  \[
    g^{\ti\sigma} = w_g(x_{i_1},\ldots,x_{i_n})^{\ti\sigma} 
    = w_g(x_{i_1}^{\ti\sigma},\ldots,x_{i_n}^{\ti\sigma}) 
    = w_g(x_{i_1}^{\sigma}\,r_{x_{i_1}},\ldots,x_{i_n}^{\sigma}\,r_{x_{i_n}}).
  \]
  Conjugation in the free group $F$ yields that the
  word $w_g(x_{i_1}^{\sigma}\,r_{x_{i_1}}, \ldots,
  x_{i_n}^{\sigma}\,r_{x_{i_n}})$ can be written as
  $w_g(x_{i_1}^{\sigma},\ldots,x_{i_n}^{\sigma}) \cdot h$ for some $h
  \in \la \S \ra^F$. Thus $g^{\ti\sigma} = g^\sigma \cdot h$ for some
  $\sigma \in \Phi$ and $h \in \la \S \ra^F \subseteq L$.\smallskip

  For an integer $m > 1$, assume that, for every $g\in F$ and $\ti\delta
  \in \Psi^*$, with $\|\ti\delta\| = m$, the image $g^{\ti\delta}\in\ti R$
  satisfies that $g^{\ti\delta} = g^\delta h$ for $\delta \in
  \Phi^*$ and some $h \in L$. Let $g \in F$ and $\ti\delta \in \Psi^*$,
  with $\|\ti\delta\| = m+1$, be given. Then there exist $\ti\omega
  \in \Psi$ and $\ti\gamma \in \Psi^*$, with $\|\ti\gamma\| = n$, so
  that $\ti\delta = \ti\gamma\,\ti\omega$ holds. By our assumption,
  there exist $\gamma \in \Phi^*$ and $h \in L$ so that $g^{\ti\gamma}
  = g^\gamma h$ holds. Thus $g^{\ti\delta} = g^{\ti\gamma\,\ti\omega}
  = (g^\gamma h)^{\ti\omega}$.  If $\ti\omega \neq \ti\sigma$ holds,
  then $\ti\omega \in \Phi$ and thus $\gamma \ti \omega \in
  \Phi^*$. Moreover, by construction, the normal subgroups $L =
  \la \bigcup_{\varphi \in \Phi^*} \S^\varphi \ra^F$ and $\ti L =
  \la \bigcup_{\varphi \in \Psi^*} \S^\varphi \ra^F$ are $\Phi^*$-
  and $\Psi^*$-invariant, respectively. Thus $h^{\ti\omega} \in L$
  if $\ti\omega \neq \ti\sigma$. Therefore, the image $g^{\ti\delta}$
  satisfies that $g^{\ti\delta} = g^{\gamma\ti\omega} h^{\ti\omega}$
  for some $\gamma\ti\omega\in\Phi^*$ and $h^{\ti\omega} \in L$.
  It suffices to consider the case $\ti\omega = \ti\sigma$.
  The elements $g^{\gamma} \in F$ and $h\in F$ are represented
  by finite words $w_{g^\gamma}(x_{j_1},\ldots,x_{j_n})$ and
  $w_h(x_{k_1},\ldots,x_{k_\ell})$, respectively. Again,
  conjugation in the free group $F$ yields that
  $w_{g^\gamma}(x_{j_1},\ldots,x_{j_n})^{\ti\sigma} =
  w_{g^\gamma}(x_{j_1}^\sigma,\ldots,x_{j_n}^\sigma)\,
  u$ and $w_h(x_{k_1},\ldots,x_{k_\ell})^{\ti\sigma} =
  w_h(x_{k_1}^\sigma,\ldots,x_{k_\ell}^\sigma)\,v$ with $u,v \in \la {\mc
  S}\ra^F$. Thus $g^{\ti\delta} = g^{\ti\gamma\ti\sigma} = (g^\gamma
  h)^{\ti\sigma} = (g^{\gamma\sigma} u)\,(h^\sigma\,v)$. In fact, we
  have that $g^{\ti\delta} = g^{\gamma\sigma} \, h'$ with $\gamma\sigma
  \in \Phi^*$ and $h' = uh^\sigma v\in L$. Thus, for every $g \in F$
  and $\ti\delta \in \Psi^*$, the image $g^{\ti\delta}$ satisfies that
  $g^{\ti\delta} = g^\delta h$ with $\delta \in \Phi^*$ and $h\in L$. By
  symmetry, as we have both $x^{\ti\sigma} = x^\sigma\,r_x$ and $x^\sigma
  = x^{\ti\sigma}\,r_x^{-1}$, the same arguments will prove that for each
  $g \in F$ and $\delta \in \Phi^*$ the image $g^\delta$ satisfies
  that $g^\delta = g^{\ti\delta} h$ with $\ti\delta \in \Psi^*$ and $h
  \in \ti L = \la \bigcup_{\varphi\in\Psi^*} \S^\varphi \ra^F$. This
  finishes our proof of Proposition~\ref{prop:NTModSubst}.
\end{proof}
As a consequence of Proposition~\ref{prop:NTModSubst}, we obtain 
the following 
\begin{corollary}
  Let $G = \la \X\mid\Q\mid\Phi\mid\R\ra$ be a finitely $L$-presented group
  and let\linebreak $\sigma \in \Phi$ be given. Then $\sigma$ induces
  an endomorphism of the invariantly finitely $L$-presented group $H =
  \la\X\mid\emptyset\mid\Phi\mid\R\ra$. If $\psi\in \End(F(\X))$
  and $\sigma$ induce the same endomorphism on $H$, then there exists a
  finite subset $\S \subseteq F(\X)$ so that $\la \X\mid\Q\mid
  (\Phi\setminus\{\sigma\})\cup\{\psi\}\mid\R \cup \S\ra$ is a 
  finite $L$-presentation for $G$.
\end{corollary}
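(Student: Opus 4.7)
The plan is to realise this corollary as a direct application of Proposition~\ref{prop:NTModSubst}. Write $F = F(\X)$ and $R = \la \bigcup_{\varphi\in\Phi^*} \R^\varphi\ra^F$, so that $H = F/R$ by definition. The hypothesis that $\psi$ and $\sigma$ induce the same endomorphism on $H$ says exactly that for every generator $x \in \X$ the elements $x^\psi$ and $x^\sigma$ are congruent modulo $R$.

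First I would set $r_x := (x^\sigma)^{-1}\,x^\psi \in R$ for each $x \in \X$, so that $x^\psi = x^\sigma\,r_x$ holds in $F$. Because $G$ is finitely $L$-presented, the alphabet $\X$ is finite, and hence $\S := \{ r_x \mid x \in \X \}$ is a finite subset of $R = \la \bigcup_{\varphi\in\Phi^*} \R^\varphi\ra^F$. This $\S$ satisfies the hypothesis of Proposition~\ref{prop:NTModSubst} on the set of conjugated iterated relations, and the endomorphism $x \mapsto x^\sigma\,r_x$ of $F$ built from $\sigma$ and these $r_x$ is precisely $\psi$ (since endomorphisms of $F$ are determined by their values on $\X$).

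Applying Proposition~\ref{prop:NTModSubst} with this data (with $\ti\sigma = \psi$) then yields immediately that
\[
  \la \X \mid \Q \mid (\Phi \setminus \{\sigma\}) \cup \{\psi\} \mid \R \cup \S \ra
\]
is a finite $L$-presentation for $G$, as desired. There is no genuine obstacle here: the only point to verify is that the $r_x$ lie in $R$ (so that $\S$ meets the hypothesis of Proposition~\ref{prop:NTModSubst}), and this is exactly the translation of the assumption that $\psi$ and $\sigma$ agree as endomorphisms of $H = F/R$; the finiteness of $\S$ is automatic from the finiteness of $\X$.
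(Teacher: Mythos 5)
Your proposal is correct and follows the same route as the paper: both define $r_x$ with $x^\psi = x^\sigma r_x$, observe that $r_x$ lies in $\la \bigcup_{\varphi\in\Phi^*}\R^\varphi\ra^F$ precisely because $\sigma$ and $\psi$ agree on $H$, take $\S = \{r_x \mid x\in\X\}$ (finite since $\X$ is finite), and invoke Proposition~\ref{prop:NTModSubst} with $\ti\sigma = \psi$. Your explicit formula $r_x = (x^\sigma)^{-1}x^\psi$ is just a concrete instance of the paper's existential statement.
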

\begin{proof}
  If $\sigma$ and $\psi$ induce the same endomorphism of $H$,
  there exists, for each $x \in \X$, an element $r_x \in \la
  \bigcup_{\sigma\in\Phi^*} \R^\sigma \ra^F$ with $x^\psi
  = x^\sigma r_x$.  Write $\S = \{ r_x \mid x \in \X\}$. Then
  Proposition~\ref{prop:NTModSubst} yields that $G = \la \X \mid \Q \mid
  (\Phi\setminus\{\sigma\})\cup\{\psi\}\mid \R\cup \S\ra$.
\end{proof}
The transformations introduced above allow us to modify a given
$L$-presentation of a group. In order to prove Tietze's theorem for
invariantly finitely $L$-presented groups, we choose the following set
of transformations:
\begin{definition}\Label{def:NTietzeTr}
  An \emph{$L$-Tietze transformation} is a transformation that 
  \begin{enumerate}\addtolength{\itemsep}{-1ex}
  \item adds or removes a single fixed relation (Proposition~\ref{prop:NTAddFxRel}),
  \item adds or removes a single iterated relation (Proposition~\ref{prop:NTAddItRel}), 
  \item adds or removes a single substitution (Proposition~\ref{prop:NTAddSubst}),
  \item adds or removes a generator together with a fixed relation (Proposition~\ref{prop:NTAddGensFx}), 
  \item adds or removes a generator together with an iterated relation (Proposition~\ref{prop:NTAddGensIt}), or that
  \item modifies a given substitution of an $L$-presentation (Proposition~\ref{prop:NTModSubst}).
  \end{enumerate}
\end{definition}

\section{Applications of Tietze Transformations}\Label{sec:NTAppsTie}
The transformations introduced in Section~\ref{sec:NTietzeTr} allow
us to prove Theorem~\ref{thm:LTietzeInv}:
\def\0{ of Theorem~\ref{thm:LTietzeInv}}
\begin{CenProof}{\0}
  We use similar ideas as in the proof of Tietze's theorem
  in~\cite[Chapter~\Rom{2}]{LS77}: As each $L$-Tietze transformation
  does not change the isomorphism type of the group, two finite
  $L$-presentations define isomorphic groups if one $L$-presentation can be
  transformed into the other by a finite sequence of $L$-Tietze
  transformations. In order to prove Theorem~\ref{thm:LTietzeInv},
  it suffices to prove that two invariant finite
  $L$-presentations which define isomorphic groups can be
  transformed into each other by a finite sequence of $L$-Tietze
  transformations.  For this purpose, we consider two invariant finite
  $L$-presentations \mbox{$\la\X_1\mid\Q_1\mid\Phi_1\mid\R_1\ra$}
  and \mbox{$\la\X_2\mid\Q_2\mid\Phi_2\mid\R_2\ra$} of a group $G$. By
  Proposition~\ref{prop:NTReplRelsInvLp}, we can assume that both $\Q_1 =
  \emptyset$ and $\Q_2 = \emptyset$ hold. We will construct an invariant
  finite $L$-presentation for $G$ which can be obtained from both $L$-presentations 
  by a finite sequence of $L$-Tietze transformations. Because each
  $L$-Tietze transformation is reversible, this shows that we can
  pass from one $L$-presentation to the other by a finite sequence of
  $L$-Tietze transformations.\smallskip

  Suppose that $\X_1 \cap \X_2 = \emptyset$ holds. For $i\in\{1,2\}$,
  we denote by $F_i = F(\X_i)$ the free group over the alphabet $\X_i$ and 
  by $\pi_i\colon F_i \to G$ the free presentation with kernel
  $\ker(\pi_i) = \la \bigcup_{\sigma\in \Phi_i^*} \R_i^\sigma \ra^{F_i}$.
  For each $x \in\X_1$, we choose $w_x \in F_2$ with $x^{\pi_1} =
  w_x^{\pi_2}$; i.e., the element $w_x \in F_2$ is a $\pi_2$-preimage
  of $x^{\pi_1} \in G$. For each $z\in \X_2$, we choose $w_z
  \in F_1$ with $z^{\pi_2} = w_z^{\pi_1}$. Define the subsets
  $S_1 = \{x^{-1} w_x \mid x\in\X_1\}$ and
  $S_2 = \{z^{-1} w_z \mid z\in\X_2\}$
  of the free group $F = F(\X_1\cup\X_2)$ over the alphabet $\X_1\cup \X_2$.
  By Proposition~\ref{prop:NTAddGensIt}, we can add the finitely many
  generators $z \in \X_2$ together with the iterated relation $z^{-1}\,w_z
  \in S_2$ if we extend each substitution $\sigma \in \Phi_1$ to the
  free group $F$ by
  \[
    \ti\sigma\colon \left\{\begin{array}{rcll}
      x &\mapsto& x^\sigma, &\textrm{for each }x \in \X_1,\\
      z &\mapsto& w_z^\sigma, &\textrm{for each } z \in \X_2. 
    \end{array}\right.
  \]
  This yields the finite $L$-presentation 
  \[
    \la\,\X_1 \cup \X_2 \mid \emptyset \mid \{ \ti\sigma \}_{\sigma \in \Phi_1} 
    \mid \R_1 \cup \{ z^{-1} w_z \}_{z \in \X_2}\,\ra 
  \]
  for the group $G$. The natural homomorphisms $\pi_1\colon F_1 \to
  G$ and $\pi_2\colon F_2 \to G$ extend to a natural homomorphism
  $\pi\colon F \to G$ that is induced by the map
  \[ 
    \pi\colon\left\{\begin{array}{rcll}
      x &\mapsto& x^{\pi_1}, &\textrm{for each }x \in \X_1,\\
      z &\mapsto& z^{\pi_2}, &\textrm{for each }z \in \X_2.
    \end{array}\right.
  \]   
  Its kernel satisfies $\ker(\pi) =
  \la\bigcup_{\sigma\in\Phi_1^*} (\R_1\cup S_2)^{\ti\sigma}\ra^F$.
  For $x\in\X_1$ and $x^{-1} w_x \in S_1$, we have $x^\pi =
  x^{\pi_1} = w_x^{\pi_2} = w_x^\pi$ and thus $x^{-1} w_x \in \ker(\pi)$ holds.
  For each $r \in \R_2$, we have $r^{\pi} = r^{\pi_2} = 1$ and thus $r\in
  \ker(\pi)$ holds. Since the kernel $\ker(\pi)$ is $\{\ti\sigma\mid \sigma \in
  \Phi_1\}^*$-invariant, by construction, Proposition~\ref{prop:NTAddItRel}
  yields that
  \[
     G \cong \left\la\,\X_1 \cup \X_2 ~\middle|~ \emptyset ~\middle|~ \{\ti\sigma\}_{
     \sigma\in\Phi_1}~\middle|~ \R_1\cup \R_2 \cup S_1 \cup S_2
     \,\right\ra.
  \]
  As the invariant finite $L$-presentations
  $\la\X_1\mid\emptyset\mid\Phi_1\mid\R_1\ra$ and
  $\la\X_2\mid\emptyset\mid\Phi_2\mid\R_2\ra$ define isomorphic groups
  and every $\psi \in \Phi_2$ induces an endomorphism of the whole group,
  we can extend $\psi$ to an endomorphism of the free group $F$ over the
  alphabet $\X_1 \cup \X_2$ that induces the same endomorphism on $G$
  as $\psi$ does. More precisely, for each $\psi \in \Phi_2$, we define
  an endomorphism of the free group $F$ that is induced by the map
  \[
   \ti\psi\colon \left\{\begin{array}{rcll}
     z &\mapsto& z^\psi,   &\textrm{for each }z \in \X_2\\
     x &\mapsto& w_x^\psi, &\textrm{for each }x \in \X_1\textrm{ and }x^{-1}w_x \in S_1.
   \end{array}\right.
  \]
  By construction, the normal subgroup $\la\bigcup_{\sigma\in
  \Phi_1^*} (\R_1\cup\R_2\cup S_1\cup S_2)^{\ti\sigma} \ra^F$ is
  $\ti\psi$-invariant.  Thus, by Proposition~\ref{prop:NTAddSubst}, the group
  $G$ satisfies that
  \begin{equation}
    G \cong \left\la\X_1 \cup \X_2 ~\middle|~ \emptyset ~\middle|~ \{\ti\sigma\}_{\sigma\in\Phi_1}
    \cup\{\ti\psi\}_{\psi\in\Phi_2}~\middle|~ \R_1\cup \R_2 \cup S_1 \cup
    S_2 \right\ra. \Label{eqn:NTTietze}
  \end{equation}
  Since the $L$-presentations $\la\X_1\mid\Q_1\mid\Phi_1\mid\R_1\ra$
  and $\la\X_2\mid\Q_2\mid\Phi_2\mid\R_2\ra$ were finite, we
  have applied only finitely many $L$-Tietze transformations
  from Definition~\ref{def:NTietzeTr}. Therefore, starting 
  with the $L$-presentation $\la\X_1\mid\Q_1\mid\Phi_1\mid\R_1\ra$ 
  we have obtained the $L$-presentation in Eq.~(\ref{eqn:NTTietze})
  after finitely many steps. 
  By symmetry, though, we would
  also obtain the finite $L$-presentation in Eq.~(\ref{eqn:NTTietze})
  if we would have started with the finite $L$-presentation
  \mbox{$\la\X_2\mid\Q_2\mid\Phi_2\mid\R_2\ra$}. Since each
  $L$-Tietze transformation is reversible, we can therefore transform the finite
  $L$-pre\-sen\-ta\-tion in Eq.(\ref{eqn:NTTietze}) to the finite
  $L$-presentation $\la\X_2\mid\Q_2\mid\Phi_2\mid\R_2\ra$. This
  yields a finite sequence of $L$-Tietze transformations
  that allows us to transform the $L$-presentation
  $\la\X_1\mid\Q_1\mid\Phi_1\mid\R_1\ra$ to the $L$-presentation
  $\la\X_2\mid\Q_2\mid\Phi_2\mid\R_2\ra$ and vice versa.
\end{CenProof}
\noindent Similarly, the Tietze transformations in Section~\ref{sec:NTietzeTr}
also allow us to prove that two arbitrary finite $L$-presentations
could be transformed into each other by a finite sequence of Tietze
transformations.\smallskip

Another application of $L$-Tietze transformations is to prove that
`being invariantly finitely $L$-presented' is an abstract property
of a group that does not depend on the generating set of the group;
that is, if a group admits an invariant finite $L$-presentation with
respect to one finite generating set, then so it does
with respect to any other finite generating set. This result
was already posed in~\cite[Proposition~2.2]{Bar03}. However,
its proof contains a gap: Consider the invariant finite $L$-presentation
\[
  \Grig \cong \la \{a,b,c,d\} \mid \{a^2,b^2,c^2,d^2,bcd\} \mid
  \{ \sigma \} \mid \{ (ad)^4, (adacac)^4 \}  \ra 
\]
from Theorem~\ref{thm:Lyseniok}, where $\sigma$ is induced by the map $a\mapsto aca$,
$b\mapsto d$, $c\mapsto b$, and $d\mapsto c$. Then $\sigma$ is a
monomorphism of the free group $F = F(\{a,b,c,d\})$. The transformations
in the proof of~\cite[Proposition~2.2]{Bar03} keep the rank of
$\im(\sigma)$ constant and therefore, they do not allow to prove that
the Grigorchuk group admits an invariant finite $L$-presentation with
generators $\{a,c,d\}$ as in Eq.~(\ref{eqn:GrigACD}). The $L$-Tietze
transformations from Section~\ref{sec:NTietzeTr} allow us to address this gap:
\def\0{Theorem~\ref{thm:AbstProp}}
\begin{CenProof}{ of \0}
  Let ${\mc Y} = \{y_1,\ldots,y_n\}$ be an arbitrary finite generating
  set of the invariantly finitely $L$-presented group $G = \la \X \mid
  \Q \mid \Phi \mid \R\ra$. As $G$ is invariantly $L$-presented, we can
  assume that $\Q = \emptyset$ holds. Since ${\mc Y}$ generates $G$,
  there exists, for each $x \in \X$,  a word $w_x(y_1,\ldots,y_n)$ over the
  generators ${\mc Y}$ so that $x =_G w_x(y_1,\ldots,y_n)$ holds. 
  Since $\X = \{x_1,\ldots,x_m\}$ also generates $G$, there exists, for
  each $y \in {\mc Y}$, a word $w_y(x_1,\ldots,x_m)$ so
  that $y =_G w_y(x_1,\ldots,x_m)$ holds. Suppose that $\X \cap {\mc Y}
  = \emptyset$ holds.  For each $\sigma \in \Phi$, define
  an endomorphism $\ti\sigma$ of the free group $E$ over the alphabet
  $\X \cup {\mc Y}$ that is induced by the map
  \[
    \ti\sigma\colon \left\{\begin{array}{rcll}
      x &\mapsto& x^\sigma, &\textrm{for each } x \in \X,\\
      y &\mapsto& w_y(x_1,\ldots,x_m)^\sigma, &\textrm{for each } y \in {\mc Y}.\\
    \end{array}\right.
  \]
  Then, by Proposition~\ref{prop:NTAddGensIt}, a finite $L$-presentation for 
  the group $G$ is given by 
  \[
    \la \X \cup {\mc Y} \mid \emptyset \mid \{ \ti\sigma \}_{\sigma \in \Phi} \mid 
    \R \cup \{ y^{-1} w_y(x_1,\ldots,x_m)\}_{ y \in {\mc Y} } \ra.
  \]
  As this $L$-presentation is invariant, every $\ti\sigma$, with
  $\sigma \in \Phi$, induces an endomorphism of the group $G$. Thus,
  as $x =_G w_x(y_1,\ldots,y_n)$ holds, we have 
  $x^{\ti\sigma} =_G w_x(y_1,\ldots,y_n)^{\ti\sigma}$ for each $\sigma \in
  \Phi^*$. By Proposition~\ref{prop:NTAddItRel}, we have that
  \begin{equation}
    G \cong \la \X \cup {\mc Y} \mid \emptyset \mid \{ \ti\sigma \}_{\sigma \in \Phi} \mid 
    \R \cup \{ y^{-1} w_y\}_{ y \in {\mc Y} } \cup 
    \{x^{-1} w_x \}_{x \in \X} \ra. \Label{eqn:InvGenSet1}
  \end{equation}
  Since ${\mc Y}$ generates $H$, for each $z \in \X\cup {\mc Y}$
  and $\sigma \in \Phi$, the image $z^{\ti\sigma}$ is represented by
  a word $v_{z,\sigma}(y_1,\ldots,y_n)$ over the generators ${\mc Y}$
  so that $z^{\ti\sigma} =_G v_{z,\sigma}(y_1,\ldots,y_n)$ holds. Since
  the $L$-presentation in Eq.~(\ref{eqn:InvGenSet1}) is invariant,
  Proposition~\ref{prop:NTModSubst} applies to the relation $r = (z^{\ti\sigma})^{-1}
  v_{z,\sigma}(y_1,\ldots,y_n)$ and it shows that $G$ admits the following
  finite $L$-presentation
  \[
    \la \X\cup{\mc Y}\mid \emptyset \mid \{ \widehat\sigma \}_{\sigma \in \Phi} 
    \mid \R \cup \{x^{-1}w_x\}_{x\in\X} \cup \{y^{-1}w_y\}_{y\in{\mc Y}} \cup 
    \{ (z^{\ti\sigma})^{-1} v_{z,\sigma} \}_{z\in\X\cup{\mc Y},\sigma\in\Phi} \ra
  \]
  where the substitutions $\widehat\sigma$ are induced by the maps
  \[
    \widehat\sigma\colon z \mapsto v_{z,\sigma}(y_1,\ldots,y_n),\textrm{ for each }z\in\X\cup{\mc Y}.
  \]
  We use the iterated relations $x^{-1} w_x(y_1,\ldots,y_n)$, with $x \in \X$,
  to replace every occurrence of $x \in \X$ among the iterated relations
  \begin{equation}
    \R \cup \{ y^{-1} w_y(x_1,\ldots,x_m) \}_{y \in {\mc Y}} \cup \{
    (z^{\ti\sigma})^{-1} v_{z,\sigma}(y_1,\ldots,y_n)\}_{z\in\X\cup{\mc Y},\sigma\in\Phi
    } \Label{eqn:InvGenSet2}
  \end{equation}
  by $w_x(y_1,\ldots,y_n)$. This yields a finite set of relations
  $\ti\S$ that can be considered as a finite subset of the free
  group over the alphabet ${\mc Y}$. Replacing the relations
  in Eq.~(\ref{eqn:InvGenSet2}) by $\ti \S$ does not change the
  isomorphism type of the group. The group $G$ satisfies that $G \cong \la \X \cup {\mc Y}
  \mid \emptyset \mid \{\widehat\sigma\mid\sigma\in\Phi\}\mid \ti\S \cup \{x^{-1} w_x \mid x
  \in \X\}\ra$. By Proposition~\ref{prop:NTAddGensIt}, the group $G$
  is invariantly finitely $L$-presented by $\la {\mc Y} \mid \emptyset
  \mid \{\widehat\sigma\}_{\sigma\in\Phi} \mid \ti \S\ra$.
\end{CenProof}

\section{Finitely generated normal subgroups of finitely presented groups}
\Label{sec:FGNorOfFP}
In this section, we consider finitely generated \emph{normal} subgroups of
finitely presented groups.
By Higman's embedding theorem~\cite{Hig61}, every finitely generated
group embeds into a finitely presented group if and only if it is
recursively presented. This theorem classifies the finitely generated subgroups
of a finitely presented group. The \emph{normal}
subgroups of a finitely presented group are invariantly $L$-presented:
\begin{proposition}\Label{prop:NorSubsFpGroups}
  Every normal subgroup of a finitely presented group admits an invariant
  $L$-presentation whose substitutions induce automorphisms of the
  subgroup. If the normal subgroup has finite index, it is invariantly
  finitely $L$-presented.
\end{proposition}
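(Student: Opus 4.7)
The plan is to apply the Reidemeister--Schreier method to a finite presentation of the ambient group $G$ and then package the resulting presentation of the normal subgroup $H$ as an invariant $L$-presentation, using that conjugation by elements of $G$ acts on $H$ as automorphisms. This reproduces, in essence, the construction behind the Reidemeister--Schreier theorem for finitely $L$-presented groups referenced in~\cite{Har11b}.

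To set the stage, I would write $G = \la \X \mid \R \ra$ with $\X$ and $\R$ finite, let $\pi\colon F(\X)\to G$ be the canonical projection with kernel $N = \la \R \ra^{F(\X)}$, and let $K = \pi^{-1}(H)\unlhd F(\X)$, so that $H \cong K/N$. Fixing a Schreier transversal $T$ for $K$ in $F(\X)$, the Nielsen--Schreier theorem provides a free basis of $K$ consisting of the nontrivial Schreier generators $s_{t,x} = tx\,\overline{tx}^{-1}$ with $t \in T$ and $x \in \X$; let $F_H$ denote the corresponding free group and $\psi\colon F_H \to K$ the induced isomorphism.

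Next, for each $y \in \X \cup \X^{-1}$, I would define an endomorphism $\sigma_y$ of $F_H$ by setting $\sigma_y(s_{t,x})$ equal to the Schreier rewriting in $F_H$ of $y\,\psi(s_{t,x})\,y^{-1} \in K$; this is well defined because $K$ is normal in $F(\X)$, and $\sigma_y$ realises conjugation by $y$ under the identification $\psi$. Letting $\R_H$ denote the Schreier rewritings of the relators in $\R$ (each $r \in \R$ already lies in $N \subseteq K$), I would then claim that $H \cong \la\,\{s_{t,x}\} \mid \emptyset \mid \{\sigma_y\}_{y \in \X \cup \X^{-1}} \mid \R_H\,\ra$ is the desired $L$-presentation. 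The key point is that iterated application of the $\sigma_y$ to $\R_H$ produces, under $\psi$, all $F(\X)$-conjugates $\{g r g^{-1} : g \in F(\X),\, r \in \R\}$, and these generate $N$ as a subgroup of $F(\X)$; consequently, the normal closure in $F_H$ of the iterated relations equals $\psi^{-1}(N)$, and the quotient is $K/N \cong H$. Invariance, and the strengthening that each substitution induces an automorphism of $H$, is then immediate from $H \unlhd G$: conjugation by $y$ preserves $H$, and its inverse, conjugation by $y^{-1}$, is also among the substitutions.

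For the finite-index assertion, taking $T$ finite yields finitely many Schreier generators, and combined with the finiteness of $\X$ and $\R$ this produces a finite invariant $L$-presentation. I expect the main bookkeeping obstacle to be verifying that the monoid closure $\Phi^*$ of $\Phi = \{\sigma_y\}$ applied to $\R_H$ truly produces every $F(\X)$-conjugate of every $r \in \R$; however, this reduces to the elementary observation that each $g \in F(\X)$ factors as a word in $\X \cup \X^{-1}$ and that $\sigma_{y_1} \cdots \sigma_{y_n}$ corresponds under $\psi$ to conjugation by $y_1\cdots y_n$.
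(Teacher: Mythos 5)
Your proposal is correct and follows essentially the same route as the paper: the paper's proof consists of invoking the Reidemeister--Schreier construction of~\cite{Har11b} (restated as Lemma~\ref{lem:FirstObserv}), which rewrites the relators of a finite presentation of $G$ via the Reidemeister rewriting and takes as substitutions the endomorphisms of the free group on the Schreier generators induced by conjugation with the generators of $G$ and their inverses --- exactly your $\sigma_y$. The only cosmetic difference is that the paper fixes a generating set of $G$ adapted to $H$ (generators of $H$ plus coset representatives) and observes that the substitutions coming from conjugation by generators of $H$ may be omitted as they induce inner automorphisms.
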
 
\begin{proof}
  This follows from the proof of~\cite[Theorem~6.1]{Har11b}; cf.
  Lemma~\ref{lem:FirstObserv} below.
\end{proof}
The $L$-presentation in Lemma~\ref{lem:FirstObserv} below is an ascending
$L$-presentation with finitely many substitutions and finitely many
iterated relations. It has finitely many generators if and only if the
subgroup has finite index. The substitutions of this $L$-presentation
induce automorphisms of the subgroup since they are induced by conjugation
in the finitely presented group.\smallskip

On the other hand, as every finite $L$-presentation is recursive,
finitely $L$-presented groups embed into finitely presented groups. As
indicated in~\cite{Ben11}, a finitely $L$-presented group embeds as
a normal subgroup into a finitely presented group if we assume that
every substitution of the $L$-presentation induces an automorphism of
the subgroup:
\begin{proposition}\Label{prop:InvLpEmbAsNor}
  Every group that admits an invariant finite $L$-presentation, whose
  substitutions induce automorphisms of the group, embeds as a normal
  subgroup into a finitely presented group.
\end{proposition}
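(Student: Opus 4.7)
The plan is to realise $G$ as a normal subgroup of an HNN-style finitely presented overgroup obtained by adjoining one stable letter for each substitution. Write $G = \la \X \mid \Q \mid \Phi \mid \R\ra$ as an invariant finite $L$-presentation in which each $\sigma \in \Phi$ induces an automorphism of $G$. For every $\sigma \in \Phi$, introduce a fresh symbol $t_\sigma$ and consider
\[
  \widehat G := \left\la \X \cup \{t_\sigma\}_{\sigma\in\Phi} \;\middle|\; \Q \cup \R \cup \{t_\sigma^{-1} x\, t_\sigma \,(x^\sigma)^{-1}\}_{x \in \X,\sigma \in \Phi}\right\ra;
\]
since $\X$, $\Phi$, $\Q$ and $\R$ are all finite, this is a finite presentation of a group.

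The first task is to exhibit a natural homomorphism $\iota\colon G \to \widehat G$ sending each $x \in \X$ to itself. The fixed relations $\Q$ and the ``base'' iterated relations $\R$ are built into $\widehat G$ directly, so what remains is to verify that every iterated relation $r^\tau$ with $r \in \R$ and $\tau \in \Phi^*$ is trivial in $\widehat G$. For this I would use that the defining conjugation relations force $t_\sigma^{-1} w\, t_\sigma = w^\sigma$ for every word $w \in F(\X)$; an induction on the length of $\tau$ then reduces $r^\tau = 1$ to the already-verified base case $r = 1$.

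To prove that $\iota$ is injective, I would construct an auxiliary target. Because each $\sigma$ is an automorphism of $G$, the monoid $\Phi^*$ sits inside a genuine subgroup $\Gamma := \la \sigma : \sigma \in \Phi\ra \le \Aut(G)$, and the external semidirect product $G \rtimes \Gamma$ is well-defined. Mapping $x \mapsto (x,1)$ for $x \in \X$ and $t_\sigma$ to the element of $\{1\} \times \Gamma$ whose conjugation action on $G$ agrees with $\sigma$ (i.e.\ taking the convention that matches the defining conjugation relations of $\widehat G$) then yields a well-defined homomorphism $\pi\colon \widehat G \to G \rtimes \Gamma$; the composition $\pi \circ \iota$ is the canonical embedding $G \hookrightarrow G \rtimes \Gamma$, and hence $\iota$ is injective.

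Finally, I would show that $\iota(G)$ is normal in $\widehat G$ by verifying closure of $\iota(G)$ under conjugation by each generator and its inverse. Conjugation by $x \in \X$ is trivial, and $t_\sigma^{-1}\, \iota(G)\, t_\sigma \subseteq \iota(G)$ follows directly from $t_\sigma^{-1} x t_\sigma = x^\sigma$. The main obstacle, and the one step that genuinely requires the \emph{automorphism} hypothesis rather than just an endomorphism hypothesis, is the reverse inclusion $t_\sigma\, \iota(G)\, t_\sigma^{-1} \subseteq \iota(G)$: for each $x \in \X$ one chooses a word $v_x \in F(\X)$ representing $x^{\sigma^{-1}} \in G$; the identity $v_x^\sigma = x$ in $G$ transports along the already-established injection $\iota$ to the identity $t_\sigma^{-1} v_x t_\sigma = x$ in $\widehat G$, which rearranges to $t_\sigma x t_\sigma^{-1} = v_x \in \iota(G)$. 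This completes the proof of normality, and hence of the proposition.
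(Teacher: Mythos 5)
Your proposal is correct and builds essentially the same finitely presented overgroup as the paper, namely the multiple HNN extension $\widehat G = \la \X\cup\{t_\sigma\}_{\sigma\in\Phi}\mid \Q\cup\R\cup\{t_\sigma^{-1}xt_\sigma(x^\sigma)^{-1}\}\ra$, with the same two key verifications: the conjugation relations collapse all iterated relations $r^\tau$ onto $\R$, and the automorphism hypothesis gives the inclusion $t_\sigma\,\iota(G)\,t_\sigma^{-1}\subseteq\iota(G)$ needed for normality. The only (harmless) divergence is in proving injectivity of $G\to\widehat G$: the paper iterates the HNN embedding theorem over the stable letters $t_1,\ldots,t_n$, while you map $\widehat G$ onto the semidirect product $G\rtimes\Gamma$ with $\Gamma\le\Aut(G)$ and observe that the composite restricts to the canonical embedding of $G$ --- an equally valid and somewhat more self-contained check.
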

\begin{proof}
  If $H = \la{\mc Z} \mid \emptyset \mid \{\delta_1,\ldots,\delta_n\}
  \mid \R\ra$ is invariantly finitely $L$-presented so that each
  $\delta_i$ induces an automorphism of $H$, the base group $H$
  embeds into the HNN-extension $G_1$ relative to the isomorphism
  $\delta_1\colon H\to H$ which is induced by the substitution $\delta_1$. The
  HNN-extension $G_1$ is given by the presentation $G_1 = \la {\mc
  Z} \cup \{t_1\} \mid \bigcup_{\sigma\in\Phi^*} \R^\sigma \cup \{
  t_1^{-1} z t_1 = z^{\delta_1} \mid z\in {\mc Z} \} \ra$ where $\Phi
  = \{\delta_1,\ldots,\delta_n\}$. Denote by $H_1$ the image of $H$
  in $G_1$. Then $\delta_2$ induces an automorphism of the subgroup
  $H_1 \leq G_1$. Thus we can form the HNN-extension $G_2$ relative
  to the isomorphism $\delta_2\colon H_1 \to H_1$.  As the base group
  $G_1$ embeds into the HNN-extension $G_2$, the subgroup $H_1$ embeds
  into $G_2$ as well. Iterating this process, we obtain a group $G_n = \la {\mc Z}
  \cup \{t_1,\ldots,t_n\} \mid \bigcup_{\sigma\in\Phi^*} \R^\sigma \cup
  \{ t_i^{-1} z t_i = z^{\delta_i} \mid 1\leq i\leq n\}\ra$ in which $H$ embeds.  Tietze
  transformations that replace every $\delta_i$-image $z^{\delta_i}$
  by the conjugate $t_i^{-1} z t_i$ in the relations $\bigcup_{\sigma\in\Phi^*} \R^\sigma$ eventually show that $G_n = \la
  {\mc Z} \cup \{t_1,\ldots,t_n\} \mid \R \cup \{ t_i^{-1} z t_i =
  z^{\delta_i} \mid 1\leq i\leq n,z \in {\mc Z}\}\ra$ is finitely
  presented.  The invariantly finitely $L$-presented group $H$ embeds
  into this finitely presented group by identifying the generator in ${\mc
  Z}$. The image of $H$ in $G_n$ is obviously a normal subgroup of $G_n$.
\end{proof}
In the following, we use the constructions from~\cite{Ben11} to prove
Theorem~\ref{thm:GSplits}. Since every normal subgroup of a finitely
presented group admits an invariant $L$-presentation with finitely many
substitutions and finitely many iterated relations, it suffices
to show that the $L$-presentation in Lemma~\ref{lem:FirstObserv}
below could be transformed into an invariant finite $L$-presentation. For
this purpose, though, we need to eliminate (possibly) infinitely many
generators from the $L$-presentation and we need to modify finitely many
substitutions. However, Proposition~\ref{prop:NTModSubst} adds iterated
relations for each modification of a substitution. Hence, we need to
ensure that this process still gives a \emph{finite} $L$-presentation.
In the following, we generalize the constructions from~\cite{Ben11}:

\subsection{Preliminaries}\Label{sec:PrelFGN}
Let $G$ be a finitely
presented group and let $H\unlhd G$ be a finitely generated
normal subgroup. Then $G/H$ is finitely presented. Moreover,
if $H = \la a_1,\ldots,a_m \ra$ and $G / H = \la s_1H,\ldots,s_n
H \ra$ hold, there exists a finite presentation $
\la\,\{a_1,\ldots,a_m,s_1,\ldots,s_n\} \mid \R\,\ra$ for $G$. The proof
of~\cite[Theorem~6.1]{Har11b} yields the following
\begin{lemma}\Label{lem:FirstObserv}
  Let $\la\,\{a_1,\ldots,a_m,s_1,\ldots,s_n\}\mid \R\,\ra$ be a finite
  presentation for $G$ and write $\S = \{s_1^{\pm 1},\ldots,s_n^{\pm 1}\}$. 
  If ${\mc T}$ is a Schreier transversal for $H
  = \la a_1,\ldots,a_m \ra$ in $G$ and ${\mc Y}$ are the Schreier
  generators of $H$, then $H$ is invariantly $L$-presented by
  $$
    \la {\mc Y} \mid \emptyset \mid \{ \delta_x \mid x \in \S \} 
    \mid \R^\tau \ra
  $$
  where $\delta_x$ denotes the endomorphism of the free group $F({\mc
  Y})$ that is induced by conjugation with $x \in \S$ and $\tau$ denotes
  the Reidemeister-rewriting.
\end{lemma}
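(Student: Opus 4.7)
The plan is to interpret the given $L$-presentation as a reformulation of the classical Reidemeister--Schreier presentation of $H$ in which the (potentially infinitely many) conjugates by transversal elements $t \in \mc T$ are produced by iterating the finitely many substitutions $\{\delta_x \mid x \in \S\}$. First I would let $F = F(\{a_1,\ldots,a_m,s_1,\ldots,s_n\})$, write $N = \la \R\ra^F$ so that $G = F/N$, and let $\ti H \unlhd F$ denote the preimage of $H$. Because $\mc T$ is a Schreier transversal for $\ti H$ in $F$ and $\mc Y$ consists of the associated Schreier generators, the subgroup $\ti H$ is freely generated by $\mc Y$, so we may identify $F(\mc Y)$ with $\ti H$ and obtain $H \cong \ti H/N$. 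The classical Reidemeister--Schreier theorem then yields that, as a normal subgroup of $\ti H = F(\mc Y)$, $N$ is generated by $\{(trt^{-1})^\tau \mid t \in \mc T,\, r \in \R\}$.

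Next I would verify the compatibility identity $(sws^{-1})^\tau = \delta_s(w^\tau)$ for every $s \in \S$ and every $w \in \ti H$. Since $\ti H \unlhd F$, the element $sys^{-1}$ lies in $\ti H$ for each $y \in \mc Y$, so $\delta_s$ is a well-defined endomorphism of $F(\mc Y)$. Writing $w^\tau$ as a word $u(y_{i_1},\ldots,y_{i_k})$ in Schreier generators, the equality of the two sides reduces to the freeness of $\ti H$ on $\mc Y$: since $sws^{-1} = u(sy_{i_1}s^{-1},\ldots,sy_{i_k}s^{-1})$ holds in $F$ and the rewriting $\tau$ is uniquely determined on $\ti H$, we obtain $(sws^{-1})^\tau = u(\delta_s(y_{i_1}),\ldots,\delta_s(y_{i_k})) = \delta_s(w^\tau)$. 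Iterating, for any word $t = s_{i_\ell}\cdots s_{i_1}$ in $\S$ and any $r \in \R$, the iterated image $(r^\tau)^{\delta_{s_{i_1}}\cdots\delta_{s_{i_\ell}}}$ coincides with $(trt^{-1})^\tau$.

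Combining these observations, the iterated relations of the proposed $L$-presentation are precisely the elements $(trt^{-1})^\tau$ as $t$ ranges over all words in $\S$ and $r$ over $\R$. Since a Schreier transversal $\mc T$ can be taken to consist of words in $\S$, this set contains the Reidemeister--Schreier relations and is itself contained in $N$, so all three sets have the same normal closure $N$ inside $F(\mc Y)$. Hence the $L$-presentation defines $H$. For invariance, each $\delta_s$ induces conjugation by the image of $s$ in $G$ on the quotient $F(\mc Y)/N = H$, and this is an automorphism of $H$ because $H \unlhd G$; finally, $\mc Y$ is finite precisely when $[G:H] < \infty$. The one delicate point is the compatibility identity; I would prove it carefully by appealing to the freeness of $\ti H$ on $\mc Y$, so that the rewriting $\tau$ behaves as a genuine function rather than an equivalence class of rewriting procedures.
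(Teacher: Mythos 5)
Your argument is correct and follows the same route the paper takes: the paper's proof of this lemma is a two-line citation of the Reidemeister--Schreier theorem and of the proof of Theorem~6.1 in the reference on the Reidemeister--Schreier theorem for finitely $L$-presented groups, and what you have written is precisely the worked-out version of that citation (free generation of the preimage $\ti H$ on $\mc Y$, the compatibility $(sws^{-1})^\tau = \delta_s(w^\tau)$, and the observation that conjugates by $\S^*$-words and by transversal elements have the same normal closure $N$ in $F(\mc Y)$). Your care with the compatibility identity and with the fact that the transversal may be assumed to lie in $\S^*$ fills in exactly the details the paper leaves to the references.
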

\begin{proof}
  This follows from the Reidemeister-Schreier
  theorem, see~\cite[Section~II.4]{LS77} and the proof
  of~\cite[Theorem~6.1]{Har11b}. Clearly, one can always omit the
  endomorphisms $\delta_x$ with $x \in \{a_1,\ldots,a_m\}$ as they give
  inner automorphisms of the subgroup $H$.
\end{proof}
Since $\S$ and $\R$ are finite, the $L$-presentation in
Lemma~\ref{lem:FirstObserv} is finite if and only if $H$ has finite
index in $G$; in this case ${\mc Y}$ is finite. Finite index subgroups
of finitely $L$-presented groups have been studied in~\cite{Har11b}. It
was shown that each normal subgroup of a finitely presented group with
finite index is invariantly finitely $L$-presented. In the following,
we therefore assume that $[G:H] = \infty$ holds.\smallskip

The strategy in the proof of Theorem~\ref{thm:GSplits} will be as follows:
Our choice of the generating set of the finitely presented group allows
us to assume that $H$'s generators ${\mc Z} = \{a_1,\ldots,a_m\}$ are
Schreier generators of $H$. We therefore obtain an embedding $\chi\colon
F({\mc Z}) \to F({\mc Y})$ and we will construct an epimorphism
$\gamma\colon F({\mc Y}) \to F({\mc Z})$ so that the free presentation
$\pi\colon F({\mc Y}) \to H$ that is given by the $L$-presentation in
Lemma~\ref{lem:FirstObserv} satisfies $\gamma\chi\pi = \pi$. Since
the $L$-presentation in Lemma~\ref{lem:FirstObserv} is invariant,
there exists, for each $\sigma \in \Phi = \{\delta_x \mid x\in \S\}$,
an endomorphism $\widehat\sigma \in \End(H)$ so that $\sigma\pi =
\pi\widehat\sigma$ holds. In general, we cannot assume that there
also exists an endomorphism $\ti\sigma \in \End(F({\mc Z}))$ so that
$\sigma\gamma = \gamma\ti\sigma$ holds. Therefore, we will construct a
normal subgroup $N \unlhd F({\mc Z})$ so that $\psi \colon F({\mc Z}) \to
F({\mc Z}) / N,\: g \mapsto gN$ yields the existence of $\bar\sigma \in
\End(F({\mc Z})/N)$ with $\sigma\gamma\psi = \gamma\psi \bar\sigma$. These
constructions will give the following commutative diagram:
\[
  \xymatrix{ 
  & F({\mc Z}) \ar[r]^\psi \ar[d]^{\chi\pi} & F({\mc Z}) / N \ar[dl] \ar@(ul,ur)^{\bar \delta_x} \\
  F({\mc Y}) \ar@(dl,dr)_{\delta_x} \ar[ur]^\gamma \ar[r]^\pi & H \ar@(dl,dr)_{\widehat\delta_x} & 
  }
\]
In the special cases of Theorem~\ref{thm:GSplits} and
Theorem~\ref{thm:TorRk2}, we are able to prove that $F({\mc Z}) / N$
is invariantly finitely $L$-presented and so is the subgroup $H$.
The normal subgroup $N$ will be generated, as a normal subgroup,
by the iterated relations that Proposition~\ref{prop:NTModSubst}
adds when modifying the substitutions of the $L$-presentation
in Lemma~\ref{lem:FirstObserv}. These relations were omitted
in~\cite{Ben11}. It is not clear whether or not these relations are
necessary to define the subgroup $H$.\medskip

In the remainder of this section, we generalize the constructions from~\cite{Ben11} to obtain the commutative diagram above.
The generating set $\X = \{a_1,\ldots,a_m,s_1,\ldots,s_n\}$ of the
finitely presented group $G$ yields that the generators ${\mc Z}
= \{a_1,\ldots,a_m\}$ are Schreier generators of $H$.  Hence, there
exists a natural embedding $\chi\colon F({\mc Z}) \to F({\mc Y})$ which is
induced by embedding the generators ${\mc Z}$ into ${\mc Y}$. 
It suffices to remove the Schreier generators
${\mc Y} \setminus {\mc Z}$ from the invariant $L$-presentation in
Lemma~\ref{lem:FirstObserv}.
Since $H$ is generated by ${\mc Z} = \{a_1,\ldots,a_m\}$, every $y \in
{\mc Y}$ can be represented, as an element of $H$, by a word over ${\mc
Z}$. This yields an epimorphism $\gamma\colon F({\mc Y}) \to F({\mc Z})$
which maps every $y \in {\mc Y}$ to a word $y^\gamma \in F({\mc Z})$
over the alphabet ${\mc Z}$ that represents the same element in $H$;
i.e., we have 
\begin{equation}\Label{eqn:IotaPi}
  \{ y^{-1} y^{\gamma\chi} \mid y \in {\mc Y} \setminus {\mc Z} \} 
  \subseteq \ker(\pi),
\end{equation}
where $\pi\colon F({\mc Y}) \to H$ denotes the free presentation
from Lemma~\ref{lem:FirstObserv}.  Note that Eq.~(\ref{eqn:IotaPi})
yields that $\iota = \chi\pi$ defines an epimorphism $\iota\colon
F({\mc Z}) \to H$ with $\gamma\iota = \pi$. The following lemma
generalizes~\cite[Lemma~4]{Ben11}.
\begin{lemma}\Label{lem:EpiPres}
  If $H \cong \la {\mc Y} \mid {\mc S} \ra$ and $\gamma\colon F({\mc Y})
  \to F({\mc Z})$ is an epimorphism so that
  \[
    \xymatrix{& F({\mc Z}) \ar[d]^\iota \\
    	      F({\mc Y}) \ar[ur]^\gamma \ar[r]_\pi & H }
  \]
  commutes, $\la{\mc Z} \mid \S^\gamma \ra$ is a presentation for $H$.
\end{lemma}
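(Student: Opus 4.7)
My plan is to verify that $\iota \colon F(\mc Z) \to H$ is surjective and that its kernel equals $\langle \S^\gamma \rangle^{F(\mc Z)}$; together these imply the presentation $H \cong \la \mc Z \mid \S^\gamma \ra$.

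First I would observe that $\iota$ is surjective: since $\pi = \gamma\iota$ is surjective by hypothesis, so is $\iota$. Next, for the easy inclusion $\langle \S^\gamma \rangle^{F(\mc Z)} \subseteq \ker(\iota)$, I would simply take $s \in \S$ and compute $(s^\gamma)^\iota = s^{\gamma\iota} = s^\pi = 1$, because $s$ is a relator of the presentation $H \cong \la \mc Y \mid \S \ra$. Since $\ker(\iota)$ is a normal subgroup of $F(\mc Z)$ containing $\S^\gamma$, it contains the normal closure.

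The reverse inclusion is where the hypothesis on $\gamma$ enters seriously. Let $z \in \ker(\iota)$. Because $\gamma$ is an epimorphism, we can lift $z$ to some $y \in F(\mc Y)$ with $y^\gamma = z$. Then $y^\pi = y^{\gamma\iota} = z^\iota = 1$, so $y \in \ker(\pi) = \langle \S \rangle^{F(\mc Y)}$. Applying $\gamma$ and using that homomorphisms carry normal closures into normal closures, I conclude $z = y^\gamma \in \gamma\bigl( \langle \S \rangle^{F(\mc Y)} \bigr) \subseteq \langle \S^\gamma \rangle^{F(\mc Z)}$. (In fact, since $\gamma$ is surjective, equality holds, but only the inclusion is needed here.)

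There is no real obstacle: the entire argument is a short diagram chase that uses surjectivity of $\gamma$ once, to lift an arbitrary element of $\ker(\iota)$ back to $F(\mc Y)$ where the known presentation of $H$ can be applied. The only point to state carefully is that $\gamma$ maps the normal closure of $\S$ into the normal closure of $\S^\gamma$, which is immediate from $\gamma$ being a group homomorphism. Combining the two inclusions gives $\ker(\iota) = \langle \S^\gamma \rangle^{F(\mc Z)}$, and together with surjectivity of $\iota$ this proves the lemma.
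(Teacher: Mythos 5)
Your proof is correct and follows essentially the same diagram chase as the paper: surjectivity of $\iota$ from $\pi = \gamma\iota$, the easy inclusion by evaluating $\S^\gamma$ under $\iota$, and the reverse inclusion by lifting an element of $\ker(\iota)$ along the surjection $\gamma$ back to $\ker(\pi) = \la\S\ra^{F({\mc Y})}$. Your explicit remark that $\gamma$ carries the normal closure of $\S$ into that of $\S^\gamma$ is a welcome clarification of a step the paper leaves implicit.
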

\begin{proof}
  Since $\pi = \gamma\iota$ is onto, it suffices to prove that
  $\ker(\iota) = \la \S^\gamma \ra^{F({\mc Z})}$ holds. For $r \in
  \S$, we have that $r^{\gamma\iota} = r^\pi = 1$ and so $r^\gamma
  \in \ker(\iota)$. Thus $\la \S^\gamma \ra^{F({\mc Z})} \subseteq
  \ker(\iota)$. If $g \in \ker(\iota)$ holds, there exists $h \in F({\mc
  Y})$ with $h^\gamma = g$ as $\gamma$ is surjective. Then $h^\pi =
  h^{\gamma\iota} = g^\iota = 1$ and $h \in \ker(\pi) = \la \S \ra^{F({\mc
  Z})}$. Thus $g = h^\gamma \in \la \S^\gamma \ra^{F({\mc Z})}$.
\end{proof}
Thus, by Lemma~\ref{lem:FirstObserv} and Lemma~\ref{lem:EpiPres}, the
subgroup $H$ has a presentation of the form
\[
  H =  \big\la {\mc Z} \:\big|\: 
  \{ (r^{\tau \sigma})^\gamma \mid r\in \R,\: \sigma \in  \Phi^* \} \big\ra
\]
where $\Phi = \{ \delta_x \mid x \in \S \}$ and $\tau$ denotes the
Reidemeister rewriting. This presentation can be considered as a
finite $L$-presentation if, for each $\sigma \in \Phi$, there exists
an endomorphism $\ti\sigma \in \End(F({\mc Z}))$ with $\sigma\gamma =
\gamma\ti\sigma$. The following lemma yields the existence of such
endomorphisms $\ti\sigma \in \End(F({\mc Z}))$:
\begin{lemma}\Label{lem:ComLem}
  For groups $L$ and $M$, an epimorphism $\pi \colon L \to M$, and an
  endomorphism $\delta \in \End(L)$, there exists a (unique) endomorphism
  $\Delta \in \End(M)$ with $\delta\pi = \pi\Delta$
  if and only if $\ker(\pi)^\delta \subseteq \ker(\pi)$ holds.
\end{lemma}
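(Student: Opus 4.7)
The plan is to prove the two directions separately, treating the statement as the standard universal property of a quotient, adapted to the paper's right-action notation for homomorphisms (so $\delta\pi$ means first apply $\delta$, then $\pi$).

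For the ``only if'' direction, I would simply chase an element of $\ker(\pi)$ through the identity $\delta\pi = \pi\Delta$. If $k \in \ker(\pi)$, then $k^{\delta\pi} = k^{\pi\Delta} = 1^\Delta = 1$, which shows $k^\delta \in \ker(\pi)$, hence $\ker(\pi)^\delta \subseteq \ker(\pi)$.

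For the ``if'' direction, I would construct $\Delta$ by defining $m^\Delta = l^{\delta\pi}$ whenever $m = l^\pi$; this makes sense because $\pi$ is surjective. The only nontrivial step is well-definedness: if $l_1^\pi = l_2^\pi$, then $l_1 l_2^{-1} \in \ker(\pi)$, so by hypothesis $(l_1 l_2^{-1})^\delta \in \ker(\pi)$, and therefore $l_1^{\delta\pi} = l_2^{\delta\pi}$. Once $\Delta$ is well-defined as a map, it is immediately a homomorphism: for $m_i = l_i^\pi$ we have $(m_1 m_2)^\Delta = (l_1 l_2)^{\delta\pi} = l_1^{\delta\pi} l_2^{\delta\pi} = m_1^\Delta m_2^\Delta$, using that both $\delta$ and $\pi$ are homomorphisms. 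By construction $\delta\pi = \pi\Delta$.

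Uniqueness of $\Delta$ is forced by the surjectivity of $\pi$: if $\Delta'$ also satisfies $\delta\pi = \pi\Delta'$, then for $m = l^\pi$ we get $m^{\Delta'} = l^{\pi\Delta'} = l^{\delta\pi} = l^{\pi\Delta} = m^\Delta$. I do not expect any real obstacle here; the only point that requires care is to invoke the hypothesis $\ker(\pi)^\delta \subseteq \ker(\pi)$ at exactly the place where well-definedness of $\Delta$ is checked, and to keep the right-action notation consistent throughout the computation.
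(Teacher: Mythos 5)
Your proof is correct and is exactly the standard argument the paper has in mind; the paper itself only states that the proof is straightforward and omits the details. You correctly place the hypothesis $\ker(\pi)^\delta \subseteq \ker(\pi)$ at the well-definedness step and handle the right-action notation consistently, so nothing is missing.
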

\begin{proof}
  The proof is straightforward.
\end{proof}
Therefore, if the kernel $\ker(\gamma)$ is $\sigma$-invariant, for
each $\sigma\in\Phi$, the subgroup $H$ would be invariantly finitely
$L$-presented by $\la {\mc Z} \mid \emptyset \mid \{\ti\delta_x \mid
\delta_x\in\Phi\} \mid \R^{\tau\gamma} \ra$.  In general, though, we
cannot assume that each $\sigma\in\Phi$ leaves the kernel $\ker(\gamma)$
invariant. If we consider the natural embedding $\chi\colon F({\mc Z})
\to F({\mc Y})$ that is induced by embedding the generators ${\mc Z}$
into ${\mc Y}$, the kernel $\ker(\gamma)$ satisfies
\begin{lemma}\Label{lem:ChiGamma}
  If $\chi\colon F({\mc Z}) \to F({\mc Y})$ is an embedding with
  $\gamma\chi|_{{\mc Z}} = \id_{{\mc Z}}$, then $\chi\gamma = \id_{F({\mc Z})}$
  and $\ker(\gamma) = \la \{ y^{-1} y^{\gamma\chi} \mid y\in {\mc Y}
  \setminus {\mc Z} \} \ra^{F({\mc Y})}$ hold.
\end{lemma}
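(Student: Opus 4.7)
The plan is to prove the two assertions in turn, reducing each to an easy verification on generators and exploiting the universal property of free groups. For the first identity $\chi\gamma = \id_{F({\mc Z})}$, note that both sides are endomorphisms of the free group $F({\mc Z})$, so it suffices to check that they agree on the free generating set ${\mc Z}$. Since $\chi$ is the inclusion extending ${\mc Z}\subseteq {\mc Y}$, one has $z^\chi = z$ in $F({\mc Y})$ for every $z\in {\mc Z}$. The hypothesis $\gamma\chi|_{{\mc Z}} = \id_{{\mc Z}}$ says $(z^\gamma)^\chi = z$ in $F({\mc Y})$; injectivity of $\chi$ then forces $z^\gamma = z$ in $F({\mc Z})$. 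Combining these two observations gives $z^{\chi\gamma} = z$ for every $z\in{\mc Z}$, which is all that is needed.

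Write $N = \la \{y^{-1} y^{\gamma\chi} \mid y\in {\mc Y}\setminus {\mc Z}\} \ra^{F({\mc Y})}$ for the right-hand normal closure. The inclusion $N\subseteq\ker(\gamma)$ is a direct computation using the first part: for each generator $y\in {\mc Y}\setminus {\mc Z}$ of $N$,
\[
  (y^{-1} y^{\gamma\chi})^\gamma = (y^\gamma)^{-1} \cdot (y^\gamma)^{\chi\gamma} = (y^\gamma)^{-1} y^\gamma = 1,
\]
and $\ker(\gamma)$ is normal in $F({\mc Y})$.

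For the reverse inclusion, the plan is to exhibit $\gamma$ as inducing an isomorphism $\bar\gamma\colon F({\mc Y})/N \to F({\mc Z})$; injectivity of $\bar\gamma$ will immediately yield $\ker(\gamma)\subseteq N$. Since $N\subseteq\ker(\gamma)$, the map $\bar\gamma$ is a well-defined epimorphism. Composing $\chi$ with the canonical projection $F({\mc Y})\twoheadrightarrow F({\mc Y})/N$ gives a homomorphism $\bar\chi\colon F({\mc Z})\to F({\mc Y})/N$, and I would verify that $\bar\chi$ and $\bar\gamma$ are mutually inverse: the identity $\bar\chi\bar\gamma = \chi\gamma = \id_{F({\mc Z})}$ is the first part of the lemma, while $\bar\gamma\bar\chi = \id_{F({\mc Y})/N}$ again reduces to a check on generators $yN$ with $y\in {\mc Y}$. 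For $y\in {\mc Y}\setminus {\mc Z}$, the defining relation $y^{-1}y^{\gamma\chi}\in N$ gives $(yN)^{\bar\gamma\bar\chi} = y^{\gamma\chi}N = yN$; for $y\in {\mc Z}$, the first part supplies $y^{\gamma\chi} = y$, so the same equality holds.

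I expect no serious obstacle; the content is essentially bookkeeping. The only subtlety is remaining vigilant about the left-to-right composition convention (so that $\gamma\chi$ and $\chi\gamma$ are kept as distinct maps on distinct free groups) and about whether a given symbol $z\in {\mc Z}$ is being read inside $F({\mc Z})$ or inside $F({\mc Y})$ via $\chi$. Once these conventions are fixed, every step above is either a one-line calculation or the universal property of a free group.
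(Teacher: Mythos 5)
Your proof is correct; the first half (that $\chi\gamma=\id_{F({\mc Z})}$, and hence $N\subseteq\ker(\gamma)$) coincides with the paper's argument up to whether one checks equality on the free generators or on a general element and then cancels $\chi$ by injectivity. Where you genuinely diverge is the containment $\ker(\gamma)\subseteq N$. The paper argues directly with words: given $g\in\ker(\gamma)$, it rewrites $g$ modulo $N$ into a word over ${\mc Z}$ (replacing each letter $y\in{\mc Y}\setminus{\mc Z}$ by $y^{\gamma\chi}$), applies $\gamma\chi$ to conclude that the rewritten word is already trivial in $F({\mc Y})$, and then rewrites back modulo $N$ to obtain $g\in N$. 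You package exactly this rewriting into the universal property: the induced maps $\bar\gamma\colon F({\mc Y})/N\to F({\mc Z})$ and $\bar\chi\colon F({\mc Z})\to F({\mc Y})/N$ are mutually inverse because their composites are the identity on generators --- using $\chi\gamma=\id_{F({\mc Z})}$ in one direction and the defining relators $y^{-1}y^{\gamma\chi}$ of $N$ in the other --- and injectivity of $\bar\gamma$ then gives $\ker(\gamma)=N$ in one stroke. Your version avoids manipulating explicit words and is, if anything, cleaner; the paper's version makes the same content visible as a concrete rewriting process. One small remark: both your argument and the paper's use that $\chi$ is the \emph{natural} embedding (so that $z^\chi=z$ for $z\in{\mc Z}$, respectively $\im(\chi)\subseteq\la{\mc Z}\ra\leq F({\mc Y})$), which is slightly more than the bare hypothesis ``$\chi$ is an embedding with $\gamma\chi|_{{\mc Z}}=\id_{{\mc Z}}$'' states, but is exactly the situation in which the lemma is applied, so this is not a defect.
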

\begin{proof}
  Since $\gamma\chi |_{\mc Z} = \id_{\mc Z}$ holds, the map $\gamma\chi$
  induces the identity on the free subgroup $E = \la {\mc Z} \ra \leq
  F({\mc Y})$. For $g \in F({\mc Z})$, we have $g^\chi \in E$ and
  $g^{\chi\gamma\chi} = g^\chi$. Thus $(g^{-1} g^{\chi\gamma})^\chi =
  1$ and, as $\chi$ is injective, we have $g^{-1} g^{\chi\gamma} = 1$ or
  \begin{equation} \Label{eqn:ChiGammaId}
    \chi\gamma = \id_{F({\mc Z})}.
  \end{equation}
  For each $y\in {\mc Y}\setminus {\mc
  Z}$, we have that $(y^{-1} y^{\gamma\chi})^\gamma = y^{-\gamma}
  y^{\gamma\chi\gamma} = y^{-\gamma} y^\gamma = 1$. Therefore $N =
  \la \{ y^{-1} y^{\gamma\chi} \mid y\in {\mc Y} \setminus {\mc Z} \}
  \ra^{F({\mc Y})}$ satisfies that $N \subseteq \ker(\gamma)$.  Let $g
  \in \ker(\gamma)$ be given. Then $g \in F({\mc Y})$ is represented by
  a finite word $w(y_{i_1},\ldots,y_{i_n},a_1,\ldots,a_m)$ with 
  $\{y_{i_1},\ldots,y_{i_n}\} \subseteq {\mc Y} \setminus {\mc Z}$.
  Modulo the normal subgroup $N$, we can replace every
  occurrence of $y \in {\mc Y} \setminus {\mc Z}$ by $y^{\gamma\chi}
  \in E$; i.e., we have $g = w(y_{i_1},\ldots,y_{i_n},a_1,\ldots,a_m) =
  w(y_{i_1}^{\gamma\chi},\ldots,y_{i_n}^{\gamma\chi},a_1,\ldots,a_m) \cdot h$
  for some $h \in N$. As $g \in \ker(\gamma)$ and $h \in N \subseteq
  \ker(\gamma)$ hold, we have 
  \[
    1 = g^{\gamma\chi}
    = w(y_{i_1}^{\gamma\chi\gamma\chi},\ldots,y_{i_n}^{\gamma\chi\gamma\chi},a_1^{\gamma\chi},\ldots,a_m^{\gamma\chi}) \cdot h^{\gamma\chi} 
    = w(y_{i_1}^{\gamma\chi},\ldots,y_{i_n}^{\gamma\chi},a_1,\ldots,a_m) \cdot 1.
  \]
  Similarly, modulo the normal subgroup $N$, we can replace every occurrence
  of $y^{\gamma\chi}$ by $y$. There exists $k \in N$ with $1 =
  w(y_{i_1}^{\gamma\chi},\ldots,y_{i_n}^{\gamma\chi},a_1,\ldots,a_m) =
  w(y_{i_1},\ldots,y_{i_n},a_1,\ldots,a_m)\cdot\nolinebreak k = g \cdot k$. Thus
  $g \in N$ and $N = \ker(\gamma)$.
\end{proof}
Even though $\delta_x \in \Phi$ may not translate directly to $\ti\delta_x
\in \End(F({\mc Z}))$, there exists a normal subgroup $N \unlhd F({\mc
Z})$ and a homomorphism $\psi\colon F({\mc Z}) \to F({\mc
Z}) / N, \: g\mapsto gN$ so that $\ker(\gamma\psi)^{\delta_x} \subseteq
\ker(\gamma\psi)$ holds: For each $\delta_x \in \Phi$, define $\ti\delta_x
= \chi\delta_x\gamma \in \End(F({\mc Z}))$.  Consider the normal subgroup
\begin{equation}\Label{eqn:NormSubFac}
  N = \Big\la 
  \bigcup_{\sigma \in \ti\Phi^* } \Big(
  \big\{ (y^{-1} y^{\gamma\chi})^{\delta_x\gamma} \big\}_{
          y \in {\mc Y} \setminus {\mc Z}, x \in \S}\Big)^\sigma
  \Big\ra^{F({\mc Z})}
\end{equation}
where $\ti\Phi = \{ \ti\delta_x \mid \delta_x \in \Phi \}$. By
construction, $N$ satisfies $N^{\ti\delta_x} \subseteq N$ and thus there
exists a unique endomorphism $\bar\delta_x\colon F({\mc Z}) / N \to
F({\mc Z}) / N, \: gN \mapsto g^{\ti\delta_x} N$ with $\ti\delta_x \psi
= \psi \bar\delta_x$. The normal subgroup $N$ allows us to translate
$\delta_x \in \Phi$ to $\bar\delta_x \in \End(F({\mc Z})/N)$ with
$\delta_x \gamma\psi = \gamma\psi \bar\delta_x$:
\begin{lemma}
  For each $x \in \S$, we have that
  $\ker(\gamma\psi)^{\delta_x} \subseteq \ker(\gamma\psi)$.
\end{lemma}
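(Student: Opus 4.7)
The plan is to unwind what $g \in \ker(\gamma\psi)$ means and then reduce the claim to two facts already established: the identity $\chi\gamma = \id_{F({\mc Z})}$ from Lemma~\ref{lem:ChiGamma} (Eq.~(\ref{eqn:ChiGammaId})), and the $\ti\delta_x$-invariance $N^{\ti\delta_x} \subseteq N$ that is built into the definition of $N$ in Eq.~(\ref{eqn:NormSubFac}). Concretely, since $\psi$ is the projection $F({\mc Z}) \to F({\mc Z})/N$, the condition $g \in \ker(\gamma\psi)$ is equivalent to $g^\gamma \in N$, and what must be shown is $g^{\delta_x\gamma} \in N$.

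The key idea is to measure the failure of $\delta_x$ to descend to $F({\mc Z})$ by introducing the element $h = g^{-1} g^{\gamma\chi} \in F({\mc Y})$. Using $\chi\gamma = \id_{F({\mc Z})}$, one immediately checks $h^\gamma = g^{-\gamma} g^{\gamma\chi\gamma} = g^{-\gamma} g^\gamma = 1$, so $h \in \ker(\gamma)$. Lemma~\ref{lem:ChiGamma} then expresses $h$ as a product of $F({\mc Y})$-conjugates of elements of the form $y^{-1} y^{\gamma\chi}$ with $y \in {\mc Y} \setminus {\mc Z}$. Applying $\delta_x\gamma$ to this expression, each factor $(y^{-1} y^{\gamma\chi})^{\delta_x\gamma}$ appears among the normal generators of $N$ listed in Eq.~(\ref{eqn:NormSubFac}), and the conjugating elements land in $F({\mc Z})$; since $N$ is normal in $F({\mc Z})$, this gives $h^{\delta_x\gamma} \in N$.

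Finally I would combine the two pieces. From $h = g^{-1} g^{\gamma\chi}$ one computes $h^{\delta_x\gamma} = g^{-\delta_x\gamma} g^{\gamma\chi\delta_x\gamma}$, hence
\[
g^{\delta_x\gamma} \;=\; g^{\gamma\chi\delta_x\gamma} \cdot \bigl(h^{\delta_x\gamma}\bigr)^{-1}.
\]
The second factor lies in $N$ by the previous step. For the first, observe that $g^{\gamma\chi\delta_x\gamma} = (g^\gamma)^{\chi\delta_x\gamma} = (g^\gamma)^{\ti\delta_x}$ by the very definition $\ti\delta_x = \chi\delta_x\gamma$; since $g^\gamma \in N$ by hypothesis and $N^{\ti\delta_x} \subseteq N$ by construction, this factor also lies in $N$. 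Therefore $g^{\delta_x\gamma} \in N$, i.e.\ $g^{\delta_x} \in \ker(\gamma\psi)$, as required. The main obstacle is purely bookkeeping: one must carefully track which maps act on $F({\mc Y})$ and which on $F({\mc Z})$, and recognize that $N$ was defined exactly so that the obstruction $h^{\delta_x\gamma}$ to transporting $\delta_x$ across $\gamma$ is absorbed; once this is seen, the argument itself is short and essentially formal.
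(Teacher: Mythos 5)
Your argument is correct and is essentially the paper's proof in elementwise form: your factorization $g^{\delta_x\gamma} = (g^\gamma)^{\ti\delta_x}\cdot\bigl(h^{\delta_x\gamma}\bigr)^{-1}$ with $h = g^{-1}g^{\gamma\chi}\in\ker(\gamma)$ is precisely the decomposition $\ker(\gamma\psi)=\chi(N)\,\ker(\gamma)$ that the paper encodes by writing down the normal generators $y^{-1}y^{\gamma\chi}$ and $(y^{-1}y^{\gamma\chi})^{\delta_z\gamma\ti\sigma\chi}$ of $\ker(\gamma\psi)$ and checking their images under $\delta_x\gamma$. Both versions rest on the same two facts, namely that the elements $(y^{-1}y^{\gamma\chi})^{\delta_x\gamma}$ are among the defining normal generators of $N$ and that $N$ is $\ti\delta_x$-invariant by construction.
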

\begin{proof}
  The kernel $\ker(\gamma\psi) = \ker(\gamma)\,N^{\gamma^{-1}}$ satisfies that 
  \begin{eqnarray*}
    \ker(\gamma\psi) &=& \Big\la
    \Big\{ y^{-1} y^{\gamma\chi} \Big\}_{y \in {\mc Y}\setminus {\mc Z}}
    \cup \bigcup_{\ti\sigma \in \Phi^*}
    \Big\{ (y^{-1} y^{\gamma\chi})^{\delta_z\gamma\ti\sigma\chi} \Big\}_{
    y \in {\mc Y} \setminus {\mc Z} \atop z \in \S }
    \Big\ra^{F({\mc Y})}.
  \end{eqnarray*}
  The generator $(y^{-1}
  y^{\gamma\chi})^{\delta_z\gamma\ti\sigma\chi}$
  is mapped by $\delta_x\gamma$ to $(y^{-1}
  y^{\gamma\chi})^{\delta_z\gamma\ti\sigma\chi\delta_x\gamma} =
  (y^{-1} y^{\gamma\chi})^{\delta_z\gamma\ti\sigma\ti\delta_x}
  \in N$ while $y^{-1} y^{\gamma\chi}$ is mapped to $(y^{-1}
  y^{\gamma\chi})^{\delta_x\gamma} \in N$.
\end{proof}
The endomorphisms $\delta_x \in \End(F({\mc Y}))$, $\ti\delta_x \in
\End(F({\mc Z}))$, and $\bar\delta_x \in \End(F({\mc Z})/N)$ also
satisfy that
\begin{equation}
  \ti\delta_x\psi =  \chi\delta_x\gamma\psi = \chi\gamma\psi\bar\delta_x
  = \psi\bar\delta_x. 
\end{equation}
Since the $L$-presentation in Lemma~\ref{lem:FirstObserv} is invariant,
there exists $\widehat\delta_x\in \End(H)$ with $\delta_x \pi =
\pi \widehat\delta_x$. The subgroup $H$ is a homomorphic image of 
$F({\mc Z}) / N$:
\begin{lemma}\Label{lem:NorIota}
  Let $\iota\colon F({\mc Z}) \to H, g \mapsto g^{\chi\pi}$ be given.
  Then $\gamma\iota = \pi$ and $N \leq \ker(\iota)$.
\end{lemma}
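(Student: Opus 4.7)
The plan is to establish both claims by leveraging the identities $\chi\gamma = \id_{F({\mc Z})}$ from Lemma~\ref{lem:ChiGamma}, the inclusion $\{y^{-1} y^{\gamma\chi} \mid y \in {\mc Y}\setminus{\mc Z}\} \subseteq \ker(\pi)$ from Eq.~(\ref{eqn:IotaPi}), and the invariance identity $\delta_x\pi = \pi\widehat\delta_x$.

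First I would prove $\gamma\iota = \pi$. Unwinding the definition gives $\gamma\iota = \gamma\chi\pi$, so it suffices to show $g^{\gamma\chi\pi} = g^\pi$ for every $g \in F({\mc Y})$, or equivalently that $g^{-1}\,g^{\gamma\chi} \in \ker(\pi)$. Applying $\gamma$ yields $(g^{-1}\,g^{\gamma\chi})^\gamma = g^{-\gamma}\,g^{\gamma\chi\gamma} = g^{-\gamma}\,g^\gamma = 1$ by $\chi\gamma = \id_{F({\mc Z})}$, so $g^{-1}\,g^{\gamma\chi} \in \ker(\gamma)$. Lemma~\ref{lem:ChiGamma} identifies $\ker(\gamma)$ as the normal closure of $\{y^{-1} y^{\gamma\chi} \mid y \in {\mc Y}\setminus {\mc Z}\}$, and Eq.~(\ref{eqn:IotaPi}) places these normal generators inside $\ker(\pi)$; hence $\ker(\gamma) \subseteq \ker(\pi)$, which gives $g^{-1}\,g^{\gamma\chi} \in \ker(\pi)$ as required.

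Next I would show $N \le \ker(\iota)$. Consider a normal generator of the form $(y^{-1}\,y^{\gamma\chi})^{\delta_x\gamma}$ in $F({\mc Z})$. Applying $\iota = \chi\pi$ and invoking $\gamma\iota = \pi$ (just proved) gives
\begin{equation*}
  \bigl((y^{-1}\,y^{\gamma\chi})^{\delta_x\gamma}\bigr)^{\chi\pi}
  = (y^{-1}\,y^{\gamma\chi})^{\delta_x(\gamma\chi\pi)}
  = (y^{-1}\,y^{\gamma\chi})^{\delta_x\pi}
  = (y^{-1}\,y^{\gamma\chi})^{\pi\widehat\delta_x},
\end{equation*}
where the last step uses that the $L$-presentation in Lemma~\ref{lem:FirstObserv} is invariant, so $\delta_x\pi = \pi\widehat\delta_x$ for some $\widehat\delta_x \in \End(H)$. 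Since $y^{-1}\,y^{\gamma\chi} \in \ker(\pi)$ by Eq.~(\ref{eqn:IotaPi}), the right-hand side equals $1$, so every base generator of $N$ lies in $\ker(\iota)$.

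Finally I would verify that $\ker(\iota)$ is stable under every $\ti\delta_x \in \ti\Phi$, which will propagate the previous step to all elements of $\ti\Phi^*$. From $\ti\delta_x = \chi\delta_x\gamma$ and $\gamma\iota = \pi$, one computes
\begin{equation*}
  \ti\delta_x\,\iota = \chi\delta_x\gamma\chi\pi = \chi\delta_x\pi = \chi\pi\widehat\delta_x = \iota\,\widehat\delta_x,
\end{equation*}
so if $g^\iota = 1$ then $(g^{\ti\delta_x})^\iota = (g^\iota)^{\widehat\delta_x} = 1$, i.e.\ $\ker(\iota)$ is $\ti\delta_x$-invariant. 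Consequently $\ker(\iota)$ is $\ti\Phi^*$-invariant and normal in $F({\mc Z})$, and since it contains every base generator $(y^{-1}\,y^{\gamma\chi})^{\delta_x\gamma}$, it must contain the whole normal closure $N$ defined in Eq.~(\ref{eqn:NormSubFac}). The only subtlety in the argument is making sure the compositions are parsed correctly and the identity $\gamma\iota = \pi$ is available before invoking it in the second and third steps; once those are in place, the rest is routine diagram chasing.
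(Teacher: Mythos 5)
Your proposal is correct and follows essentially the same route as the paper: both rest on the identities $\chi\gamma=\id_{F({\mc Z})}$, Eq.~(\ref{eqn:IotaPi}), $\delta_x\pi=\pi\widehat\delta_x$, and the intertwining relation $\ti\delta_x\iota=\iota\widehat\delta_x$. The only (harmless) difference is organizational --- you kill the base generators of $N$ and then argue that $\ker(\iota)$ is $\ti\Phi^*$-invariant, whereas the paper directly computes $\delta_x\gamma\ti\sigma\iota=\pi\widehat\delta_x\widehat\delta_{x_1}\cdots\widehat\delta_{x_n}$ for each iterated generator; you also spell out the first assertion ($\gamma\iota=\pi$) in more detail than the paper, which merely cites Eq.~(\ref{eqn:IotaPi}).
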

\begin{proof}
  The first assertion follows from the definition of $\gamma$ in
  Eq.~(\ref{eqn:IotaPi}) above.  For $\delta_x \in \Phi$, we have $\ti\delta_x
  \iota = \chi\delta_x\gamma\iota = \chi\delta_x \pi = \chi \pi
  \widehat\delta_x = \iota \widehat\delta_x$ and $\gamma\chi\pi =
  \gamma\iota = \pi$. Thus $(y^{-1} y^{\gamma\chi})^\pi = y^{-\pi}
  y^{\gamma\chi\pi} = y^{-\pi} y^\pi  = 1$. For $\ti\sigma \in \ti\Phi^*$
  with $\ti\sigma = \ti\delta_{x_1} \cdots \ti\delta_{x_n}$ we therefore obtain
  \[
    \delta_x \gamma \ti\sigma \iota = 
    \delta_x\gamma\ti\delta_{x_1} \cdots \ti\delta_{x_n} \iota =
    \delta_x\gamma\iota \widehat\delta_{x_1} \cdots \widehat\delta_{x_n} =
    \delta_x\pi \widehat\delta_{x_1} \cdots \widehat\delta_{x_n} =
    \pi\widehat \delta_x \widehat\delta_{x_1} \cdots \widehat\delta_{x_n}.
  \]
  Hence, for each $\ti\sigma \in \Phi^*$, $y \in {\mc Y}
  \setminus {\mc Z}$, and $x\in\X$, the generator
  $(y^{-1}y^{\gamma\chi})^{\delta_x\gamma\ti\sigma} \in N$ satisfies
  $(y^{-1}y^{\gamma\chi})^{\delta_x\gamma \ti\sigma \iota} = (y^{-1}
  y^{\gamma\chi})^{\pi\widehat\delta_x\,\widehat\delta_{x_1} \cdots
  \widehat\delta_{x_n}} = 1$ as $y^{-1} y^{\gamma\chi} \in \ker(\pi)$
  holds. Therefore $N \subseteq \ker(\iota)$ holds.
\end{proof}
By Lemma~\ref{lem:NorIota}, the homomorphism $\varphi\colon F({\mc Z})
/ N \to H,\: gN \mapsto g^\iota$ is well-defined and it satisfies that
$\psi\varphi = \iota$. We have obtained the following diagram:
\[
  \xymatrix{ & F({\mc Z}) \ar@/^/[dl]^\chi \ar@(ul,ur)^{\ti\delta_x} \ar[d]^{\iota=\chi\pi} \ar[r]^\psi & F({\mc Z}) / N \ar@(ul,ur)^{\bar\delta_x} \ar[dl]^\varphi \\
  F({\mc Y}) \ar@/^/[ur]^\gamma \ar@(dl,dr)_{\delta_x} \ar[r]_\pi & H \ar@(dl,dr)_{\widehat\delta_x}& }
\]
By construction, $F({\mc Z}) / N$ is invariantly $L$-presented by
\[
  F({\mc Z}) / N \cong 
  \la {\mc Z} \mid \emptyset \mid
  \{ \ti\delta_x \}_{\delta_x\in\Phi} \mid
  \{ (y^{-1}y^{\gamma\chi})^{\delta_x\gamma} \}_{y\in {\mc Y}
  \setminus {\mc Z}, \delta_x \in \Phi} \ra.
\]
If $[G:H] = \infty$ holds, $|{\mc Y} \setminus {\mc Z}|$ is
infinite. Therefore, the latter $L$-presentation is finite if
and only if $[G:H]$ is finite. Our strategy in the proof of
Theorem~\ref{thm:GSplits} uses the following
\begin{lemma}\Label{lem:FZNFac}
  If there exists a finite set ${\mc U} \subseteq F({\mc Z})$ with
  $F({\mc Z}) / N \cong \la {\mc Z} \mid \emptyset \mid \ti\Phi \mid
  {\mc U} \ra$, then $H$ is invariantly finitely $L$-presented.
\end{lemma}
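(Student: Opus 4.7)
My plan is to assemble an invariant finite $L$-presentation for $H$ by starting from the hypothesized finite $L$-presentation of $F({\mc Z})/N$ and adjoining the set $\{r^{\tau\gamma}\mid r\in\R\}$ as additional iterated relations. Since $\ti\Phi$, $\U$, and $\R$ are all finite, the resulting $L$-presentation
\[
  \la\,{\mc Z}\,\mid\,\emptyset\,\mid\,\ti\Phi\,\mid\,\U\cup\{r^{\tau\gamma}\mid r\in\R\}\,\ra
\]
is automatically finite; the real work is to check that it defines $H$ and is invariant.

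First I would establish that $\bar\delta_x\varphi = \varphi\widehat\delta_x$ holds for each $x\in\S$. This follows from the identity $\ti\delta_x\iota = \iota\widehat\delta_x$ used in the proof of Lemma~\ref{lem:NorIota}, the defining relation $\ti\delta_x\psi = \psi\bar\delta_x$ of $\bar\delta_x$, and $\psi\varphi = \iota$, combined with the surjectivity of $\psi$. Consequently each $\bar\delta_x$ preserves $\ker(\varphi)$, which is exactly the invariance condition for the target $L$-presentation.

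Next I would identify $\ker(\varphi)\unlhd F({\mc Z})/N$ as a normal closure. Lemma~\ref{lem:EpiPres} applied to $\gamma\colon F({\mc Y})\to F({\mc Z})$ together with Lemma~\ref{lem:FirstObserv} yields $\ker(\iota) = \la\{r^{\tau\sigma\gamma}\mid r\in\R,\,\sigma\in\Phi^*\}\ra^{F({\mc Z})}$. Using the identity $\delta_x\gamma\psi = \gamma\psi\bar\delta_x$ noted before the lemma and inducting on the length of $\sigma\in\Phi^*$, I obtain $\sigma\gamma\psi = \gamma\psi\bar\sigma$, where $\bar\sigma\in\bar\Phi^*$ is obtained from $\sigma$ by replacing each $\delta_x$ with $\bar\delta_x$. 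Applying $\psi$ to the generating set of $\ker(\iota)$ then exhibits $\ker(\varphi)=\psi(\ker(\iota))$ as the normal closure in $F({\mc Z})/N$ of the $\bar\Phi^*$-orbit of the finite set $\{r^{\tau\gamma}\mid r\in\R\}$.

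Finally I would unwind normal closures in $F({\mc Z})$: the $\ti\Phi^*$-orbit of $\U$ normally generates $N$ by hypothesis, while the $\ti\Phi^*$-orbit of $\{r^{\tau\gamma}\mid r\in\R\}$ descends under $\psi$ to a normal generating set of $\ker(\varphi)$ by the previous step. Together they normally generate $\ker(\iota)$, so the displayed $L$-presentation defines $F({\mc Z})/\ker(\iota)\cong H$, and it is invariant because each $\ti\delta_x$ induces the endomorphism $\widehat\delta_x$ on $H$ via $\bar\delta_x$. The only real technical point is the identity $\sigma\gamma\psi = \gamma\psi\bar\sigma$ for all $\sigma\in\Phi^*$; once that is in place, both correctness and invariance follow by routine bookkeeping about normal closures.
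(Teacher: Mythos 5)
Your argument is correct and follows essentially the same route as the paper: the key identity $r^{\tau\sigma\gamma\psi}=r^{\tau\gamma\psi\bar\sigma}$ identifying $\ker(\varphi)$ as the normal closure of the $\bar\Phi^*$-orbit of $\R^{\tau\gamma}\psi$ is exactly what the paper's (much terser) proof uses, and you arrive at the same final $L$-presentation $\la{\mc Z}\mid\emptyset\mid\ti\Phi\mid{\mc U}\cup\R^{\tau\gamma}\ra$. Your write-up simply makes explicit the bookkeeping (the role of Lemma~\ref{lem:EpiPres}, the intertwining relations, and the invariance check) that the paper leaves implicit.
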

\begin{proof}
  The kernel of $\varphi\colon F({\mc Z}) / N \to H$ is generated by
  the images $r^{\tau\sigma\gamma\psi} = r^{\tau\gamma\psi \bar\sigma}$
  with $\sigma \in \Phi^*$ and $r\in \R$. If $\la {\mc Z} \mid \emptyset
  \mid \ti\Phi \mid {\mc U} \ra$ is an invariant finite $L$-presentation for
  $F({\mc Z}) / N$, then $H$ is invariantly finitely $L$-presented by
  $\la{\mc Z}\mid\emptyset\mid\ti\Phi\mid {\mc U} \cup \R^{\tau\gamma} \ra$.
\end{proof}

\subsection{Proofs of Theorem~\ref{thm:GSplits} and Theorem~\ref{thm:TorRk2}}\Label{sec:Proofs}
In this section, we prove Theorem~\ref{thm:GSplits} and
Theorem~\ref{thm:TorRk2}: 
\def\0{Theorem~\ref{thm:GSplits}}
\begin{CenProof}{ of \0}
  Our strategy in the proof of Theorem~\ref{thm:GSplits} is to construct a
  normal subgroup $N\unlhd F({\mc Z})$ and to prove that $F({\mc Z}) / N$
  is invariantly finitely $L$-presented.  Then Lemma~\ref{lem:FZNFac}
  applies and it shows that $H \leq G$ is invariantly finitely
  $L$-presented.\smallskip

  Since $G$ is finitely presented, $G / H$ is finitely
  generated. Moreover, as $G$ splits over $H$, there exists
  $s_1,\ldots,s_n \in G$ so that $G / H = \la s_1H,\ldots,s_n H \ra$
  and $S = \la s_1,\ldots,s_n\ra$ satisfies that $S \cap H = \{1\}$;
  i.e., $G \cong H \rtimes S$ holds. Because $H$ is finitely generated,
  there exist $a_1,\ldots,a_m \in H$ so that $H = \la a_1,\ldots,a_m
  \ra$ holds. Then $G = \la a_1,\ldots,a_m, s_1,\ldots,s_n \ra$ holds and there
  exists a finite set of relations $\R$ with $G \cong \la\,
  \{a_1,\ldots,a_m, s_1,\ldots,s_n\} \mid \R\,\ra$. Write $\S
  = \{s_1^{\pm 1},\ldots,s_n^{\pm 1}\}$ and $\X = \{a_1,\ldots,a_m,
  s_1,\ldots,s_n \}$. Clearly, we can choose a Schreier transversal
  ${\mc T} \subseteq \S^*$ whose elements are words over the alphabet
  $\S$.  This yields the Schreier generators
  \begin{eqnarray*}
    a_{\ell,t} = \gamma(t,a_\ell)
    &=& t a_\ell (\overline{t a_\ell})^{-1} = t a_{\ell} t^{-1},\\
    s_{\ell,t} = \gamma(t,s_\ell)
    &=& t s_\ell (\overline{ts_\ell})^{-1},
  \end{eqnarray*}
  with $t \in {\mc T}$. Then $\{ s_{\ell,t} \mid 1\leq\ell\leq n, t
  \in {\mc T} \} \subseteq \S^{*}$. By Lemma~\ref{lem:FirstObserv},
  the subgroup $H$ is invariantly $L$-presented by $\la{\mc Y} \mid
  \emptyset \mid \{ \delta_s \mid s \in \S \} \mid \R^\tau \ra$ where
  \[
    {\mc Y} = \{ a_{\ell,t} \mid t\in {\mc T}, 1\leq\ell\leq m \}
    \cup \{ s_{\ell,t} \neq 1\mid  t\in {\mc T}, 1\leq\ell\leq n \}
  \]
  and $\delta_s$ denotes the endomorphism of $F({\mc Y})$ that is induced
  by conjugation with $s \in \S$.  Write $S = \la s_1,\ldots,s_n \ra\leq
  F({\mc X})$ and $E = \la a_1,\ldots,a_m \ra \leq F({\mc X})$. Let
  $K \unlhd F({\mc X})$ be the kernel of $G$'s free presentation $F({\mc
  X}) \to G$. Then $E\!K = \la {\mc Y} \ra$ and $S\cap
  E\!K = \la s_{\ell,t} \neq 1 \mid 1\leq\ell\leq n, t\in {\mc T}\ra$
  are freely generated. For each $s \in \S$, the subgroup $S \cap E\!K$
  is $\delta_s$-invariant since $S \cap E\!K \unlhd S$ holds. Because $G$
  splits over $H$, we have $S \cap H = \{1\}$. Thus the 
  generators $s_{\ell,t} \in S \cap E\!K$ are contained in the kernel of the
  free presentation $\pi\colon F({\mc Y}) \to H$ which is given by $H$'s
  invariant $L$-presentation above. Define ${\mc Z} = \{a_1,\ldots,a_m\}$
  and an embedding
  \[
    \chi\colon F({\mc Z}) \to F({\mc Y}),\: a_\ell \mapsto a_{\ell,1}
  \]
  where $1 \in {\mc T}$ denotes the trivial element in the
  Schreier transversal ${\mc T}$. For $s \in \S$ and $a_\ell \in
  {\mc Z}$, we choose a representative $a_\ell^{\chi\delta_s\gamma} \in F({\mc
  Z})$ with
  \begin{equation}\Label{eqn:Split01}
    a_\ell^{-\chi\delta_s}\, (a_\ell^{\chi\delta_s\gamma})^\chi
    \in \ker(\pi).
  \end{equation}
  For $s\in\S$, let $\ti\delta_s \in F({\mc Z})$ be 
  induced by the map  $a_\ell \mapsto a_\ell^{\chi\delta_s\gamma}$ and define
  $\iota\colon F({\mc Z}) \to H$ by $\iota = \chi\pi$. Then
  Eq.~(\ref{eqn:Split01}) yields that $\ti\delta_s \iota =
  \iota\widehat\delta_s$. In the following, we write $\ti\delta_t =
  \ti\delta_{x_1}\cdots \ti\delta_{x_n}$ if $t = x_1 \cdots x_n \in \S^*$
  and each $x_i \in \S$. Moreover, we write $X$ for $x^{-1}$ and
  $T$ for $t^{-1}$. This yields that $a_{\ell,1} ^{\delta_T} =
  ta_\ell T = a_{\ell,t}$. Let $\gamma\colon F({\mc Y}) \to F({\mc
  Z})$ be induced by the map
  \[
    \gamma\colon \left\{ \begin{array}{rcll}
    a_{\ell,t} &\mapsto& a_{\ell}^{\ti\delta_T}, &\textrm{for each }
    1 \leq \ell \leq m\textrm{ and } t \in {\mc T},\\
    s_{\ell,t} &\mapsto& 1,&\textrm{for each }1\leq\ell\leq n\textrm{ and }t \in {\mc T}.
    \end{array}\right.
  \]
  For each $1\leq\ell\leq m$, $1\leq k\leq n$, and $t \in {\mc T}$,
  this yields
  \[
    (a_{\ell,t} )^{\gamma\iota} 
    = a_{\ell}^{\ti\delta_T\iota}
    = a_{\ell}^{\iota\widehat\delta_T}
    = a_{\ell}^{\chi\pi\widehat\delta_T}
    = a_{\ell,1}^{\delta_T\pi}
    = a_{\ell,t}^{\pi}
    \quad\textrm{and}\quad
    (s_{k,t})^{\gamma\iota} 
    = 1^\iota = 1 = (s_{k,t})^\pi.
  \]
  Thus $\gamma\iota = \pi$. Define the normal subgroup 
  \[
    N = \Big\la \bigcup_{\ti\sigma \in \ti\Phi^*} \Big(
    \big\{ (y^{-1} y^{\gamma\chi})^{\delta_s \gamma}\big\}_{
    y \in {\mc Y} \setminus {\mc Z}, s\in\S} \Big)^{\ti\sigma}
    \Big\ra^{F({\mc Y})}
  \]
  where $\ti\Phi = \{ \ti\delta_s \mid s\in\S \}$. 
  For $t \in {\mc T}$ and $s \in \S$, we have that 
  \[
    (s_{\ell,t}^{-1} s_{\ell,t}^{\gamma\chi})^{\delta_s\gamma}
    = s_{\ell,t}^{-\delta_s\gamma} (s_{\ell,t}^{\gamma})^{\ti\delta_s} = 1
  \]
  as the subgroup $S \cap E\!K = \la s_{\ell,t} \mid t \in {\mc T},
  1\leq\ell\leq n\ra$ is $\delta_s$-invariant and it is contained
  in the kernel of $\gamma$. This yields that
  \[
    N = \Big\la \bigcup_{\ti\sigma \in \ti\Phi^*}\Big(
    \big\{
    (a_{\ell,t}^{-1} a_{\ell,t}^{\gamma\chi})^{\delta_s \gamma} \big\}_{1\leq\ell\leq m, t \in {\mc T} \setminus \{1\}, s\in\S}\Big)^{\ti\sigma}
    \Big\ra^{F({\mc Z})}.
  \]
  For $t \in {\mc T}$ and $x \in \S$ with $xt \in {\mc T}$, we also
  have that
  \[
    (a_{\ell,t}^{-1} a_{\ell,t}^{\gamma\chi})^{\delta_X\gamma}
    = a_{\ell,t}^{-\delta_X\gamma} a_{\ell,t}^{\gamma\ti\delta_X}
    = a_{\ell,xt}^{-\gamma}\,a_{\ell,t}^{\gamma\ti\delta_X}
    = a_{\ell}^{-\ti\delta_{TX}} a_{\ell}^{\ti\delta_T\ti\delta_X} = 1.
  \]
  It therefore suffices to consider the generators $(a_{\ell,t}^{-1}
  a_{\ell,t}^{\gamma\chi})^{\delta_X\gamma} \in N$ with $1\leq\ell\leq m$,
  $t \in {\mc T}$, and $x \in \S$ but $xt \not\in {\mc T}$. Since $G /
  H \cong S / S\cap E\!K$ is a finitely presented group, there exists
  a finite monoid presentation
  \[
    S / S \cap E\!K \cong \la\:\S \mid (U_1,V_1),\ldots,(U_p,V_p)\:\ra.
  \]
  The monoid congruence $\sim$ induced by this presentation is the
  reflexive, symmetric, and transitive closure of the binary relation
  $\sim$ that is defined by $x \sim y$ if there exist $A,B\in\S^{*}$
  and $1\leq i\leq p$ so that $x = AU_iB$ and $y = AV_iB$ hold. Define
  \[
    M = \Big\la \bigcup_{\ti\sigma\in\ti\Phi^*}\Big(
    \big\{( a_\ell^{-\ti\delta_{U_i}}
    a_\ell^{\ti\delta_{V_i}}) \big\}_{1\leq\ell\leq m, 1\leq i\leq
    p} \Big)^{\ti\sigma}\Big\ra^{F({\mc Z})}.
  \]
  Suppose that $u \sim v$ holds. Then there exist $A_i,B_i,L_i \in \S^{*}$
  so that $u = L_1 \sim \ldots \sim L_q = v$ with $L_i = A_i U_{\ell_i}
  B_i$ and $L_{i+1} = A_i V_{\ell_i} B_i$ (or $L_i = A_i V_{\ell_i} B_i$
  and $L_{i+1} = A_i U_{\ell_i} B_i$). Note that
  \[
    a_\ell^{\ti\delta_{A_i} \ti\delta_{U_{\ell_i}} \ti\delta_{B_i} }
    = (a_\ell^{\ti\delta_{A_i}}) ^{\ti\delta_{U_{\ell_i}} \ti\delta_{B_i} }
    = w_\ell(a_1,\ldots,a_m) ^{\ti\delta_{U_{\ell_i}} \ti\delta_{B_i} }
    = w_\ell(a_1^{\ti\delta_{U_{\ell_i}}},\ldots,a_m^{\ti\delta_{U_{\ell_i}}}) ^{ \ti\delta_{B_i} }
  \]
  for some word $w_\ell(a_1,\ldots,a_m) = a_{\ell}^{\ti\delta_{A_i}}
  \in F({\mc Z})$. The normal subgroup $M$ yields that
  \[
    (a_\ell^{\ti\delta_{A_i}}) ^{\ti\delta_{U_{\ell_i}}} 
    = w_\ell(a_1^{\ti\delta_{U_{\ell_i}}},\ldots,a_m^{\ti\delta_{U_{\ell_i}}})
    = w_\ell(a_1^{\ti\delta_{V_{\ell_i}}},\ldots,a_m^{\ti\delta_{V_{\ell_i}}}) \cdot h
    = a_\ell^{\ti\delta_{A_i}\ti\delta_{V_{\ell_i}}} \cdot h
  \]
  for some $h\in M$. By construction, $M$ is $\ti\Phi^*$-invariant and thus
  \[
    a_\ell^{-\ti\delta_{A_i}
      \ti\delta_{V_{\ell_i}} \ti\delta_{B_i} } a_\ell^{\ti\delta_{A_i}
      \ti\delta_{U_{\ell_i}} \ti\delta_{B_i} } = h^{\ti\delta_{B_i}} \in M.
  \] 
  This shows that, if $u \sim v$ holds, we have $a_\ell^{- \ti\delta_u} a_\ell^{\ti\delta_v} \in M$.
  Suppose that, for $t \in {\mc T}$ and
  $x \in \S$, $xt \not\in {\mc T}$ holds. Then there exists $u = \overline{xt} \in {\mc T}$ with
  $u \sim xt$. Write $U$ for $u^{-1}$. Since $S \cap E\!K \unlhd S$
  holds, there exists $h \in S \cap E\!K \subseteq \ker(\gamma)$
  so that $xt = hu$. This yields that $a_{\ell,t}^{\delta_X} =
  xt\,a_\ell \,TX = hu \, a_\ell\,U h^{-1} = h\,a_{\ell,u}\,h^{-1}$
  and $a_{\ell,t}^{\delta_X\gamma} = a_{\ell,u}^{\gamma} =
  a_\ell^{\ti\delta_U}$.  Since $u \sim xt$ and $U \sim TX$ hold,
  we obtain
  \[
    (a_{\ell,t}^{-1} a_{\ell,t}^{\gamma\chi})^{\delta_X\gamma} 
   = a_{\ell,u}^{-\gamma} a_{\ell}^{\ti\delta_T \ti\delta_X} 
   = a_{\ell}^{-\ti\delta_U} a_{\ell}^{\ti\delta_T \ti\delta_X } \in M.
  \] 
  Thus $N \subseteq M$. It suffices to show that $M \subseteq N$
  holds. Since $M$ and $N$ are both normal subgroups of
  $F({\mc Z})$ and both are $\ti\Phi^*$-invariant, it suffices to
  prove that
  $a_\ell^{-\ti\delta_{U_i}} a_\ell^{\ti\delta_{V_i}} \in N = \ker(\psi)$
  holds. Since $\ti\delta_s \psi = \psi \bar\delta_s$ and $\chi\gamma =
  \id_{F({\mc Z})}$ hold, we have that
  \begin{eqnarray*}
    (a_\ell^{-\ti\delta_{U_i}} a_\ell^{\ti\delta_{V_i}} )^\psi 
    &=& a_\ell^{-\psi\bar\delta_{U_i}} a_\ell^{\psi\bar\delta_{V_i}}
    = a_\ell^{-\chi\gamma\psi\bar\delta_{U_i}} a_\ell^{\chi\gamma\psi\bar\delta_{V_i}}
    = a_{\ell,1}^{-\gamma\psi\bar\delta_{U_i}} a_{\ell,1}^{\gamma\psi\bar\delta_{V_i}} \\
    &=& a_{\ell,1}^{-\delta_{U_i}\gamma\psi} a_{\ell,1}^{\delta_{V_i}\gamma\psi}
    = (a_{\ell,1}^{-\delta_{U_i}} a_{\ell,1}^{\delta_{V_i}})^{\gamma\psi}.
  \end{eqnarray*}
  As $S\cap E\!K \unlhd S$ and ${\mc T} \subseteq S$ hold, there exist
  $h \in S\cap E\!K = \la s_{\ell,t} \mid 1\leq\ell\leq n, t\in {\mc
  T}\ra$ and $t = \overline{U_i^{-1}} \in {\mc T}$ with $U_i^{-1}
  = ht$. Thus $a_{\ell,1}^{\delta_{U_i}} = U_i^{-1}\,a_\ell U_i =
  h\,ta_\ell t^{-1} h^{-1} = h\,a_{\ell,t}\,h^{-1}$. Since $h \in
  \ker(\gamma)$ holds, we obtain  $(a_{\ell,1}^{\delta_{U_i}})^\gamma
  = a_{\ell,t}^\gamma$. Since $U_i \sim V_i$ holds, we also
  have that $\overline{V_i^{-1}} = t$. Similarly, we obtain
  $(a_{\ell,1}^{\delta_{V_i}})^\gamma = a_{\ell,t}^\gamma$. Thus
  $a_{\ell,1}^{-\delta_{U_i}} a_{\ell,1}^{\delta_{V_i}} \in \ker(\gamma)$
  and so $(a_\ell^{-\ti\delta_{U_i}} a_\ell^{\ti\delta_{V_i}} )^\psi =
  1$ or $a_\ell^{-\ti\delta_{U_i}} a_\ell^{\ti\delta_{V_i}} \in N$. Thus
  $M = N$. This shows that that factor group $F({\mc Z}) / N$ is invariantly finitely
  $L$-presented and so is our subgroup $H$.
\end{CenProof}
\noindent Even if $G / H$ is free, the finite $L$-presentation of $F({\mc
Z}) / N$ in the proof of Theorem~\ref{thm:GSplits} contains the relations
of a monoid presentation of the free group. It is not clear whether or
not these relations can be omitted as was done in~\cite{Ben11}. However,
the result in~\cite{Ben11} is a consequence of Theorem~\ref{thm:GSplits}
even if these relations are not redundant:
\def\0{Benli~\cite{Ben11}}
\begin{theorem}[\0]\Label{thm:Ben11}
  Every finitely generated normal subgroup of a finitely presented
  group is invariantly finitely $L$-presented if the quotient is
  infinite cyclic.
\end{theorem}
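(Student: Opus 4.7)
The plan is to deduce Theorem~\ref{thm:Ben11} directly from Theorem~\ref{thm:GSplits} by verifying that the hypothesis of the latter is automatic when the quotient is infinite cyclic. The key observation, already recorded in the remark after Theorem~\ref{thm:GSplits}, is that an extension of a group by a free group always splits, and $\Z$ is the free group of rank one. Hence nothing new is required beyond unwinding the splitting.

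Concretely, suppose $G$ is finitely presented, $H \unlhd G$ is finitely generated, and $G/H$ is infinite cyclic. I would choose an element $s \in G$ whose image $sH$ generates the quotient, and consider the cyclic subgroup $S = \la s \ra \leq G$. The restriction to $S$ of the canonical projection $G \to G/H$ is a surjective homomorphism $S \to \Z$; since $\Z$ is torsion-free, its kernel $S \cap H$ must be trivial, for any nontrivial power $s^k \in H$ would collapse $sH$ to a torsion element of $G/H$. Thus $S \cong \Z$, $S \cap H = \{1\}$, and $SH = G$, which exhibits $G$ as a semidirect product $G \cong H \rtimes S$; equivalently, $G$ splits over $H$ in the sense required by Theorem~\ref{thm:GSplits}.

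Applying Theorem~\ref{thm:GSplits} to this splitting then yields immediately that the finitely generated normal subgroup $H$ is invariantly finitely $L$-presented, which is precisely the statement of Theorem~\ref{thm:Ben11}. There is no substantive obstacle in this argument: the only point of verification is the elementary observation that a preimage in $G$ of a generator of $\Z \cong G/H$ generates a complementary infinite cyclic subgroup, and all of the genuine $L$-presentation machinery---the construction of the normal subgroup $N \unlhd F(\mc Z)$, the monoid presentation of $G/H$, and the proof that $F(\mc Z)/N$ is invariantly finitely $L$-presented---has already been carried out inside the proof of Theorem~\ref{thm:GSplits}.
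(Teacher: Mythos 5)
Your proposal is correct and follows exactly the route of the paper, which likewise deduces the theorem from Theorem~\ref{thm:GSplits} by noting that a free (here, infinite cyclic) quotient forces $G$ to split over $H$. Your explicit verification that a preimage of a generator of $G/H\cong\Z$ spans a complement meeting $H$ trivially is just an unwinding of the same observation.
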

\begin{proof}
  Since the quotient is free, the finitely presented group
  splits over its finitely generated normal subgroup and thus, by
  Theorem~\ref{thm:GSplits}, the subgroup is invariantly finitely
  $L$-presented.
\end{proof}
Even if the finitely presented group does not split over its finitely
generated subgroup, the subgroup is possibly invariantly finitely
$L$-presented:
\begin{theorem}\Label{thm:Rank2}
  Every finitely generated normal subgroup of a finitely presented group
  is invariantly finitely $L$-presented if the quotient is free
  abelian with rank two.
\end{theorem}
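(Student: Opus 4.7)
My plan is to adapt the strategy from the proof of Theorem~\ref{thm:GSplits}: construct a normal subgroup $N \unlhd F({\mc Z})$ together with the associated commutative diagram of Section~\ref{sec:PrelFGN}, and then show that $F({\mc Z}) / N$ is invariantly finitely $L$-presented, so that Lemma~\ref{lem:FZNFac} yields the theorem.

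Since $G / H \cong \Z^2$, I would write $G \cong \la a_1, \ldots, a_m, s_1, s_2 \mid \R\ra$ with $s_1 H$ and $s_2 H$ generating $G / H$, and choose the Schreier transversal ${\mc T} = \{ s_1^i s_2^j \mid i, j \in \Z \}$. For this transversal, all Schreier generators $s_{2, t}$ are trivial, and each non-trivial $s_{1, s_1^i s_2^j}$ equals the conjugate $s_1^{i+1} (s_2^j s_1 s_2^{-j} s_1^{-1}) s_1^{-i-1}$, which lies in $H$ because $G / H$ is abelian. Fixing a word $c \in F({\mc Z})$ that represents $[s_2, s_1] \in H$, each such Schreier generator can be rewritten in $H$ as a product of iterated conjugates of $c^{\pm 1}$ by powers of $s_1$ and $s_2$; pulling back through the Reidemeister rewriting gives a well-defined word $h_{i,j} \in F({\mc Z})$ representing $s_{1, s_1^i s_2^j}^\pi$.

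Following Section~\ref{sec:PrelFGN}, I would then define $\gamma \colon F({\mc Y}) \to F({\mc Z})$ by $a_{\ell, t} \mapsto a_\ell^{\ti\delta_T}$, $s_{1, s_1^i s_2^j} \mapsto h_{i,j}$ and $s_{2, t} \mapsto 1$, together with substitutions $\ti\delta_s = \chi\, \delta_s\, \gamma$ and the normal subgroup $N$ from Eq.~(\ref{eqn:NormSubFac}). The task is to prove that $N$ coincides with the normal closure, under the $\ti\Phi^*$-action, of a finite subset ${\mc U} \subseteq F({\mc Z})$. By analogy with Theorem~\ref{thm:GSplits}, ${\mc U}$ should consist of the relations $a_\ell^{-\ti\delta_{U_i}} a_\ell^{\ti\delta_{V_i}}$ arising from a finite monoid presentation of $\Z^2$ on ${\mc S}$ (the invertibility relations together with the single commutativity relation $s_1 s_2 \sim s_2 s_1$), augmented by finitely many extra relations recording how the commutator word $c$ is transported by $\ti\delta_{s_1}$ and $\ti\delta_{s_2}$; these extra relations are forced by the fact that $\gamma$ no longer vanishes on the non-trivial Schreier generators.

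The main obstacle will be the inclusion $N \subseteq \la \bigcup_{\ti\sigma \in \ti\Phi^*} {\mc U}^{\ti\sigma} \ra^{F({\mc Z})}$. In the split setting of Theorem~\ref{thm:GSplits} the non-trivial Schreier generators disappeared under $\gamma$, so the reduction of $(a_{\ell, t}^{-1} a_{\ell, t}^{\gamma\chi})^{\delta_x \gamma}$ to a normal form required only the invertibility relations of the free monoid; here the analogous calculation produces a residue inside the normal closure of $c$, and I will have to show by induction on the length of $t \in \S^*$ that such residues can be absorbed into the $\ti\Phi^*$-closure of ${\mc U}$, using the commutativity relation to bring $t$ into the standard form $s_1^i s_2^j$ and then the prescribed behaviour of $c$ under the $\ti\delta_s$ to match the computed value $h_{i,j}$. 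The reverse inclusion ${\mc U} \subseteq \ker(\psi)$ should follow essentially verbatim from the proof of Theorem~\ref{thm:GSplits}: the identity $\psi = \chi \gamma \psi$ reduces it to the statement that $U_i \sim V_i$ in $G / H$ forces $a_{\ell, 1}^{-\delta_{U_i}} a_{\ell, 1}^{\delta_{V_i}} \in \ker(\gamma)$, which holds by the construction of the $h_{i,j}$.
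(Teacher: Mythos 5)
Your overall strategy (build $N\unlhd F({\mc Z})$, show $F({\mc Z})/N$ is invariantly finitely $L$-presented, invoke Lemma~\ref{lem:FZNFac}) is the paper's strategy, and expressing the commutator as a word $c\in F({\mc Z})$ rather than adjoining it as an extra generator (the paper works with ${\mc Z}=\{a_1,\ldots,a_m\}\cup\{t_1\}$, $t_1\mapsto t_{0,1}$) is a harmless variation. But there is a genuine error at the decisive step. You propose to put the untwisted commutativity relation $a_\ell^{-\ti\delta_{s_1}\ti\delta_{s_2}}a_\ell^{\ti\delta_{s_2}\ti\delta_{s_1}}$ into ${\mc U}$ and claim the reverse inclusion ``follows essentially verbatim'' because $U_i\sim V_i$ forces $a_{\ell,1}^{-\delta_{U_i}}a_{\ell,1}^{\delta_{V_i}}\in\ker(\gamma)$. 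That step is exactly where the non-split case breaks: in the proof of Theorem~\ref{thm:GSplits} the correcting element $h$ with $xt=hu$ lies in $S\cap E\!K\subseteq\ker(\gamma)\cap\ker(\pi)$ because $S\cap H=\{1\}$; here $S\cap E\!K$ surjects onto the nontrivial subgroup of $H$ generated by the commutator, so $h$ survives. Concretely, $a_\ell^{-\ti\delta_{s_1}\ti\delta_{s_2}}a_\ell^{\ti\delta_{s_2}\ti\delta_{s_1}}$ maps under $\iota$ to a conjugate of $[a_\ell^{-1},u^{-1}]$ with $u=[s_1,s_2]\in H$, which is not trivial unless the commutator centralizes $H$. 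So this element does not lie in $\ker(\iota)$, let alone in $N$, and adding it to the $L$-presentation defines a proper quotient of $H$. The relation must be twisted by the inner automorphism of $F({\mc Z})$ induced by $c$, as in the paper's set ${\mc V}$ containing $y^{-\ti\delta_T\ti\delta_U}\,y^{\ti\delta_U\ti\delta_T\,\Delta(t_1^{-1})}$, reflecting the identity $\delta_T\delta_U=\delta_U\delta_T\cdot\Delta(t_{0,1}^{-1})$ among the substitutions. Your ``extra relations recording how $c$ is transported'' cannot repair this, because once the untwisted relation is in ${\mc U}$ the resulting group is already too small.

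A second, related omission: your definition of $h_{i,j}$ as ``a product of iterated conjugates of $c^{\pm1}$'' and your induction ``bringing $t$ into standard form'' presuppose a canonical such expression. The paper obtains one by proving that $S\cap E\!K$ is \emph{freely} generated by the transversal-conjugates $\{(t_{0,1})^{u^jt^i}\}$ of the single normal generator $t_{0,1}$ (via an explicit automorphism of $S\cap E\!K$); this unique-factorization property is what makes $\gamma$ intertwine conjugation by transversal elements with the $\ti\delta$'s, and its failure for $\Z\times\Z\times\Z$ (Remark~\ref{rem:ZxZxZ}) is precisely why the method stops at rank two. Without isolating this fact, your induction has no well-defined target to reduce to, so the inclusion $N\subseteq\la\bigcup_{\ti\sigma\in\ti\Phi^*}{\mc U}^{\ti\sigma}\ra^{F({\mc Z})}$ remains unproved.
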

\begin{proof}
  Let $G$ be a finitely presented group and let $H \unlhd G$
  be finitely generated so that $G / H \cong \Z \times \Z$ holds.
  By Lemma~\ref{lem:FZNFac}, it suffices to construct a factor group
  $F({\mc Z}) / N$ which is invariantly finitely $L$-presented. Since
  $G / H \cong \Z \times \Z$ holds, there exists $t,u\in G$ so that $G
  / H = \la tH, uH \ra$ holds. Moreover, as $H$ is finitely generated,
  there exist $a_1,\ldots,a_m\in H$ so that $H = \la a_1,\ldots,a_m \ra$
  holds. Then $G = \la a_1,\ldots, a_m, t, u \ra$ holds and there exists
  a finite set of relations $\R$ with $G \cong \la\,\{a_1,\ldots, a_m,
  t, u \}\mid \R\,\ra$. We choose as Schreier transversal ${\mc T} = \{
  t^i u^j \mid i,j \in \Z \}$. Then, by Lemma~\ref{lem:FirstObserv},
  the subgroup $H$ is invariantly $L$-presented by $\la {\mc Y}
  \mid \emptyset \mid \{ \delta_u, \delta_U, \delta_t, \delta_T \}
  \mid \R^\tau \ra$ where $\delta_x$ denotes the endomorphism of the
  free group $F({\mc Y})$ that is induced by conjugation with $x \in
  \{u,U=u^{-1},t,T=t^{-1}\}$, $\tau$ denotes the Reidemeister rewriting,
  and ${\mc Y} = \{ a_{\ell,i,j}, t_{l,k} \mid i,j,k,l \in \Z, k\neq 0\}$
  are the following Schreier generators:
  \[
    \begin{array}{rcccl}
    a_{\ell,i,j}&=&\gamma(t^i u^j, a_\ell ) &=& t^i u^j a_\ell u^{-j} t^{-i},\\
    t_{i,j}&=&\gamma( t^i u^j, t ) &=& t^i u^j t u^{-j} t^{-1} t^{-i},\\
    u_{i,j}&=&\gamma( t^i u^j, u )  &=& t^i u^j u u^{-j} u^{-1} t^{-i}.
    \end{array}
  \]
  Note that $t_{i,j} = 1$ if and only if $j = 0$ while $u_{i,j} = 1$
  for each $i,j\in\Z$. The endomorphisms $\delta_t$ and $\delta_T$ are
  induced by the maps
  \[
    \delta_t \colon \left\{\begin{array}{rcll}
    a_{\ell,i,j} &\mapsto& a_{\ell,i-1,j},\\
    t_{i,j} &\mapsto& t_{i-1,j},
    \end{array}\right.
    \quad\textrm{and}\quad
    \delta_T \colon \left\{\begin{array}{rcl}
    a_{\ell,i,j} &\mapsto& a_{\ell,i+1,j},\\
    t_{i,j} &\mapsto& t_{i+1,j}, 
    \end{array}\right.
  \]
  for each $i,j\in\Z$; while $\delta_u$ and $\delta_U$ are induced by
  the maps
  \[
    \delta_u \colon \left\{\begin{array}{rcll}
    a_{\ell,i,j}  &\mapsto& (a_{\ell,i,j-1})^{t_{i-1,-1}^{-1} \cdots t_{0,-1}^{-1}},& i\geq 0, j\in\Z,\\[0.75ex]
    a_{\ell,-i,j} &\mapsto& (a_{\ell,-i,j-1})^{t_{-i,-1}\cdots t_{-1,-1}},& i\geq  0, j\in\Z,\\[0.75ex]
    t_{i,j}       &\mapsto& (t_{i,j-1}\,t_{i,-1}^{-1})^{t_{i-1,-1}^{-1} \cdots t_{0,-1}^{-1}}, & i\geq  0, j\in\Z,\\[0.75ex]
    t_{-i,j}      &\mapsto& (t_{-i,j-1}\,t_{-i,-1}^{-1})^{t_{-i,-1}\cdots t_{-1,-1}}, & i\geq  0, j\in\Z,
    \end{array}\right.
  \]
  and
  \[
    \delta_U \colon \left\{\begin{array}{rcll}
    a_{\ell,i,j}  &\mapsto& (a_{\ell,i,j+1})^{t_{i-1,1}^{-1} \cdots t_{0,1}^{-1}},& i\geq  0, j\in\Z,\\[0.75ex]
    a_{\ell,-i,j} &\mapsto& (a_{\ell,-i,j+1})^{t_{-i,1}\cdots t_{-1,1}},& i\geq  0, j\in\Z,\\[0.75ex]
    t_{i,j}       &\mapsto& (t_{i,j+1}\,t_{i,1}^{-1})^{t_{i-1,1}^{-1} \cdots t_{0,1}^{-1}}, & i\geq  0, j\in\Z,\\[0.75ex]
    t_{-i,j}      &\mapsto& (t_{-i,j+1}\,t_{-i,1}^{-1})^{t_{-i,1}\cdots t_{-1,1}}, & i\geq  0, j\in\Z.
    \end{array}\right.
  \]
  We will construct an invariant finite $L$-presentation for the
  subgroup $H$ with generators ${\mc Z} = \{a_1,\ldots,a_m\}
  \cup \{t_1\}$. Define an embedding $\chi\colon
  F({\mc Z}) \to F({\mc Y})$ that is induced by the map
  \[
    \chi\colon \left\{\begin{array}{rcll}
    a_{\ell} &\mapsto& a_{\ell,0,0}, &\textrm{for each }1\leq\ell\leq m\\
    t_1 &\mapsto& t_{0,1}.& 
    \end{array}\right.
  \]
  Write $\Phi = \{\delta_t,\delta_T,\delta_u,\delta_U\}$. For $y
  \in {\mc Z}$ and $\delta \in \Phi$, choose $y^{\chi\delta\gamma}\in
  F({\mc Z})$ with
  \begin{equation}\Label{eqn:ZxZ01}
    y^{-\chi\delta} (y^{\chi\delta\gamma})^\chi \in \ker(\pi).
  \end{equation}
  Define $\iota\colon F({\mc Z}) \to H$ by $\iota = \chi\pi$ where $\pi$
  denotes the free presentation $\pi\colon F({\mc Y}) \to H$ that is given
  by $H$'s invariant $L$-presentation above. For each $\delta\in\Phi$,
  define an endomorphism $\ti\delta\colon F({\mc Z}) \to F({\mc Z})$
  that is induced by the map $y \mapsto y^{\chi\delta\gamma}$.  Then,
  for each $\delta \in \Phi$ and $y \in {\mc Z}$, we obtain
  \[
    y^{\iota\widehat\delta}
    = y^{\chi\pi\widehat\delta}
    = y^{\chi\delta\pi}
    = (y^{\chi\delta})^\pi
    = (y^{\chi\delta\gamma\chi})^\pi
    = (y^{\chi\delta\gamma})^{\chi\pi}
    = y^{\ti\delta\iota}
  \]
  and thus $\ti\delta\iota = \gamma\widehat\delta$. Write $\X =
  \{a_1,\ldots,a_m,t,u\}$ and consider the following subgroups
  of the free group $F({\mc X})$:  Let $E = \la a_1,\ldots,a_m \ra$
  and $S = \la t,u \ra$ be finitely generated subgroups of
  $F({\mc X})$.  Furthermore, let $K \unlhd F({\mc X})$ be the kernel of
  $G$'s free presentation.  Then $G \cong F({\mc X}) / K$ and $H \cong
  E\!K / K$. Moreover, the normal subgroup $E\!K \unlhd F({\mc X})$ is
  supplemented by the finitely generated free group $S$; i.e., $F({\mc
  X}) = S\,E\!K$ holds. Thus $G / H \cong F({\mc X}) / E\!K \cong S /
  S \cap E\!K$. Since $G / H$ is finitely presented, the free subgroup
  $S \cap E\!K$ is finitely generated as a normal subgroup.  The Schreier
  generators ${\mc Y}$ yield that the subgroups
  \[
    E\!K = \la {\mc Y} \ra 
    \quad\textrm{and}\quad
    S\cap E\!K = \la t_{i,j} \mid i,j\in\Z, j \neq 0\ra
  \]
  are freely generated. Moreover, we have that
  \begin{eqnarray*}
    S \cap E\!K &=& \la t_{i,j+1} t_{i,j}^{-1} \mid i,j \in \Z \ra \\
    &=& \la \ldots t_{i,-2} t_{i,-3}^{-1},\: t_{i,-1}t_{i,-2}^{-1},\: 
    t_{i,-1}^{-1},\: t_{i,1},\: t_{i,2} t_{i,1}^{-1},\: t_{i,3} t_{i,2}^{-1},\ldots \mid i\in\Z \ra.
  \end{eqnarray*}
  The latter subgroup is freely generated as the homomorphism $\psi$ that 
  is induced by the map
  \[
   \psi\colon S\cap E\!K \to S\cap E\!K,\: \left\{ \begin{array}{rcll}
   t_{i,j} &\mapsto& t_{i,j+1} t_{i,j}^{-1}, &j < -1 \\
   t_{i,-1}&\mapsto& t_{i,-1}^{-1}, & \\
   t_{i,1} &\mapsto& t_{i,1} &\\
   t_{i,j} &\mapsto& t_{i,j} t_{i,j-1}^{-1}, &j>1
   \end{array}\right.
  \]
  is an automorphism of $S \cap E\!K$ whose inverse is induced by the map
  \[
   \psi^{-1}\colon S\cap E\!K \to S\cap E\!K,\: \left\{ \begin{array}{rcll}
   t_{i,j} &\mapsto& t_{i,j}^{-1} t_{i,j+1}^{-1} \cdots t_{i,-1}^{-1}, &j < -1 \\
   t_{i,-1}&\mapsto& t_{i,-1}^{-1}, & \\
   t_{i,1} &\mapsto& t_{i,1} &\\
   t_{i,j} &\mapsto& t_{i,j} t_{i,j-1} \cdots t_{i,1}, &j>1.
   \end{array}\right.
  \]
  Note that we have 
  \begin{eqnarray*}
    t_{i,j+1} t_{i,j}^{-1} 
   &=&t^i u^{j+1} t u^{-j-1} t^{-1} t^{-i} \cdot 
      (t^i u^j t u^{-j} t^{-1} t^{-i} )^{-1}
    = (t_{0,1})^{u^j t^i}.
  \end{eqnarray*}
  In fact, every element in $S\cap E\!K$ has a unique representation as
  a word in the basis $\{ t^i u^j\cdot t_{0,1}\cdot u^{-j} t^{-i} \mid
  i,j\in\Z \}$ where $t_{0,1} = utUT$ is a normal generator of $S \cap
  E\!K = \la t_{0,1} \ra^S$. More precisely, for $i\geq 0$ and $j>0$,
  we have the following representatives in free subgroup $S \cap E\!K
  \leq F({\mc Y})$:
  \[
   \begin{array}{rcl}
   t_{i,j} &=& \left(t_{0,1}^{\delta_U^{j-1}} \cdot t_{0,1}^{\delta_U^{j-2}}
   \cdots t_{0,1} \right)^{\delta_T^i}\\
   t_{-i,j} &=& \left(t_{0,1}^{\delta_U^{j-1}} \cdot t_{0,1}^{\delta_U^{j-2}}
   \cdots t_{0,1} \right)^{\delta_t^i}
   \end{array}
   \quad\textrm{and}\quad
   \begin{array}{rcl}
   t_{i,-j}
   &=& \left(t_{0,1}^{-\delta_u^{j}} \cdot t_{0,1}^{-\delta_u^{j-1}} 
   \cdots t_{0,1}^{-\delta_u}\right)^{\delta_T^i} \\
   t_{-i,-j}
   &=& \left(t_{0,1}^{-\delta_u^{j}} \cdot t_{0,1}^{-\delta_u^{j-1}} 
   \cdots t_{0,1}^{-\delta_u}\right)^{\delta_t^i}.
   \end{array}
  \]
  The Schreier generators $a_{\ell,i,j}$ are conjugates
  of the generators $a_{\ell,0,0}$ so that
  \[
    \begin{array}{rcl}
    a_{\ell,i,j} &=& (a_{\ell,0,0})^{\delta_U^j\delta_T^i} \\[0.50ex]
    a_{\ell,-i,j} &=& (a_{\ell,0,0})^{\delta_U^j\delta_t^i}
    \end{array}
    \quad\textrm{and}\quad
    \begin{array}{rcl}
    a_{\ell,i,-j} &=& (a_{\ell,0,0})^{\delta_u^j\delta_T^i} \\[0.50ex]
    a_{\ell,-i,-j} &=& (a_{\ell,0,0})^{\delta_u^j\delta_t^i}.
    \end{array}
  \]
  In particular, we can choose the following basis $\widehat{\mc Y}$ for 
  the free subgroup $E\!K$:
  \[
    \widehat{\mc Y} 
    =  \left\{ (a_{\ell,0,0})^{\delta_U^j\delta_T^i},
      \ldots (a_{\ell,0,0})^{\delta_u^j \delta_t^i},
      (t_{0,1})^{\delta_U^j\delta_T^i},\ldots,
      (t_{0,1})^{\delta_u^j\delta_t^i} \right\}_{ i,j\geq 0}.
  \]
  Define $\gamma\colon F(\widehat{\mc Y}) \to F({\mc Z})$ to be induced
  by the map
  \[
    \gamma\colon \left\{ \begin{array}{rcl}
    (a_{\ell,0,0})^{\delta_U^j\delta_T^i}   &\mapsto&(a_{\ell})^{\ti\delta_U^j \ti\delta_T^i}, \\[0.5ex]
    (a_{\ell,0,0})^{\delta_U^j\delta_t^i}  &\mapsto&(a_{\ell})^{\ti\delta_U^j \ti\delta_t^i}, \\[0.5ex]
    (a_{\ell,0,0})^{\delta_u^j\delta_T^i}  &\mapsto&(a_{\ell})^{\ti\delta_u^j \ti\delta_T^i}, \\[0.5ex]
    (a_{\ell,0,0})^{\delta_u^j\delta_t^i} &\mapsto&(a_{\ell})^{\ti\delta_u^j \ti\delta_t^i}, 
    \end{array}\right.
    \quad\textrm{and}\quad
    \gamma\colon \left\{ \begin{array}{rcl}
    (t_{0,1})^{\delta_U^j\delta_T^i}   &\mapsto& (t_{1})^{\ti\delta_U^j\ti\delta_T^i}  \\[0.5ex]
    (t_{0,1})^{\delta_U^j\delta_t^i}   &\mapsto& (t_{1})^{\ti\delta_U^j\ti\delta_t^i}  \\[0.5ex]
    (t_{0,1})^{\delta_u^j\delta_T^i}   &\mapsto& (t_{1})^{\ti\delta_u^j\ti\delta_T^i}   \\[0.5ex]
    (t_{0,1})^{\delta_u^j\delta_t^i}   &\mapsto& (t_{1})^{\ti\delta_u^j\ti\delta_t^i},
    \end{array}\right.
  \]
  where $i,j\geq 0$. Then $\gamma$ acts on the Schreier
  generators ${\mc Y}$ as follows:
  \[
    \gamma\colon \left\{ \begin{array}{rcl}
    a_{\ell,i,j}   &\mapsto&(a_{\ell})^{\ti\delta_U^j \ti\delta_T^i}, \\[0.75ex]
    a_{\ell,-i,j}  &\mapsto&(a_{\ell})^{\ti\delta_U^j \ti\delta_t^i}, \\[0.75ex]
    a_{\ell,i,-j}  &\mapsto&(a_{\ell})^{\ti\delta_u^j \ti\delta_T^i}, \\[0.75ex]
    a_{\ell,-i,-j} &\mapsto&(a_{\ell})^{\ti\delta_u^j \ti\delta_t^i}, 
    \end{array}\right.
    \quad\textrm{and}\quad
    \gamma\colon \left\{ \begin{array}{rcl}
    t_{i,j}   &\mapsto& (t_{1}^{\ti\delta_U^{j-1}} \cdots t_1)^{\ti\delta_T^i}, \\[0.75ex]
    t_{-i,j}  &\mapsto& (t_{1}^{\ti\delta_U^{j-1}} \cdots t_1)^{\ti\delta_t^i}, \\[0.75ex]
    t_{i,-j}  &\mapsto&
    (t_1^{-\ti\delta_u^j} \cdots t_1^{-\ti\delta_u})^{\ti\delta_T^i},\\[0.75ex]
    t_{-i,-j} &\mapsto&
    (t_{1}^{-\ti\delta_u^{j}} \cdots t_{1}^{-\ti\delta_u})^{\ti\delta_t^i},
    \end{array}\right.
  \]
  where $i \geq 0$ and $j>0$. For $i\geq 0$ and $j >0$, the element
  $t_{i,j} \in {\mc Y}$ is mapped by $\gamma\iota$ to
  \begin{eqnarray*}
    t_{i,j}^{\gamma\iota} 
    &=&(t_{1}^{\ti\delta_U^{j-1}} \cdots t_1)^{\ti\delta_T^i\iota} 
     = (t_{1}^{\ti\delta_U^{j-1}\ti\delta_T^i} \cdots t_1^{\ti\delta_T^i})^\iota
     = t_{1}^{\iota\widehat\delta_U^{j-1}\widehat\delta_T^i} \cdots t_1^{\iota\widehat\delta_T^i}\\
    &=&(t_{1}^{\chi\pi\widehat\delta_U^{j-1}\widehat\delta_T^i} \cdots t_1^{\chi\pi\widehat\delta_T^i})
     = (t_{0,1}^{\delta_U^{j-1}\delta_T^i} \cdots t_{0,1}^{\delta_T^i})^\pi 
     = (t_{i,j})^\pi
  \end{eqnarray*}
  because $\ti\delta \iota = \iota \widehat\delta$ holds. Similarly, we
  obtain that $a_{\ell,i,j}^{\gamma\iota} = a_{\ell,i,j}^\pi$ holds. Thus
  $\gamma\iota = \pi$. Define the normal subgroup
  \[
    N = \Big\la \bigcup_{\sigma \in \ti\Phi^*} \Big(
    \big\{
    (y^{-1} y^{\gamma\chi})^{\delta \gamma} \big\}_{
    y \in {\mc Y} \setminus {\mc Z}, \delta \in \Phi }\Big)^\sigma
    \Big\ra^{F({\mc Z})}.
  \]
  We prove that $F({\mc Z}) / N $ is invariantly finitely $L$-presented
  so that Lemma~\ref{lem:FZNFac} applies. For $i \geq 0$ and $j>0$,
  it holds that
  \[
    \begin{array}{c@{\:=\:}c@{\:=\:}c@{\:=\:}c}
    (t_{i,j}^{-1} t_{i,j}^{\gamma\chi})^{\delta_T\gamma} 
    & t_{i+1,j}^{-\gamma} t_{i,j}^{\gamma\ti\delta_T} 
    & (t_1^{\ti\delta_U^{j-1}} \cdots t_1)^{-\ti\delta_T^{i+1}}
        (t_1^{\ti\delta_U^{j-1}} \cdots t_1)^{\ti\delta_T^i \, \ti\delta_T}
    &1,\\
    (t_{-i,j}^{-1} t_{-i,j}^{\gamma\chi})^{\delta_t\gamma}
    & t_{-i-1,j}^{-\gamma} t_{-i,j}^{\gamma\ti\delta_t}
    & (t_1^{\ti\delta_U^{j-1}} \cdots t_1)^{-\ti\delta_t^{i+1}}
        (t_1^{\ti\delta_U^{j-1}} \cdots t_1)^{\ti\delta_t^i \, \ti\delta_t}
    &1,\\
    (t_{i,-j}^{-1} t_{i,-j}^{\gamma\chi})^{\delta_T\gamma} 
    & t_{i+1,-j}^{-\gamma} t_{i,-j}^{\gamma\ti\delta_T} 
    & (t_1^{-\ti\delta_u^{j}} \cdots t_1^{-\ti\delta_u})^{-\ti\delta_T^{i+1}}
        (t_1^{-\ti\delta_u^{j}} \cdots t_1^{-\ti\delta_u})^{\ti\delta_T^i \, \ti\delta_T}
    &1,\\
    (t_{-i,-j}^{-1} t_{-i,-j}^{\gamma\chi})^{\delta_t\gamma}
    & t_{-i-1,-j}^{-\gamma} t_{-i,-j}^{\gamma\ti\delta_t}
    & (t_1^{-\ti\delta_u^{j}} \cdots t_1^{-\ti\delta_u})^{-\ti\delta_t^{i+1}}
        (t_1^{-\ti\delta_u^{j}} \cdots t_1^{-\ti\delta_u})^{\ti\delta_t^i \, \ti\delta_t} 
    &1.
    \end{array}
  \] 
  For $i = 0$, we also have that
  \[
    \begin{array}{c@{\:=\:}c@{\:=\:}c@{\:=\:}c}
    (t_{0,j}^{-1} t_{0,j}^{\gamma\chi})^{\delta_U\gamma}
    & (t_{0,j+1}\,t_{0,1}^{-1})^{-\gamma} t_{0,j}^{\gamma\ti\delta_U} 
    & (t_1^{\ti\delta_U^j} \cdots t_1^{\ti\delta_U}\cdot t_1\cdot t_1^{-1} )^{-1} 
        (t_1^{\ti\delta_U^{j-1}} \cdots t_1 )^{\ti\delta_U}
    & 1,\\
    (t_{0,-j}^{-1} t_{0,-j}^{\gamma\chi})^{\delta_u\gamma}
    & (t_{0,-j-1}\,t_{0,-1}^{-1})^{-\gamma} t_{0,-j}^{\gamma\ti\delta_u} 
    & (t_1^{-\ti\delta_u^{j+1}} \cdots t_1^{-\ti\delta_u}\cdot t_1^{\ti\delta_u} )^{-1} 
        (t_1^{-\ti\delta_u^{j}} \cdots t_1^{\ti\delta_u})^{\ti\delta_u}
    & 1.
    \end{array}
  \]
  However, we also need to consider the image $t_{i,j}^{\delta_U} =
  t_{0,1}^{\delta_T^i\delta_U}$ with $i>0$. Notice that in the 
  finitely presented monoid $S / S \cap E\!K$ the following holds:
  \[
     \begin{array}{rclclcl}
     TU 
     &=& UT \cdot tuTU 
     &=& UT \cdot (utUT)^{-1}
     &=& UT \cdot t_{0,1}^{-1},\\
     Tu
     &=& uT \cdot tUTu 
     &=& uT \cdot (utUT)^{\delta_u}
     &=& uT \cdot t_{0,1}^{\delta_u},\\
     tU 
     &=& U\cdot (utUT) \cdot t 
     &=& U\cdot t_{0,1} \cdot t
     &=& Ut \cdot t_{0,1}^{\delta_t},\\
     tu 
     &=& u\cdot (UtuT) \cdot t 
     &=& u \cdot t_{0,-1} \cdot t
     &=& ut \cdot t_{0,1}^{-\delta_u\delta_t}.
     \end{array}
  \]
  Denote by $\Delta(x) \colon F({\mc Y}) \to F({\mc Y}), \:g\mapsto
  x^{-1} g x$ the inner automorphism of $F({\mc Y})$ that is induced
  by conjugation with $x \in F({\mc Y})$. Then $\delta\in \Phi =
  \{\delta_u,\delta_U,\delta_t,\delta_T\}$ satisfy
  \[
   \begin{array}{rcl}
   \delta_T \delta_U&=&\delta_U \delta_T \cdot \Delta( t_{0,1}^{-1} ),\\[0.5ex]
   \delta_T \delta_u&=&\delta_u \delta_T \cdot \Delta( t_{0,1}^{\delta_u}),
   \end{array}
   \quad\textrm{and}\quad
   \begin{array}{rcl}
   \delta_t \delta_U&=&\delta_U \delta_t \cdot \Delta( t_{0,1}^{\delta_t}),\\[0.5ex]
   \delta_t \delta_u&=&\delta_u \delta_t \cdot \Delta( t_{0,1}^{-\delta_u\delta_t}).
   \end{array}
  \]
  We prove that $F({\mc Z}) / N$ is invariantly finitely
  $L$-presented by $\la \{a_1,\ldots,a_m,t_1\} \mid \emptyset \mid \ti\Phi
  \mid {\mc V} \ra$ where the iterated relations in ${\mc V}$ are given by
  \[
    {\mc V} = \Big\{ y^{-1} y^{\ti\delta_t\ti\delta_T}, \ldots, y^{-1} y^{\ti\delta_U\ti\delta_u}, y^{-\ti\delta_T\ti\delta_U} y^{\ti\delta_U\ti\delta_T \Delta( t_1^{-1} ) },
     \ldots, y^{-\ti\delta_t\ti\delta_u} y^{\ti\delta_u\ti\delta_t \Delta(t_1^{-\ti\delta_u\ti\delta_t})} \Big\}_{ y \in {\mc Z} }
  \]
  that is, we prove that $M = \la \bigcup_{\ti\sigma\in\ti\Phi} {\mc
  V}^{\ti\sigma} \ra^{F({\mc Z})}$ and $N$ coincide. We first note that
  \begin{eqnarray*}
   N \ni (t_{1,1}^{-1}\,t_{1,1}^{\gamma\chi})^{\delta_U\gamma}
    &=& t_{1,1}^{-\delta_U\gamma} \, t_{1,1}^{\gamma\ti\delta_U}
     =  t_{0,1}^{-\delta_T\delta_U\gamma} \, t_{1}^{\ti\delta_T\ti\delta_U}
     =  t_{0,1}^{-\delta_U\delta_T \cdot \Delta(t_{0,1}^{-1}) \gamma} \, t_{1}^{\ti\delta_T\ti\delta_U} \\
    &=& t_{0,1}^{-\delta_U\delta_T \gamma \cdot \Delta(t_{1}^{-1}) } \, t_{1}^{\ti\delta_T\ti\delta_U} 
     =  t_{1}^{-\ti\delta_U\ti\delta_T  \cdot \Delta(t_{1}^{-1}) } \, t_{1}^{\ti\delta_T\ti\delta_U} \in {\mc V}.
  \end{eqnarray*}
  Similar computations show that the elements in ${\mc V}$ appear a among
  the normal generators of $N$. Thus $M \subseteq N$. On the other hand,
  for $i>0$ and $j>0$, we have that
  \begin{eqnarray*}
     t_{i,j}^{\delta_U\gamma} 
     &=& (t_{0,1}^{\delta_U^{j-1}} \cdots t_{0,1})^{\delta_T^i\delta_U \gamma}\\
     &=& (t_{0,1}^{\delta_U^{j-1}} \cdots t_{0,1})^{\delta_T^{i-1}\delta_U\delta_T\cdot \Delta(t_{0,1}^{-1}) \gamma} \\
     &=& \ldots = (t_{0,1}^{\delta_U^{j-1}} \cdots t_{0,1})^{\delta_U\delta_T^{i} \cdot \Delta(t_{0,1}^{-\delta_T^{i-1}}) \Delta(t_{0,1}^{-\delta_T^{i-2}})\cdots  \Delta(t_{0,1}^{-1}) \gamma} \\
     &=& (t_{0,1}^{\delta_U^{j-1}} \cdots t_{0,1})^{\delta_U\delta_T^{i} \gamma \cdot \Delta(t_{1}^{-\ti\delta_T^{i-1}})  \Delta(t_{1}^{-\ti\delta_T^{i-2}})\cdots  \Delta(t_{1}^{-1}) } \\
     &=& (t_{0,1}^{\delta_U^{j}\delta_T^i\gamma} \cdots t_{0,1}^{\delta_U\delta_T^{i} \gamma})^{\Delta(t_{1}^{-\ti\delta_T^{i-1}})  \Delta(t_{1}^{-\ti\delta_T^{i-2}})\cdots  \Delta(t_{1}^{-1}) } \\
     &=& (t_{1}^{\ti\delta_U^{j-1}} \cdots t_{1})^{\ti\delta_U\ti\delta_T^{i} \cdot \Delta(t_{1}^{-\ti\delta_T^{i-1}}) \Delta(t_{1}^{-\ti\delta_T^{i-2}})\cdots  \Delta(t_{1}^{-1}) } \\
     &=& (t_{1}^{\ti\delta_U^{j-1}} \cdots t_{1})^{{\ti\delta_U\ti\delta_T}\Delta(t_{1}^{-1}) \ti\delta_T^{i-1}\cdot \Delta(t_1^{-\ti\delta_T^{i-2}})\cdots \Delta(t_{1}^{-1})} \\
     &\equiv& (t_{1}^{\ti\delta_U^{j-1}} \cdots t_{1})^{{\ti\delta_T\ti\delta_U}\ti\delta_T^{i-1}\cdot \Delta(t_1^{-\ti\delta_T^{i-2}})\cdots \Delta(t_{1}^{-1})}\mod M\\ 
     &\equiv& \ldots \equiv (t_{1}^{\ti\delta_U^{j-1}} \cdots t_{1})^{{\ti\delta_T^i\ti\delta_U}} \mod M 
  \end{eqnarray*}
  and $(t_{1}^{\ti\delta_U^{j-1}} \cdots
  t_{1})^{{\ti\delta_T^i\ti\delta_U}} =
  (t_{i,j})^{\gamma\ti\delta_U}$. Thus $(t_{i,j}^{-1}
  t_{i,j}^{\gamma\chi})^{\delta_U\gamma} \in M$. It follows analogously
  for the other normal generators of $N$ that these are contained
  in $M$. Thus $F({\mc Z}) / N$ is invariantly finitely $L$-presented and 
  so is our subgroup $H$.
\end{proof}
By~\cite[Theorem~6.1]{Har11b}, every finite index subgroup
$H$ of an invariantly finitely $L$-presented group $G =
\la\X\mid\Q\mid\Phi\mid\R\ra$ is invariantly finitely $L$-presented
whenever the substitutions $\sigma\in\Phi$ induce endomorphisms of the
subgroup $H$. This allows us to prove Theorem~\ref{thm:TorRk2} using the
results in Theorem~\ref{thm:Rank2} and Theorem~\ref{thm:Ben11}:
\def\0{ of Theorem~\ref{thm:TorRk2}}
\begin{CenProof}{\0}
  Let $G$ be a finitely presented group and let $H\unlhd G$ be a finitely
  generated normal subgroup so that $G / H$ is abelian with torsion-free rank at most
  two.  Since $G$ is finitely generated, $G / H$ is a finitely generated
  abelian group and so it decomposes into $G / H \cong \Z^\ell \times T$
  with torsion subgroup $T$ and torsion-free rank $\ell \leq 2$. Denote
  by $U \leq G$ the full preimage of the torsion subgroup $T$ in $G$.
  Then $G / U \cong \Z^\ell$ and $[U:H] < \infty$ hold. If $\ell = 0$
  holds, $H$ has finite index in $G$ and thus it is invariantly finitely
  $L$-presented by~\cite[Theorem~6.1]{Har11b}. If either $\ell = 1$ or
  $\ell = 2$ holds, the subgroup $U \unlhd G$ is invariantly finitely
  $L$-presented by Theorem~\ref{thm:Ben11} or Theorem~\ref{thm:Rank2}.
  Each substitution in the $L$-presentation of $U$ is induced by
  conjugation within the finitely presented group $G$. Since $H$ is a
  normal subgroup of $G$ each substitution of the finite $L$-presentation
  of $U$ stabilizes the subgroup $H$. Thus~\cite[Theorem~6.1]{Har11b}
  applies to the finite index subgroup $H \unlhd U$ and it shows that $H$
  is invariantly finitely $L$-presented.
\end{CenProof}
\noindent In the proof of Theorem~\ref{thm:Rank2}, it is essential that
the elements $g \in S \cap E\!K$ have a unique representation in the basis
$\{t^i s^j \cdot t_{0,1}\cdot s^{-j} t^{-i} \mid i,j\in\Z\}$. This allows
us to define the epimorphism $\gamma\colon F({\mc Y}) \to F({\mc Z})$
so that it maps conjugates by elements of the Schreier transversal to
images of automorphisms which are induced by conjugation with a Schreier
transversal. Since $S / S \cap E\!K$ is finitely presented, we can always
choose finitely many Schreier generators ${\mc W} \subseteq {\mc Y}$ so
that $S \cap E\!K$ is generated, as a normal subgroup, by ${\mc W}$. In
our proof of Theorem~\ref{thm:Rank2} the conjugates of these elements in
${\mc W}$ by elements of the Schreier transversal from a basis for the
subgroup $S \cap E\!K$.  This is no longer possible for $G / H \cong \Z
\times \Z \times \Z$:
\begin{remark}\Label{rem:ZxZxZ}
  Consider the notation from the proof of Theorem~\ref{thm:Rank2}.
  For $G / H \cong \Z \times \Z \times \Z$, we choose as Schreier
  transversal ${\mc T} = \{ r^i s^j t^k \mid i,j,k\in\Z\}$ and we obtain
  the Schreier generators:
  \begin{eqnarray*}
    a_{\ell,i,j,k} = \gamma(r^i s^j t^k, a_\ell ) &=& r^i s^j t^k a_{\ell} t^{-k} s^{-j} r^{-i},\\
    s_{i,j,k} = \gamma(r^i s^j t^k, s) &=& r^i s^j (t^k st^{-k} s^{-1}) s^{-j} r^{-i},\\
    r_{i,j,k} = \gamma(r^i s^j t^k, r) &=& r^i ( s^j t^k r t^{-k} s^{-j} r^{-1} ) r^{-i},\\
    t_{i,j,k} = \gamma(r^i s^j t^k, t) &=& 1,
  \end{eqnarray*}
  where $s_{i,j,k} = 1$ if and only if $k = 0$ while $r_{i,j,k} = 1$ if
  and only if $(j,k) = (0,0)$. Then
  \[
    E\!K = \left\la a_{\ell,i,j,k}, s_{i,j,o}, r_{i,p,q} \mid 1\leq\ell\leq m, i,j,k,o,p,q \in \Z, o\neq 0, (p,q) \neq (0,0) \right\ra
  \]
  is freely generated and so is
  \[
    S \cap E\!K = \la s_{i,j,o}, r_{i,p,q} \mid i,j,o,p,q\in\Z,
    o\neq 0, (p,q) \neq (0,0) \ra.
  \]
  Since $G / H \cong S / S \cap E\!K \cong \Z \times \Z \times \Z$ is 
  finitely presented, 
  the subgroup $S \cap E\!K$ is finitely generated as a normal subgroup 
  of $S$. In particular, we have that 
  \[
    S / S \cap E\!K = \la r,s,t \mid
    \underbrace{tst^{-1}s^{-1}}_{= s_{0,0,1}},
    \underbrace{trt^{-1}r^{-1}}_{= r_{0,0,1}},
    \underbrace{srs^{-1}r^{-1}}_{= r_{0,1,0}} \ra
  \]
  so that $S \cap E\!K = \la s_{0,0,1}, r_{0,0,1}, r_{0,1,0} \ra^S$ holds.
  The normal generators of $S \cap E\!K$ satisfy
  \[
    \begin{array}{rcl}
    r^i s^j t^k \cdot s_{0,0,1}\cdot t^{-k} s^{-j} r^{-i} 
    &=& s_{i,j,k+1}\cdot s_{i,j,k}^{-1},\\[0.5ex]
    r^i s^j t^k \cdot r_{0,0,1}\cdot t^{-k} s^{-j} r^{-i} 
    &=& r_{i,j,k+1}\cdot r_{i,j,k}^{-1},\\[0.5ex]
    r^i s^j t^k \cdot r_{0,1,0}\cdot t^{-k} s^{-j} r^{-i} 
    &=& s_{i,j,k}\cdot r_{i,j+1,k}\cdot s_{i+1,j,k}^{-1}\cdot r_{i,j,k}^{-1}.
    \end{array}
  \]
  It can be seen easily (e.g. using \Gap) that
  \[
    U = \{ s_{i,j,k+1}\,s_{i,j,k}^{-1},\:
           r_{i,j,k+1}\,r_{i,j,k}^{-1},\:
           s_{i,j,k}\,r_{i,j+1,k}\,s_{i+1,j,k}^{-1}\,r_{i,j,k}^{-1} \}_{i,j,k\in\Z}
  \]
  is not a basis for $S \cap E\!K$.  Therefore the ideas in the proof
  of Theorem~\ref{thm:Rank2} do not apply.
\end{remark}

\subsection*{Acknowledgments}
I am grateful to Laurent Bartholdi for valuable comments and
suggestions.

\def\cprime{$'$}

\noindent Ren\'e Hartung,
{\scshape Mathematisches Institut},
{\scshape Georg-August Universit\"at zu G\"ottingen},
{\scshape Bunsenstra\ss e 3--5},
{\scshape 37073 G\"ottingen},
{\scshape Germany}\\[1ex]
{\it Email:} \qquad \verb|rhartung@uni-math.gwdg.de|\\[2.ex]

\end{document}